\newtheorem{theorem}{Th\'eor\`eme}[section]
\newtheorem{proposition}[theorem]{Proposition}
\newtheorem{lemma}[theorem]{Lemme}
\newtheorem{Corollaire}[theorem]{Corollaire}
\newtheorem{definition}[theorem]{D\'efinition}
\newtheorem{example}[theorem]{Exemple}
\newtheorem{remarque}[theorem]{Remarque}
\newcommand{\vc}{\|\cdot\|}
\newcommand{\C}{\mathbb{C}}
\newcommand{\mc}{\mathcal{O}(1)}
\newcommand{\Z}{\mathbb{Z}}
\newcommand{\lra}{\longrightarrow}
\newcommand{\cl}{\mathcal{C}^\infty}
\newcommand{\al}{\alpha}
\newcommand{\A}{\mathcal{A}}
\newcommand{\R}{\mathbb{R}}
\newcommand{\p}{\mathbb{P}}
\newcommand{\eps}{\varepsilon}
\newcommand{\X}{\mathcal{X}}
\newcommand{\T}{\mathbb{T}}
\newcommand{\vf}{\varphi}
\newcommand{\si}{\sigma}
\newcommand{\Q}{\mathbb{Q}}
\newcommand{\N}{\mathbb{N}}
\newcommand{\la}{\lambda}
\newcommand{\ms}{\mathcal{S}}
\title{Hauteurs canoniques des sous-vari\'et\'es toriques }
\date{\today, \currenttime}
\author{Mounir Hajli\footnote{\small National Center for Theoretical Sciences, Taipei Office, National Taiwan University,
Taipei 106, Taiwan \quad\quad\quad \emph{E-mail}:\ttfamily{ hajlimounir@gmail.com}}}
\begin{document}

\maketitle

\begin{abstract}
On pr\'esente une formule explicite pour les hauteurs canoniques pour
une
classe de sous-vari\'et\'es toriques projectives
au  sens de Gelfand, Kapranov
et Zelevinsky. Notre approche donne une alternative partielle 
aux calculs  de \cite{PS}.
\end{abstract}

\section{Introduction}
La hauteur d'une vari\'et\'e arithm\'etique est un analogue arithm\'etique du degr\'e g\'eom\'etrique,
 mesurant la complexit\'e de la vari\'et\'e. Il est connu que le calcul explicite des hauteurs
  est une
 t\^ache tr\`es difficile et compliqu\'ee. Il y a peu d'exemples de calculs d'hauteur d'une vari\'et\'e.
  Le cas torique respr\'esente  une situation particuli\`ere et int\'eressante.
L'arithm\'etique  des vari\'et\'es toriques a \'et\'e  etudi\'ee
de mani\`ere intense par plusieurs
 auteurs. Dans \cite{PS}, Philippon et Sombra pr\'esentent
une expression explicite pour la hauteur normalis\'ee du translat\'e d'une vari\'et\'e torique projective d\'efinie sur
$\overline{\Q}$, cf. \cite[Th\'eor\`eme 0.1]{PS}. Cette expression se d\'ecompose comme somme de contributions
locales, chaque terme \'etant l'int\'egrale d'une certaine fonction concave et affine par morceaux, d\'efinie
sur le polytope $Q_\mathcal{A}$. Leur d\'emarche pour d\'emonter leur formule  s'appuie
sur le calcul d'une  fonction de Hilbert arithm\'etique appropri\'ee,  au lieu d'utiliser la d\'efinition de la
hauteur normalis\'ee. Dans \cite{Burgos2},  les auteurs \'etablissent une formule int\'egrale pour la hauteur
 d'une vari\'et\'e torique projective par rapport \`a un fibr\'e en droites \'equivariant muni d'une m\'etrique
 hermitienne, continue et invariante par l'action du tore compact de la vari\'et\'e (voir \cite[Theorem 5.1.4]{Burgos2}).
 Ils  d\'eduisent (voir \cite[Corollary 5.2.6]{Burgos2}) une nouvelle preuve  pour le r\'esultat de Philippon et Sombra \cite{PS}.

 Rappelons le r\'esultat majeur de 
\cite{PS}. Soit $\p^n$ l'espace projectif 
sur $\overline{\Q}$
 de dimension $n$. Soit $\A=\{a_0,a_1,\ldots,a_n\}$ une
famille  de vecteurs de $\Z^d$. Soit $L_\A$ le sous-module engendr\'e
par les diff\'erences des vecteurs $a_0,\ldots,a_n$. Soit 
$\beta\in (\overline{\Q}^\ast)^{n+1}$. On note par $X_{\A,\beta}$
la vari\'et\'e torique au sens de Gelfand, Kapranov et Zelevinsky
 associ\'ee \`a $\A$ (voir les rappels de la section 
 \eqref{x14}). Soit
$K$ un corps de nombres appropri\'e tel que 
$\beta\in (K^\ast)^{n+1}$. Philippon et Sombra 
associent \`a $X_{\A,\beta}$ une famille de fonctions $(\vartheta_{\A,\beta,v})$
index\'ee par $M_K$, l'ensemble des places du corps $K$, o\`u chaque
fonction $\vartheta_{\A,\beta,v}$ est donn\'ee comme suit
\[
\vartheta_{\A,\beta,v}:Q_\A\rightarrow \R,\quad x\mapsto 
\max\{ y\in \R|\, (x,y)\in Q_{\A,\beta,v}\},
\]
avec $Q_{\A,\beta,v}:=\mathrm{Conv}\bigl( (a_0,\log
|\beta_0|_v),\ldots,(a_n,\log|\beta_n|_v)\bigr)\subset \R^{d+1}$ et
$Q_{\A}:=\mathrm{Conv}(a_0,\ldots,a_n)\subset \R^d$. Notons que 
que la d\'etermination de $\vartheta_{\A,\beta,v}(x)$ pour 
$x\in Q_{\A}$ est un probl\`eme d'optimisation lin\'eaire. En effet, 
trouver $\vartheta_{\A,\beta,v}(x)$ est \'equivalent  \`a r\'esoudre  
le probl\`eme  suivant

\begin{equation}\label{x15}
\begin{cases}
\text{Minimiser}\, <\la,c> &\\
\text{sous les contraintes }B\la= b &\\
\text{et}\,\la\geq 0 &
\end{cases}
\end{equation}
o\`u $c=(-\log |\beta_0|_v,\ldots,-\log|\beta_n|_v)$,
$B$ est une matrice de taille $(d+1)\times n$ et $b$ un vecteur
de $\R^{d+1}$ qui s'\'ecrivent en fonction des $a_0,\ldots,a_n$, 
$(1,\ldots,1)$ et de $x$. La th\'eorie de l'optimisation
lin\'eaire affirme que si $\la^\ast$ est une solution optimale
de  \eqref{x15}, alors  $\la^\ast$ appartient 
au bord d'un polytope convexe  d\'efini par les contraintes du 
probl\`eme en question.  Il existe
plusieurs algorithmes de r\'esolution de \eqref{x15}, par exemple
la m\'ethode du Simplexe.  Philippon et Sombra dans \cite{PS} donnent
une formule pour $\widehat{h}(X_{\A,\beta})$, la 
hauteur normalis\'ee de $X_{\A,\beta}$ en fonction  des 
$\vartheta_{\A,\beta,v}$ pour $v\in M_K$. Plus pr\'ecis\'ement, ils
\'etablissent le r\'esultat suivant
\[
\widehat{h}(X_{\A,\beta})=(d+1)!\sum_{v\in M_K}\frac{[K_v:\Q_v]}{[K:\Q]}
\int_{Q_\A}\vartheta_{\A,\beta,v}dx_1\cdots dx_d.
\]
voir \cite[Th\'eor\`eme 3.6]{PS}.

Le r\'esultat de Philippon et Sombra s'applique donc
\`a toute sous-vari\'et\'e torique $X_{\A,\beta}$ 
translat\'ee par $\beta\in (\overline{\Q}^\ast)^{n+1}$ quelconque. En observant que les fonctions 
$\vartheta_{\A,\beta,v}$ sont affines  par morceaux, alors
on peut expliciter les int\'egrales dans la formule
de la hauteur en utilisant les techniques de la 
programmation 
lin\'eaire. Notons que la m\'ethode de \cite{PS} 
a \'et\'e  g\'en\'eralis\'ee aux fibr\'es en droites
hermitiens semipositifs, voir \cite{Burgos3} pour plus
 de d\'etails. \\

Dans cet article, nous proposons  une alternative partielle 
\`a l'approche de \cite{PS} pour le calcul des hauteurs
canoniques pour une classe de vari\'et\'es toriques projectives. Notre m\'ethode
s'appuie essentiellement sur la d\'efinition de la hauteur canonique, en 
plus elle fournit  un
moyen pour le calcul explicite de la hauteur.\\

Soient $K $  un corps de nombres et $\mathcal{O}_K$ son anneau des entiers. Soit $\mathcal{X}$ une vari\'et\'e
arithm\'etique (projective) de dimension  $n$ sur  $ \ms:=\mathrm{Spec}(\mathcal{O}_K)$ et note
par $\pi$ le morphisme structural. On renvoie \`a
\cite{AIT}, ainsi qu'\`a
\cite{BoGS} pour la construction des groupes de Chow arithm\'etiques. On rappelle (voir par exemple
\cite[\S 2.1.3]{BoGS}) que l'on dispose de deux morphismes:
\[
  \deg:\widehat{CH}^0(\ms)=CH^0(\ms)\lra \Z, \quad
 \widehat{\deg}:\widehat{CH}^1(\ms)\lra \R,
 \]
qui induisent par composition avec $\pi_\ast:\widehat{CH}^\ast(\X)\lra
\widehat{CH}^{\ast-n}(\ms)$ les morphismes:
\[
\begin{split}
  \deg:&{CH}^n(\X)\lra \Z,\quad \text{(degr\'e g\'eom\'etrique)},\\
  \widehat{\deg}&:\widehat{CH}^{n+1}(\X)\lra \R\quad \text{(degr\'e arithm\'etique)}.
\end{split}
\]
On consid\`ere $\p^n_{\mathcal{O}_K}=\mathrm{Proj}\bigl(\mathcal{O}_K[T_0,\ldots,T_n]\bigr)$
l'espace projectif de dimension $n$ sur $\mathrm{Spec}(\mathcal{O}_K)$. Soit $Z$ un cycle sur
$\mathbb{P}^n_{\mathcal{O}_K}$,  on note par
 $h_{\overline{\mathcal{O}(1)}_\infty}(Z)$ la  hauteur canonique de $Z$, o\`u $\overline{\mathcal{O}(1)}_\infty$ est  le fibr\'e
universel sur $\p^n_{\mathcal{O}_K}$  muni de sa m\'etrique canonique (voir \cite[Proposition-d\'efinition
5.5.1]{Maillot} ou \cite{Zhang}).
\subsection{R\'esultats et strat\'egie de la preuve}

Soit $\beta=(\beta_0,\ldots,\beta_n)\in
(\overline{\Q}^\ast)^{n+1}$.
 Soient
$d\in \N_{\geq 1}$ et
 $\A=\{a_1,\ldots,a_n \}$
  une sous-famille  de vecteurs de $\Z^d$ de cardinal $n$ tels que
 $L_{\A}:=\Z a_1+\cdots+\Z a_n=\Z^d$.
  On consid\`ere $X_{\A,\beta}$ la vari\'et\'e 
 torique associ\'ee (voir rappels de la section\eqref{x14}).  Soit $K$ un corps de nombres de d\'efinition de $X_{\A,\beta}$. La donn\'ee $\A$ d\'efinit $n-d$ polyn\^omes homog\`enes $R_1,\ldots,R_{n-d}$. Dans cet article, on
  suppose que 
 $X_{\A,\beta}$ satisfait les hypoth\`eses suivantes:  
 \begin{enumerate}
 \item Les polyn\^omes $R_1,\ldots,R_{n-d}$ d\'efinissent $X_{\A,\beta}$.
 \item $Z(R_k,R_{k+1},\ldots,R_{n-d})$, la vari\'et\'e projective 
 d\'efinie par $R_k,\ldots,R_{n-d}$, est int\`egre pour tout
 $k=1,\ldots,n-d$.
 \end{enumerate}
 En particulier, $X_{\A,\beta}$ est une intersection compl\`ete. Voir  \eqref{exemple2}
 pour des exemples. 
Pour simplifier, on dira que $X_{\A,\beta}$ satisfait l'hypoth\`ese
$ \mathscr{A}$, si $(1.)$ et $(2.)$ sont v\'erifi\'es.   Aussi, on suppose qu'il existe $F$, un corps de nombres
tel que son anneau des entiers $\mathcal{O}_F$ soit un anneau factoriel et 
que 
$\beta\in (F^\ast)^{n+1}$. Sous cette hypoth\`ese, on v\'erifie (voir Lemme \ref{x5})  que pour 
tout $i\in 
 \{1,\ldots,n-d \}$, il existe $\tau_{i}\in K$ tel que
 \[
 (R_i)\cap \mathcal{O}_K[T_0,\ldots,T_n]=(\tau_{i} R_i). \]
 
 Notons que  la condition
 $\mathscr{A}$ ne d\'epend pas de $\beta$ (voir Remarque 
 \ref{nedepenbeta}). \\

On note par $\X_{\A,\beta}$ la cl\^oture de Zariski de $X_{\A,\beta}$
dans $\p^n_{\mathcal{O}_K}$.
On se propose de donner une expression explicite pour 
$h_{\overline{\mc}_\infty}\bigl(\X_{\A,\beta}\bigr)$ en fonction de $\beta$ et de $\A$. La condition $\mathscr{A}$ combin\'ee
avec Lemme \ref{lemmemodele}  nous   
permet  de construire $\X_{\A_1,
 \beta}:=\X_{\A,\beta},\X_{\A_2,\beta},
 \ldots,\X_{\A_{n-d},\beta}$ des
sous-sch\'emas toriques int\`egres de $\p^n_{\mathcal{O}_K}$ avec

\begin{equation}\label{x11}
\X_{\A,\beta}=:\X_{\A_1, \beta}\subsetneq \X_{\A_2,\beta}\subsetneq \cdots\subsetneq \X_{\A_{n-d},\beta}\subsetneq 
\p^n_{\mathcal{O}_K},
\end{equation}
et tels que chaque $\X_{\A_i,\beta}$ soit une hypersurface dans 
$\X_{A_{i+1},\beta}$ donn\'ee par $s_i:=\tau_{i} R_{i}$ 
 pour tout  $i=1,\ldots,n-d-1$ et $X_{\A_{n-d},\beta}$ est une hypersurface de $\p^n_{\mathcal{O}_K}$ donn\'ee par $s_{n-d}=\tau_{n-d}R_{n-d}$, o\`u  $d$ 
 est la dimension de $\X_{\A,\beta}$.

Par la formule de Faltings (voir \cite[Th\'eor\`eme 5.5.6]{Maillot})  on a pour tout $j=0,1,\ldots,n-d-1$

\begin{equation}\label{x10}
\begin{split}
h_{{}_{\overline{\mc}_\infty}}\bigl(\X_{\A_j,\beta}\bigr)&=
\deg(R_{
j})h_{{}_{\overline{\mc}_\infty}}\bigl(\X_{\A_{j+1
},\beta}\bigr)+\sum_{\si:K\rightarrow \C}\int_{\X_{\A_{j+1},\beta}(\C)}\log\bigl\|s_j\bigr\|_{\si,\infty}c_1\bigl(
\overline{\mc}_\infty\bigr)^{j+d},\\
h_{{}_{\overline{\mc}_\infty}}\bigl(\X_{\A_{n-d},\beta}\bigr)&=
\deg(R_{n
-d})h_{{}_{\overline{\mc}_\infty}}\bigl(\p^n_{\mathcal{O}_K}\bigr)+\sum_{\si:K\rightarrow \C}\int_{\p^n(\C)}\log\bigl
\|s_j\|_{\si,\infty} c_1\bigl(
\overline{\mc}_\infty\bigr)^{n}
\end{split}
\end{equation}

Sachant que
$h_{\overline{\mc}_\infty}\bigl(\X_{\A_{n-d},\beta}
\bigr)=h_{\overline{\mc}_\infty}\bigl(\p^n_{\mathcal{O}_K}
\bigr)=0$ (voir \cite[Proposition 7.1]
{Maillot}).
Alors, d'apr\`es la formule pr\'ec\'edente, le calcul de 
$h_{\overline{\mc}_\infty}\bigl(\X_{\A_j,\beta}\bigr)$ se d\'eduira 
 de celui de $h_{\overline{\mc}_\infty}\bigl(\X_{\A_{j+1},\beta}\bigr)$ si 
 l'on d\'etermine
l'expression de \[
                 c_1\bigl(
\overline{\mc}_\infty\bigr)^{j+d+1}_{|_{\X_{\A_{j+1},\beta}}}\quad  \forall j\in \N,
                \]
 C'est l'objet du th\'eor\`eme ci-dessous. Si 
 $\A'=\{a_1',\ldots,a_n'\}$
 est une famille de vecteurs de $\Z^{d+1}$ qui engendre  $\Z^{d+1}$, nous
montrons qu'il existe $S$ un ensemble fini  de points de $\R^{d+1}$ et un ensemble
$\bigl(X_{\A'_{I_s},1}\bigr)_{s\in
S}$ de vari\'et\'es toriques d\'etermin\'es explicitement tels que:

\begin{theorem}[voir Th\'eor\`eme \ref{courant}]
Sur $\T^{d+1}(\C)$, on a l'\'egalit\'e de courants suivante:
\[
\omega_{\beta}^{d+1}:=(\ast_{\A',\beta})^\ast\Bigl( c_1\bigl(
\overline{\mc}_\infty\bigr)^{j+d+1}_{|_{\X_{\A_{j+1},\beta}}}\Bigr)=\bigl(dd^c\log \max(|\beta\cdot t^{a'}|) \bigr)^{d+1}=\sum_{s\in S}
\deg(X_{\A'_{I_s},1})
\delta_{{}_{\mathbf{S}_s}},
\]
avec $\beta=(1,\beta_1,\ldots,\beta_n)\in (\overline{\Q}^\ast)^{n+1}$ 
et $|\beta\cdot t^{a'}|:=\bigl(1,|\beta_1 t^{a_1'}|,\ldots,|\beta_n t^{a_n'}|\bigr)$ pour
tout $t\in \T^{d+1}(\C)$ et $\delta_{\mathbf{S}_s} $ est le courant int\'egration sur  $\mathbf{S}_s$ (voir
\eqref{cerlce}).
\end{theorem}

\begin{remarque} 
\rm{Notons que la preuve du  th\'eor\`eme ci-dessus 
 permet de trouver explicitement 
l'ensemble $S$ et les
 coefficients $\deg(X_{\A'_{I_s},1})$.}
\end{remarque}

En combinant l'\'equation \ref{x10} et le th\'eor\`eme pr\'ec\'edent 
 nous pouvons  donc 
 d\'eterminer  par r\'ecurrence  
la hauteur canonique des vari\'et\'es toriques. Plus concr\`etement, on a
le r\'esultat suivant

\begin{theorem}[voir Th\'eor\`eme \ref{laformulehot}]Soit $n\in \N^\ast$ et $d\in 
\{1,\ldots,n-1\}$. Soit $\A:=\{a_1,\ldots,a_n\}$ une
sous-famille de $\Z^d$ de rang $d$. Soit $\beta\in 
(\overline{\Q}^\ast)^{n+1}$. On suppose que $X_{\A,\beta}$  v\'erifie l'hypoth\`ese 
$\mathscr{A}$ et que $\beta\in (F^\ast)^{n+1}$ o\`u $F$ est un corps 
de nombres tel que son anneau des entiers est factoriel. On a,  
\begin{itemize}
\item Si $d\leq n-1$. On note $\A':=\A_2$ (voir \ref{x3})  alors
il  existe $\tau\in K$
tel que
\begin{align*}
h_{\overline{\mc}_\infty}\bigl(\X_{\A,
\beta}\bigr)=&\deg(R_{1})\,h_{\overline{\mc}_\infty}\bigl(\X_{\A',
\beta}\bigr)+\deg(X_{\A',1})\log|N_K(\tau)|\\
&+\sum_{\si:K\rightarrow \C}\sum_{s_\si\in S_\si}
\deg(X_{\A'_{I_{s_\si}},1}) \int_{t\in\T^{d+1}(\C)}\log \frac{|Q_{1}(\beta
\cdot t^{a'}) |_\si}{\max(|\beta\cdot t^{a'} |_\si 
)^{\deg(R_1)} }
\delta_{\mathbf{S}_{s_\si}}.
\end{align*}
\item  Si $d=n-1$, alors il existe aussi $\tau\in K$ tel que
\begin{align*}
h_{\overline{\mc}_\infty}\bigl(\X_{\A,
\beta}\bigr)=\log|N_K(\tau)|
+\sum_{\si:K\rightarrow \C} \int_{t\in (\mathbb{S}^1)^{n}}\log |Q_{1}(t_1,\ldots,t_n
 ) |_\si\frac{dt_1\wedge dt_n}{t_1\cdots t_n}.
\end{align*}
\end{itemize}
\end{theorem}

\begin{remarque}\rm{
Notons que le terme int\'egrale dans notre formule peut \^etre 
simplifier (voir Remarque \ref{x7}). Plus 
 concr\`etement, on donne pour tout $j=1,\ldots,n-d-1$ une formule 
pour 
\[
h_{\overline{\mc}_\infty}\bigl(\X_{\A_j,
\beta}\bigr)-\deg(R_{j})h_{\overline{\mc}_\infty}\bigl(\X_{\A_{j+1},
\beta}\bigr)
\]
et 
\[
h_{\overline{\mc}_\infty}\bigl(\X_{\A_{n-d},
\beta}\bigr),
\]
en fonction de $\beta$, $[K:\Q]$, $\tau_1,\ldots,\tau_{n-d}$,  
$w_{1},\ldots,w_{n-d}$ et les \'el\'ements de $\A_1,\ldots,\A_{n-d}$ (voir \ref{x18} et
\ref{ee1}).  Ainsi, on dispose d'une formule
pour $h_{\overline{\mc}_\infty}\bigl(\X_{\A,
\beta}\bigr)$ en fonction de  $\beta$, $\deg(R_1),\ldots,
\deg(R_{n-d})$, $[K:Q]$, $\tau_1,\ldots,
\tau_{n-d}$,  
$w_1,\ldots, w_{n-d}$ et les \'el\'ements de $\A_{1},\ldots,
\A_{n-d}$. }
\end{remarque}
 
\begin{Corollaire}(voir Corollaire \eqref{x6}) Soit 
$\beta=(1,\beta_1,\ldots,\beta_n)\in (\overline{\Q}^\ast)^{n+1}$. 
En gardant les m\^emes hypoth\`eses que 
dans \eqref{laformulehot}, alors
il existe $u_{\A}\in \N^{n-d}$ et $(v_{\A,\si,i})_{
\substack{\si:K\rightarrow \C\\
i=1,\ldots,n}}$ une sous famille de $\Q^n$ tels que
\[
h_{\overline{\mc}_\infty}\bigl(\X_{\A,
\beta}\bigr)=\sum_{i=1}^{n-d}u_{\A,i}\log|N_K(\tau_i)|+                                                                                                                                                                                                                                                                                                                                                                                                                                                                                                                                                                                                                                                                                                                                                                                                                                                                                                                                                                                                                                                                                                                                                                                                                                                                                                                                                                                                                                                                                                                                                                                                                                                                                                                                                                                                                                                                                                                                                                                                                                                                                                                                                                 
\sum_{\substack{\si:K\rightarrow \C\\i=1,\ldots,n}}v_{\A,\si,i}\log
|\beta_i|_\si.
\]
On a,
\[
h_{\overline{\mc}_\infty}\bigl(\X_{\A,
\beta}\bigr)\in \log\bigl(\overline{\Q}\cap \R_{>0} \bigr).
\]
Donc, si $h_{\overline{\mc}_\infty}\bigl(\X_{\A, \beta}\bigr)\neq 0$,  alors c'est un nombre transcendant.

\end{Corollaire}

Dans Th\'eor\`eme \eqref{xample} on donne un exemple de 
calcul d'hauteur canonique. On consid\`ere la courbe  torique 
$\X_{\A,c}\subset \p^3_{\Q}$ o\`u $\A=\{1,-1,3 \}$ et 
$c=(1,c_1,c_2,c_3)\in (\Q^\ast)^{4}$. On v\'erifiera que cette vari\'et\'e
 satisfait l'hypot\`ese $\mathscr{A}$ et   que ici
 $F=\Q$ dont  l'anneau des entiers est bien \'evidemment factoriel.  En particulier, si $c_1,c_2,c_3\in \Z$ avec 
 $\gcd(c_i,c_j)=1$ pour tout $i\neq j\in \{1,2,3\}$ alors, on a
 \[
 h_{{}_{\overline{\mathcal{O}(1)}_\infty}}(\X_{\A,c})=2\log
 |c_1c_2|+\log\max(|c_1^2|,|c_2c_3|).
 \]

On peut se demander si notre approche peut \^etre g\'en\'eraliser
\`a des vari\'et\'es toriques qui ne v\'erifient pas n\'ecessairement  les hypoth\`eses pr\'ec\'edentes. On propose
alors une g\'en\'eralisation partielle qui consiste \`a \'etudier
la hauteur canonique  des vari\'et\'es toriques qui sont image
d'une vari\'et\'e $X_{A,\beta}$ satisfaisant les hypoth\`eses 
pr\'ec\'edentes,
par un morphisme monomial $\p^n\rightarrow \p^N$ (voir Remarque
 \eqref{x300}). \\

\noindent{\textbf{Remerciements}}: Ce travail a \'et\'e r\'ealis\'e durant ma th\`ese sous la direction de Vincent Maillot, je tiens \`a le remercier  pour ses conseils  lors
de la r\'edaction de cet article. Je remercie Antoine Chambert-Loir pour m'avoir signaler une erreur dans une version 
ant\'erieure.  Je remercie vivement   le referee 
pour ses remarques et corrections pertinentes.

\tableofcontents

\section{La Construction de Gelfand, Kapranov et Zelevinsky}\label{x14}

On note par $\overline{\Q}$ la cl\^oture alg\'ebrique de $\Q$, le corps des nombres rationnels.\\

Soit $\mathbb{T}^d=(\overline{\mathbb{Q}}^\ast)^d$ le tore alg\'ebrique et $\mathbb{P}^n(\overline{\Q})$ l'espace projectif sur $\overline{\mathbb{Q}}$, de
dimension $d$ et $n$ respectivement. Soit $\mathcal{A}=\{a_0,\ldots,a_n\}$ une suite de $n+1$ vecteurs de $\mathbb{Z}^d$. L'ensemble
 $\A$ d\'efinit une
action  de $\mathbb{T}^d$ sur $\mathbb{P}^n(\overline{\Q})$ comme suit:
\begin{align*}
\ast_\mathcal{A}:\mathbb{T}^d\times \mathbb{P}^n(\overline{\Q})&\longrightarrow \mathbb{P}^n(\overline{\Q})\\
(s,x)&\longrightarrow \bigl[s^{a_0}x_0:\cdots:s^{a_n}x_n\bigr],
\end{align*}
o\`u $s^{a_i}:=\prod_{j=1}^ds^{a_{ij}}$ pour tout $i$.

Soit $X_{\mathcal{A},\alpha}$ l'adh\'erence de Zariski de l'image de l'application monomiale:

\begin{equation}\label{tor}
\ast_{\mathcal{A},\alpha}:=\ast_\mathcal{A}|_{\alpha}:\mathbb{T}^d\longrightarrow \mathbb{P}^n(\overline{\Q}), \quad s\rightarrow
[\alpha_0s^{a_0}:\cdots:\alpha_ns^{a_n}]
\end{equation}

C'est une vari\'et\'e torique projective au sens de Gelfand, Kapranov et Zelevinsky \cite{GKZ},
c'est \`a dire une sous-vari\'et\'e de $\mathbb{P}^n$ stable par rapport \`a l'action de $\mathbb{T}^d$,
avec une
orbite dense $X_{\mathcal{A},\alpha}^\circ:=\mathbb{T}^d\ast_\mathcal{A}\alpha$.\\

Dans la suite on suppose que $a_0=0$ (puisque
$[\alpha_0s^{a_0}:\cdots:\alpha_ns^{a_n}]=[\alpha_0:\al_1
s^{a_1-a_0}:\cdots:\alpha_ns^{a_n-a_0}]$).

\begin{proposition}\label{affine}
La vari\'et\'e $X_{\A,1}$ d\'epend uniquement de la g\'eom\'etrie affine de l'ensemble $\A$. En d'autres termes, soit $\A\subset \Z^d$, $\mathcal{B}\subset \Z^e$ et
$T:\Z^d\mapsto \Z^e$ une application affine injective et telle que $T(\A)=\mathcal{B}$. Alors $X_{\A,1}$ s'identifie naturellement \`a $X_{\mathcal{B},1}$.
\end{proposition}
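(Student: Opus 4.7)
The plan is to decompose the affine map $T$ into its linear and translation parts, and to exploit the fact that in projective coordinates the translation becomes a common factor that cancels, while the linear part translates into pre-composition with a dual torus morphism whose image one controls via characters.

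Write $T(x)=L(x)+c$ with $L:\Z^d\to\Z^e$ linear and $c=T(0)\in\Z^e$, and set $b_i:=T(a_i)=L(a_i)+c$, so that $\mathcal{B}=\{b_0,\ldots,b_n\}$. For every $t\in\T^e$ one has
$[t^{b_0}:\cdots:t^{b_n}]=[t^{c}t^{L(a_0)}:\cdots:t^{c}t^{L(a_n)}]=[t^{L(a_0)}:\cdots:t^{L(a_n)}]$,
since $t^{c}\in\overline{\Q}^{\times}$ is a common non-zero factor. Moreover, characters of $\T^d$ are identified with $\Z^d$ via $a\mapsto (s\mapsto s^a)$, so the linear map $L$ is dual to a morphism of algebraic tori $L^{\vee}:\T^e\to\T^d$ characterised by the identity $\bigl(L^{\vee}(t)\bigr)^a=t^{L(a)}$ for all $a\in\Z^d$ and $t\in\T^e$. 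Combining the two observations gives $\ast_{\mathcal{B},1}(t)=\ast_{\A,1}\bigl(L^{\vee}(t)\bigr)$, hence $X_{\mathcal{B},1}=\overline{\ast_{\A,1}\bigl(\mathrm{Im}(L^{\vee})\bigr)}$ in $\p^n$.

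The crux is then the density of $\mathrm{Im}(L^{\vee})$ in $\T^d$. Injectivity of the affine map $T$ forces injectivity of its linear part $L$, since $L(x)=T(x)-T(0)$. A character $\chi_a$ of $\T^d$ vanishes on $\mathrm{Im}(L^{\vee})$ if and only if $t^{L(a)}=1$ for every $t\in\T^e$, i.e. $L(a)=0$, i.e. $a=0$; thus no non-trivial character of $\T^d$ vanishes on the image, and $\mathrm{Im}(L^{\vee})$ is Zariski-dense in $\T^d$. By Zariski-continuity of $\ast_{\A,1}$ we conclude $X_{\mathcal{B},1}=\overline{\ast_{\A,1}(\T^d)}=X_{\A,1}$ as subvarieties of $\p^n(\overline{\Q})$, which is the natural identification claimed. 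The character-theoretic density step is the only point with any real content; everything else is direct computation in projective coordinates.
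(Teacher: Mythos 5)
Your argument is correct. Note first that the paper does not actually prove this proposition: its ``proof'' is a one-line citation to \cite[proposition 1.2]{GKZ}, so there is no internal argument to compare yours against; what you have written is a complete, self-contained substitute, essentially the standard duality argument one would expect to find in the cited source. The two ingredients are both sound: the translation part $c=T(0)$ contributes the global factor $t^{c}\in\overline{\Q}^{\times}$ which disappears in homogeneous coordinates, and the linear part $L$ dualises to $L^{\vee}:\T^{e}\to\T^{d}$ with $\ast_{\mathcal{B},1}=\ast_{\A,1}\circ L^{\vee}$. For the density step, injectivity of $T$ does give injectivity of $L=T(\cdot)-T(0)$, and the only characters of $\T^{d}$ that are \emph{trivial} (you write ``vanish'', which is a slight terminological slip --- a character never vanishes) on $\mathrm{Im}(L^{\vee})$ are the $\chi_{a}$ with $L(a)=0$, hence $a=0$; by the duality between closed subgroups of a torus and quotients of its character lattice, the Zariski closure of $\mathrm{Im}(L^{\vee})$ is then all of $\T^{d}$ (in fact by Chevalley the image is already closed, so $L^{\vee}$ is surjective). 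The final step $\overline{f(S)}=\overline{f(\overline{S})}$ for a morphism $f$ and a dense subset $S$ is standard. The only implicit convention worth making explicit is that $T(\A)=\mathcal{B}$ is meant as an equality of ordered sequences, $b_{i}=T(a_{i})$, so that both parametrisations land in the same $\p^{n}$ with matching coordinates; this is clearly the paper's intent.
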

\begin{proof}
 Voir \cite[proposition 1.2, p. 167]{GKZ}.
\end{proof}

\begin{example}
 Dans $\p^2(\overline{\Q})$, on consid\`ere la sous-vari\'et\'e d\'efinie par le polyn\^ome homog\`ene suivant:
\[
 P(x,y,z)=y^2-zx
\]
C'est une vari\'et\'e torique au sens de Gelfand, Kapranov et Zelevinsky. En effet c'est l'adh\'erence de Zariski de l'image du morphisme suivant:
\begin{align*}
\mathbb{T}^1&\longrightarrow \mathbb{P}^2(\overline{\Q})\\
 s&\lra
[1:s:s^2].
\end{align*}

\end{example}

Soient $n\in \N_{\geq 2}$ et $d\in \{1,\ldots,n-1\}$. 
On pose $A$, la matrice d'ordre $n\times d $ suivante:
 \[
  A=\left(
     \raisebox{0.5\depth}{%
       \xymatrixcolsep{1ex}%
       \xymatrixrowsep{1ex}%
       \xymatrix{
         a_1 \\
         a_2 \\
\vdots\\
a_n
       }%
     }
   \right).
 \]
D'apr\`es \eqref{affine}, on peut supposer qu'elle est de rang $d$.
On pose $\A:=\{a_1,\ldots,a_n\}$. C'est une sous-famille de vecteurs de $\Z^d$,
qu'on fixe dans la suite.  On suppose que $L_{\A}=\Z^d$.

\begin{lemma}\label{irreductible}
 Soit $\nu=(\nu_1,\ldots,\nu_n)\in \bigl(\Z\setminus\{0\}\bigr)^{n} $, alors
\[
T:=x^\nu-1,
\]
est irr\'eductible dans $\Z\bigl[x_1,x_1^{-1},\ldots,x_n,x^{-1}_n\bigr]$ si et seulement si $\nu_1,\ldots,\nu_n$ sont premiers entre eux.
\end{lemma}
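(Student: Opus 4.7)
Mon approche consiste \`a travailler dans l'anneau des polyn\^omes de Laurent $R := \Z[x_1^{\pm 1}, \ldots, x_n^{\pm 1}]$, qui est un anneau factoriel (localisation du factoriel $\Z[x_1,\ldots,x_n]$) dont le groupe des unit\'es est exactement $\{\pm x^m : m\in \Z^n\}$. Je poserais $d := \gcd(\nu_1,\ldots,\nu_n)$ et traiterais les deux implications s\'epar\'ement.

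Pour le sens direct (irr\'eductibilit\'e entra\^\i ne coprimalit\'e), je raisonnerais par contrapos\'ee. Si $d\geq 2$, alors en \'ecrivant $\nu = d\mu$ avec $\mu\in \Z^n\setminus\{0\}$, l'identit\'e $y^d - 1 = (y-1)(y^{d-1}+\cdots+1)$ appliqu\'ee avec $y = x^\mu$ fournit une factorisation de $T$ dans $R$. Chacun des deux facteurs comporte au moins deux mon\^omes d'exposants distincts (car $\mu\neq 0$), et n'est donc pas un \'el\'ement de la forme $\pm x^m$ : aucun des facteurs n'est inversible dans $R$, ce qui prouve la r\'eductibilit\'e de $T$.

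Pour le sens r\'eciproque, je supposerais $\gcd(\nu_1,\ldots,\nu_n) = 1$. L'id\'ee cl\'e consiste \`a ramener $T$ \`a un polyn\^ome d'une seule variable via un changement de variables mon\^omial. J'utiliserais le r\'esultat classique d'alg\`ebre lin\'eaire sur $\Z$ selon lequel tout vecteur primitif de $\Z^n$ se compl\`ete en une base, ce qui \'equivaut \`a l'existence d'une matrice $M = (m_{ij})\in GL_n(\Z)$ telle que $M^T\nu = e_1$. Une telle matrice induit un automorphisme d'anneau $\Phi$ de $R$ d\'efini par $\Phi(x_i) = \prod_j x_j^{m_{ij}}$ (l'inverse est fourni par $M^{-1}\in GL_n(\Z)$), sous lequel un calcul direct donne $\Phi(x^\nu) = \prod_j x_j^{(M^T\nu)_j} = x_1$. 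Ainsi $T$ est irr\'eductible dans $R$ si et seulement si $x_1 - 1$ l'est ; et cette derni\`ere propri\'et\'e d\'ecoule du fait que $R/(x_1-1) \cong \Z[x_2^{\pm 1},\ldots,x_n^{\pm 1}]$ est int\`egre, de sorte que $x_1 - 1$ est premier dans $R$, a fortiori irr\'eductible.

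Le principal obstacle est le passage du cas multivari\'e au cas univari\'e dans la partie suffisance : il repose sur l'existence de la matrice $M\in GL_n(\Z)$ (compl\'etion d'un vecteur primitif en base de $\Z^n$, via la forme normale de Hermite ou de Smith) et sur la v\'erification soigneuse que l'application $\Phi$ associ\'ee est bien un automorphisme d'anneau de $R$ ramenant effectivement $x^\nu$ \`a la variable $x_1$.
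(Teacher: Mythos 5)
Votre d\'emonstration est correcte, mais elle suit une route r\'eellement diff\'erente de celle de l'article. Pour le sens direct, les deux textes utilisent la m\^eme factorisation $y^d-1=(y-1)(y^{d-1}+\cdots+1)$ avec $y=x^{\nu/d}$; vous ajoutez \`a juste titre la v\'erification que les deux facteurs ne sont pas des unit\'es de $\Z\bigl[x_1^{\pm1},\ldots,x_n^{\pm1}\bigr]$ (les unit\'es \'etant les $\pm x^m$), point que l'article passe sous silence. Pour la r\'eciproque, l'article part d'une relation de B\'ezout $\sum_i p_i\nu_i=1$ et encha\^{\i}ne des substitutions monomiales dont la premi\`ere, $x_i=y_i^{p_i}$, n'est pas inversible; il se ram\`ene \`a un polyn\^ome de Laurent de degr\'e $1$ en $z_1$, puis m\`ene une analyse assez lourde des supports des facteurs $P_1$, $Q_1$ pour conclure que l'un d'eux est un mon\^ome. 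Vous remplacez tout cela par l'observation que $\nu$, \'etant primitif, se compl\`ete en une base de $\Z^n$, d'o\`u une matrice $M\in GL_n(\Z)$ et un automorphisme monomial $\Phi$ de l'anneau de Laurent envoyant $x^\nu$ sur $x_1$, donc $T$ sur $x_1-1$, dont la primalit\'e est imm\'ediate puisque le quotient $\Z\bigl[x_2^{\pm1},\ldots,x_n^{\pm1}\bigr]$ est int\`egre. Votre r\'eduction est plus propre et plus courte: elle \'evite les substitutions non inversibles et la combinatoire des supports, au prix du seul fait standard (forme normale de Smith ou de Hermite) qu'un vecteur primitif est la premi\`ere ligne d'une matrice de $GL_n(\Z)$; l'approche de l'article n'invoque que l'identit\'e de B\'ezout, mais sa mise en \oe uvre est nettement plus d\'elicate.
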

\begin{proof}
Soit $\nu_1,\ldots,\nu_n$ $n$ entiers non tous nuls. On consid\`ere le polyn\^ome de Laurent $T$
suivant:
\begin{equation}
 T\bigl(x_1,\ldots,x_n\bigr)=x_1^{\nu_1}\cdots x_n^{\nu_n}-1.
\end{equation}

Si l'on note par $d$ le plus grand diviseur commun des $\nu_1,\ldots,\nu_n$,  alors $T$ est
r\'eductible,  si $d> 1$. En effet, on a $T= \Bigl(x_1^{\frac{\nu_1}{d}}\cdots
x_n^{\frac{\nu_n}{d}}\Bigr)^d-1$ qui est  r\'eductible dans
$\Z\bigl[x_1,x_1^{-1},\ldots,x_n,x_n^{-1}\bigr]$.\\

S'il existe $p_1,\ldots,p_n$ $n$ entiers tels que $p_1\nu_1+\ldots+p_n\nu_n=1$, on va montrer que
$T$ est irr\'eductible. Supposons  qu'il existe $P$ et $Q$ deux polyn\^omes de Laurent dans
$\Z\bigl[x_1,x_1^{-1},\ldots,x_n,x_n^{-1}\bigr]$ tels que:
\begin{equation}\label{irreductible1}
 T\bigl(x_1,\ldots,x_n\bigr)=P\bigl(x_1,\ldots,x_n\bigr) Q\bigl(x_1,\ldots,x_n\bigr).
\end{equation}

On consid\`ere l'ensemble suivant $E:=\bigl\{1\leq i\leq n\,\bigl|\; p_i\neq 0\bigr\}$ et soit $l$ son cardinal. Quitte \`a r\'eordonner les indices, on peut supposer que $E=\{1,2,\ldots,l \}$. On pose:
\begin{equation*}
 \left\{
\begin{array}{rl}
x_i=y_i^p & \text{si } 1\leq i\leq l\\
x_i=y_i & \text{si } l+1\leq i\leq n.\\
\end{array} \right.
\end{equation*}
En rempla\c{c}ant dans   \eqref{irreductible1}, on a  dans $\Z\bigl[y_1,y_1^{-1},\ldots,y_n,y_n^{-1}\bigr]$:
\[\begin{split}
y_1^{p_1\nu_1}\cdots y_l^{p_l\nu_l} y_{l+1}^{\nu_{l+1}}\cdots y_{n}^{\nu_n}-1=
P\bigl(y_1^{p_1},\ldots,y_l^{p_l},y_{l+1},\ldots,y_n \bigr)Q\bigl(y_1^{p_1},\ldots,y_l^{p_l},y_{l+1},\ldots,y_n \bigr).
\end{split}
\]

Comme $ p_1\nu_1+\ldots+p_n\nu_n=1$, l'\'egalit\'e ci-dessus devient:
\begin{equation}\label{eq}
\begin{split}
y_1 \Bigl({\frac{y_2}{y_1}}\Bigr)^{{}^{p_2\nu_2}}\cdots \Bigl({\frac{y_2}{y_1}}\Bigr)^{{}^{p_l\nu_l}}y_l^{{}^{p_l\nu_l}} y_{l+1}^{{}^{\nu_{l+1}}}\cdots y_{n}^{{}^{\nu_n}}-1=
P\bigl(y_1^{p_1},\ldots,y_l^{p_l},y_{l+1},\ldots,y_n \bigr) Q\bigl(y_1^{p_1},\ldots,y_l^{p_l},y_{l+1},\ldots,y_n \bigr).
\end{split}
\end{equation}

En posant dans \eqref{eq},
\begin{equation*}
 \left\{
\begin{array}{rl}
z_1=y_1 & \\
z_i=\frac{y_i}{y_1} & \text{si }\; 2\leq i\leq l\\
z_i=y_i & \text{si }\; i \geq l+1,
\end{array} \right.
\end{equation*}
on obtient, dans $\Z\bigl[z_1,z_1^{-1},\ldots,z_n,z_n^{-1}\bigr]$,:
\begin{equation}\label{eq11}
 z_1 z_2^{p_2\nu_2}\cdots z_l^{p_l\nu_l} z_{l+1}^{\nu_{l+1}}\cdots z_n^{\nu_n}-1=\widetilde{P}
\widetilde{Q},
\end{equation}
o\`uon a pos\'e $\widetilde{P}=P\bigl(z_1,(z_1z_2)^{p_2}\ldots,(z_1z_l)^{p_l},z_{l+1},\ldots,z_n
\bigr)$ et $\widetilde{Q}=Q\bigl(z_1,(z_1z_2)^{p_2}\ldots,(z_1z_l)^{p_l}$ $,z_{l+1},\ldots,z_n
\bigr)$.\\

Quitte \`a remplacer $z_i$ par $z_i^{-1}$ pour $1\leq i\leq n$, on peut supposer que  $z_1
z_2^{p_2\nu_2}\cdots z_l^{p_l\nu_l} z_{l+1}^{\nu_{l+1}}\cdots z_n^{\nu_n}-1 $ est un polyn\^ome de
$\Z\bigl[z_1,\ldots,z_n\bigr]$. Il existe $\widetilde{\nu}\in\N^n$ et $\widetilde{\mu}\in \N^n$
tels que
\[
\widetilde{P}=\frac{P_1}{z^{\widetilde{\nu}}}\quad\text{et}\quad \widetilde{Q}=\frac{Q_1}{z^{\widetilde{\mu}}},
\]
avec $P_1=\sum_{\nu\in \N^n}a_\nu z^\nu $ et $Q_1=\sum_{\nu\in \N^n}b_\nu z^\nu$ sont deux \'el\'ements de $\Z\bigl[z_1,z_2,\ldots,z_n \bigr]$ et tels que
\[
z_1 z_2^{p_2\nu_2}\cdots z_l^{p_l\nu_l} z_{l+1}^{\nu_{l+1}}\cdots z_n^{\nu_n}-1 =\frac{P_1}{z^{\widetilde{\nu}}}\frac{Q_1}{z^{\widetilde{\mu}}}.
\]
Si l'on note par $S(P)$ (resp. $S(Q)$) l'ensemble des $\nu\in \N^n$ tels que $a_\nu\neq 0$ (resp.
$b_{\nu}\neq 0$) alors, on voit que:
\begin{equation}\label{gtrtar}
\nu+\nu'\geq \widetilde{\nu}+\widetilde{\mu}\quad \forall\,\, \nu\in S(P),\,\forall\,\, \nu'\in
S(Q).
\end{equation}

On peut supposer que
\[
\widetilde{\nu}_i+\widetilde{\mu}_i\neq 0\quad\,\forall\, \, 1\leq i\leq n.
\]
Donc, il existe $i_0$ et $\nu^{(0)}\in S(P)$ et $\nu'^{(0)}\in S(Q)$ tels que
\[
\inf_{\substack{0\leq i\leq n \\
\nu\in S(P),\, \nu'\in S(Q)}}\bigl(\nu_i+\nu'_i \bigr)=\nu^{(0)}_{i_0}+\nu'^{(0)}_{i_0}\neq 0.
\]
Supposons, par exemple que $\nu^{(0)}_{i_0}\geq \widetilde{\nu}_{i_0}$ (ce qui est possible d'apr\`es
 \eqref{gtrtar}). On \'ecrit alors
\[
\frac{P_1}{z^{\widetilde{\nu}}}\frac{Q_1}{z^{\widetilde{\mu}}}=\biggl(\frac{P_1}{z_{i_0}^{\widetilde{\nu}_{i_0}+\widetilde{\mu}_{i_0}-\min(\widetilde{\mu}_{i_0},\nu_{i_0}^{'(0)})}\widehat{z}^{\widetilde{\nu}}}\biggr)\biggl(\frac{Q_1}{z^{\min(\widetilde{\mu}_{i_0},\nu_{i_0}^{'(0)})}_{i_0}\widehat{z}^{\widetilde{\mu}}}\biggr)
\]
o\`u$\widehat{z}^{\widetilde{\nu}}$ et $\widehat{z}^{\widetilde{\mu}}$ sont des mon\^omes qui ne contiennent pas une puissance de $z_{i_0}$. Comme
\[
\nu_{i_0}^{(0)}+\nu_{i_0}'^{(0)}\geq \widetilde{\nu}_{i_0}+\widetilde{\mu}_{i_0},
\]
et qu'on a suppos\'e $\nu^{(0)}_{i_0}\geq \widetilde{\nu}_{i_0}$
alors
\[
\frac{P_1}{z_{i_0}^{\widetilde{\nu}_{i_0}+\widetilde{\mu}_{i_0}-\min(\widetilde{\mu}_{i_0},\nu_{i_0}^{'(0)})}\widehat{z}^{\widetilde{\nu}}}\quad \text{et}\quad \frac{Q_1}{z^{\min(\widetilde{\mu}_{i_0},\nu_{i_0}^{'(0)})}_{i_0}\widehat{z}^{\widetilde{\mu}}},
\]
sont deux \'el\'ements de
\[
\Z\bigl[z_1^\pm,\ldots,z_{i_0-1}^\pm,z_{i_0},z_{i_0+1}^\pm,\ldots,z_n^\pm  \bigr].
\]
On conclut par r\'ecurrence qu'on peut trouver $\widetilde{\nu}',\widetilde{\mu}'\in \N^n$ tels que $\widetilde{\nu}+\widetilde{\mu}=\widetilde{\nu}'+\widetilde{\mu}'$ et
\[
z_1 z_2^{p_2\nu_2}\cdots z_l^{p_l\nu_l} z_{l+1}^{\nu_{l+1}}\cdots z_n^{\nu_n}-1 =\frac{P_1}{z^{\widetilde{\nu}'}}\frac{Q_1}{z^{\widetilde{\mu}'}},
\]
avec
\[
\widetilde{P}_1:=\frac{P_1}{z^{\widetilde{\nu}'}}\in \Z\bigl[z_1,\ldots,z_n\bigr],\quad \widetilde{Q}_1:=\frac{Q_1}{z^{\widetilde{\mu}'}}\in \Z\bigl[z_1,\ldots,z_n\bigr].
\]
  Si $\deg_{z_1}\widetilde{P}_1\geq 1$,  on \'ecrit $\widetilde{P}_1=R_1\cdot z_1-R_2$, avec
$R_1\neq 0$ et $R_2\in \Z[z_2,\ldots,z_n]$. De \eqref{eq11}, on  d\'eduit que $R_2\cdot
\widetilde{Q}_1=1$, donc $\widetilde{Q}_1$ est une constante.  Vu les changements de variables
qu'on a effectu\'e,  il existe  des entiers $\mu_1,\ldots,\mu_n$ et une constante $c$ tels que
$Q=c\cdot x_1^{\mu_1}\cdots x_n^{\mu_n}$. On conclut que $T$ est irr\'eductible dans
$\Z\bigl[x_1,x_1^{-1},\ldots,x_n,x_n^{-1}\bigr]$.
\end{proof}
\begin{lemma}\label{variete} Si $A$ est de rang $d$, alors il existe $n-d $ vecteurs de $\Z^n$,  $w_1,\ldots,w_{n-d}$, tels que
\begin{equation}\label{eqq}
 X_{\A,1}^\circ=\bigl\{x\in \T^n \,\bigl|\, x^{w_{+i}}=x^{w_{-i}}, \forall\,\, 1\leq i\leq n-d
 \,\bigr\},
\end{equation}

o\`u $w_{+i}:=\bigl(\max(0,w_{i1}),\ldots,\max(0, w_{in})\bigr)$ et $w_{- 
i}:=\bigl(\max(0,-
w_{i1}),\ldots,
\max(0,- w_{in})\bigr)$ pour $i=1,\ldots,n-d$ $($de sorte qu'on a  
$w_i=w_{+i}-w_{-i} $  $)$.  R\'eciproquement, si l'on se donne un 
ensemble de $n-d$ vecteurs $w_1,\ldots,w_{n-d}$ de $\Z^n$ qu'on peut
compl\'eter en une base $\{w_1,\ldots,w_n\}$ de $\Z^n$, alors il 
existe $\A$ un ensemble de $n$ vecteurs de $\Z^d$ tel qu'on a  
\eqref{eqq}.

\end{lemma}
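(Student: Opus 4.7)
The plan is to identify $X_{\A,1}^\circ$ with the image of a homomorphism of algebraic tori and then invoke the duality between tori and their character lattices. Since $a_0=0$, the orbit $X_{\A,1}^\circ=\T^d\ast_\A 1$ lies in the chart $\{x_0\neq 0\}\cong\C^n$ of $\p^n$ and is the image of the group morphism
\[
\phi_\A:\T^d\lra \T^n,\qquad s\longmapsto (s^{a_1},\ldots,s^{a_n}),
\]
whose dual at the level of character lattices is $\psi_\A:\Z^n\lra \Z^d$, $(m_1,\ldots,m_n)\mapsto \sum_i m_i a_i$ (i.e.\ left multiplication by the matrix $A$). Because $A$ has rank $d$, the kernel $L:=\ker\psi_\A$ is a sublattice of $\Z^n$ of rank $n-d$, and it is automatically saturated in $\Z^n$ since it is the kernel of a map into a torsion-free group.

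For the direct implication I would choose any $\Z$-basis $w_1,\ldots,w_{n-d}$ of $L$. By the standard duality between diagonalizable groups and finitely generated abelian groups, $\mathrm{Im}\,\phi_\A$ coincides with the subgroup of $\T^n$ annihilated by the characters in $L$,
\[
\mathrm{Im}\,\phi_\A=\bigl\{x\in \T^n\,\bigl|\,x^{w_i}=1,\,\forall\, 1\leq i\leq n-d\bigr\},
\]
and the saturation of $L$ guarantees that this scheme-theoretic intersection is reduced and connected. Rewriting each $w_i=w_{+i}-w_{-i}$ with $w_{\pm i}\in \N^n$, the condition $x^{w_i}=1$ becomes $x^{w_{+i}}=x^{w_{-i}}$ on $\T^n$, which is exactly \eqref{eqq}. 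For the converse, starting from vectors $w_1,\ldots,w_{n-d}\in \Z^n$ I would pass to a basis of the saturation $L'\subseteq\Z^n$ of the sublattice they generate; since $\Z^n/L'$ is free of rank $d$, any isomorphism $\Z^n/L'\cong \Z^d$ yields vectors $a_i\in \Z^d$ (images of the canonical basis) for which $\ker\psi_\A=L'$, and the direct part applied to this $\A$ gives \eqref{eqq}.

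The main obstacle, and really the only subtle point in the argument, is this saturation issue: the equations $x^{w_{+i}}=x^{w_{-i}}$ define a closed subgroup scheme of $\T^n$ with $[L':L]$ connected components, where $L$ is generated by the $w_i$ and $L'$ is its saturation. If the $w_i$ are not chosen to span a saturated sublattice, the resulting subgroup is disconnected and cannot equal the irreducible orbit $X_{\A,1}^\circ$; this is why in the direct implication one must take a basis of $\ker\psi_\A$ and in the converse one must saturate before constructing $\A$. Once this point is handled, the lemma is a routine consequence of the anti-equivalence between algebraic tori over $\overline{\Q}$ and free finitely generated abelian groups.
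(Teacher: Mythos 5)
Your proof is correct, but it takes a genuinely different route from the paper's. The paper also works with the saturated lattice $\ker({}^tA)\subset\Z^n$ and uses the structure theorem for modules over a principal ideal domain to pick a basis $w_1,\ldots,w_{n-d}$ of it that extends to a basis of $\Z^n$; but instead of invoking the anti-equivalence between diagonalizable groups and finitely generated abelian groups, it checks the two inclusions of \eqref{eqq} by hand. The inclusion $X_{\A,1}^\circ\subseteq\{x^{w_i}=1\}$ is the same one-line computation in both arguments. For the reverse inclusion the paper splits the condition $x^{w_i}=1$ into $\langle w_i,\log|x|\rangle=0$ and $\langle w_i,\arg(x)\rangle\in\Z$, solves the first using the orthogonal decomposition $\R^n=\ker({}^tA)_\R\oplus A\cdot\R^d$, and solves the second via an explicit integral linear system whose matrix is the Gram matrix $G={}^tW\cdot W$, invertible over $\Z$ precisely because the $w_i$ extend to a basis of $\Z^n$ --- this is where saturation enters the paper's argument, playing exactly the role of your connectedness observation. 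Your approach buys brevity and works directly over $\overline{\Q}$ (the paper's transcendental argument with $\exp$ and $\arg$ really establishes the equality of $\C$-points), at the price of relying on the closedness of images of homomorphisms of algebraic groups and on torus--lattice duality, whereas the paper's argument is longer but elementary and self-contained. Note finally that the paper's proof does not address the converse at all; you do, and you rightly flag that, as literally stated, the converse needs the $w_i$ to span a saturated sublattice, since otherwise the right-hand side of \eqref{eqq} is a disconnected subgroup that cannot coincide with the irreducible orbit.
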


\begin{proof}

Soit ${}^t A  $ la matrice transpos\'ee de $A$, on a $\ker({}^t A) $ est un sous-groupe satur\'e de $\Z^n$ de rang $n-d$, par le th\'eor\`eme de structure des modules de type fini sur un anneau principal, voir par exemple \cite[th\'eor\`eme
7.8]{Lang}, il existe une base $\bigl\{w_1,\ldots,w_n\bigr\}$  de $\Z^n$ telle que
$\bigl\{w_1,\ldots,w_{n-d}\bigr\}$ soit une base de $\ker({}^t A)$ et donc si l'on pose:
\[
 P_i(x)=x^{w_i}-1 \in \Z\bigl[x_1,x_1^{-1},\ldots,x_n,x_n^{-1}\bigr]\quad \forall\,\,
i=1,\ldots,n-d,
\]
alors par  le lemme \eqref{irreductible}, ces polyn\^omes sont irr\'eductibles dans $\Z\bigl[x_1,x_1^{-1},\ldots,x_n,x_n^{-1}\bigr] $.\\

Montrons que
\[
 P_j(X_{\A,1}^\circ)=\{0\}, \;\text{avec}\; j=1,\ldots,n-d.
\]

Soit donc, $x\in X_{\A,1}^\circ$. Par d\'efinition, il existe $t\in \T^d$ tel que
$x=(1,t^{a_1},\ldots,t^{a_n})$. On a $x^{w_j}=\prod_{i=1}^n t^{a_i
w_{ji}}=\prod_{i=1}^n\prod_{k=1}^d t^{a_{ik}w_{ji}}=\prod_{k=1}^d t^{<a_k,w_j>}=1$, pour tout
$j=1,\ldots,n-d$.\\

R\'eciproquement, soit $x\in \T^n $ tel que
\begin{equation}\label{zeroPjjj}
 x^{w_i}-1=0,\quad \forall\,  i=1,\ldots, n-d.
\end{equation}

Si $i=1,\ldots,n$, on choisit un r\'eel qu'on note $\arg(x_i)$, tel que
$x_i=|x_i|\exp\bigl(2\pi\sqrt{-1} \arg(x_i)\bigr)$ et on pose
$\arg(x)=\bigl(\arg(x_1),\ldots,\arg(x_n)\bigr)$ et
$\log|x|=\bigl(\log|x_1|,\ldots,\log|x_n|\bigr)$. Alors,
le syst\`eme d'\'equations \eqref{zeroPjjj} est \'equivalent aux
  deux syst\`emes d'\'equations suivants:
\begin{equation}\label{eq1}
 <w_i,\log|x|>=0 ,\quad i=1,\ldots,n-d,
\end{equation}
\begin{equation}\label{eq2}
 k_i:=<w_i,\arg(x)>\in \Z\quad i=1,\ldots,n-d.
\end{equation}
On v\'erifie que $\ker({}^t A )_\R$ et $A\cdot \R^d$ sont orthogonaux pour le produit scalaire
standard de $\R^n$, et comme $A$ est de rang $d$ et que ses vecteurs lignes  engendrent $\Z^d$
par hypoth\`ese, alors on a la somme directe orthogonale suivante:
\begin{equation}\label{somme}
 \R^n=\ker({}^t A )_\R\oplus^\perp A\cdot \R^d,
\end{equation}
On en  d\'eduit que \eqref{eq1} admet une solution, c'est \`a dire, il existe $y\in \R^d$ tel que
$|x|=\exp(A\cdot y) $.\\

Montrons qu'il existe $n$ entiers $v_1,\ldots v_n$ tels que 
\begin{equation}\label{x13}
\sum_{j=1}^nv_j<w_i,w_j>=k_i \quad
i=1,\ldots,n-d,
\end{equation}
Pour cela, on pose $k:=(k_1,\ldots,k_{n-d},0,\ldots,0)\in \Z^n$ et $G:=(<w_i,w_j>)_{1\leq i,j\leq
n}$. On a, la matrice $G$ est inversible et son d\'eterminant vaut 1, en fait, on peut \'ecrire
$G={}^tW\cdot W$ avec $W$ est une matrice $n\times n$ inversible dans l'espace des matrices \`a
coefficients dans $\Z$ et dont les colonnes sont les vecteurs $w_1,\ldots,w_n$, qui forment une
base de $\Z^n$ par hypoth\`ese. Par cons\'equent,
on peut
trouver $\overline{v}\in\Z^n$ tel que \[G\cdot \overline{v}=k.\]

On consid\`ere $\arg(x)-\overline{v}$ dans $\R^n$. Il existe $\theta\in \R^d$ et $q\in \ker({}^t
A)_\R$ tels que
\[
 \arg(x)-\overline{v}=q+A\theta.
\]
Mais comme $<\arg(x),w_i>=k_i=<\overline{v},w_i>$ , alors $<q,w_i>=0$ et cela pour  tout
$i=1,\ldots,n-d$
. C'est \`a dire que $q$ est orthogonal \`a $\ker({}^t A)_{\R}$, donc $q=0$.\\

Si l'on pose
\[
 z:=y+2\pi \sqrt{-1}\theta,
\]
alors
\[\begin{aligned}
 \exp(Az)&=\exp(Ay+2\pi \sqrt{-1}
A\theta)\\
&=|x|\exp\bigl(2\pi \sqrt{-1}(\arg(x)-\overline{v})\bigr)\\
&=|x|\exp\bigl(2\pi \sqrt{-1}(\arg(x)\bigr)\quad \text{car}\; \overline{v}\in \Z^n\\
&=x.
\end{aligned}
\]
Donc en posant $t=e^z$, alors
\[
 x=\bigl(e^{<a_1,z>},\ldots,e^{<a_n,z>} \bigr)=\bigl(t^{a_1},\ldots,t^{a_n} \bigr)
\]
c'est \`a dire que
\[
 x\in X_{\A,1}^\circ.
\]
\end{proof}

\section{Hauteurs canoniques des sous-vari\'et\'es toriques}\label{x200}

Soit $\beta=(\beta_0,\ldots,\beta_n)
\in (\overline{\Q}^\ast)^{n+1}$. On peut supposer
que $\beta_0=1$\footnote{ On v\'erifie imm\'ediatement que  $X_{\A,\beta}=X_{\A,\gamma\cdot 
\beta}$ o\`u $\gamma\cdot \beta:=(\gamma\cdot \beta_0,\ldots,
\gamma\cdot \beta_n)$ pour tout $\gamma\in \overline{\Q}^\ast$}. 
Rappelons qu'on a suppos\'e que $L_{\A}\simeq \Z^d$, et donc
$X_{\A,\beta}$ est int\`egre. On pose
$K:=\Q(\beta_1,\ldots,\beta_n)$
et soit $\mathcal{O}_K$ l'anneau
des entiers de $K$. Soient $w_1,\ldots,w_{n-d},$
$n-d$ vecteurs de 
$\Z^d$ comme dans Lemme \eqref{variete}.  

 Pour tout $i\in 
\{1,2,\ldots,n-d\}$, on note par $Q_i$ (resp. $R_i$)
le polyn\^ome dans 
$K[x_1,x_2,\ldots,x_n]$ (resp. $K[T_0,\ldots,T_n]$) suivant
\begin{equation}\label{x16}
\begin{split}
 Q_i&:=\beta^{-w_{+i}}x^{w_{+i}}-\beta^{-w_{-i}} x^{w_{-i}},\quad R_i(T_0,T_1,\ldots,T_n):=\beta^{-w_{+i}}T^{u_{+i}}-\beta^{-w_{-i}}
T^{u_{-i}}\\
\text{avec}\quad u_i&:=(-\sum_{k=1}^nw_{ik},w_{i1},\ldots,w_{in}),
\end{split}
\end{equation}
o\`u $\beta^{w_\ast}:=\prod_{k=1}^n \beta_k^{w_{\ast,k}}$ 
et $T^{u_\ast}:=\prod_{k=0}^n T_k^{u_{\ast,k}}$.

On note par $\X_{\A,\beta}$ le sous-sch\'ema ferm\'e int\`egre
 de
$\p^n_{\mathcal{O}_K}=\mathrm{Proj}\bigl(\mathcal{O}_K\bigl[T_0,T_1,
\ldots,T_n\bigr]\bigr)$ d\'efini par l'id\'eal
homog\`ene, $\mathcal{O}_K[T_0,\ldots,T_n]\cap 
I$ o\`u  $I$ est l'id\'eal de $K[T_0,\ldots,T_n]$  
d\'efinissant $X_{\A,\beta}$. 
\begin{definition}\label{condA}
Soit $X_{\A,\beta}$ une vari\'et\'e torique projective int\`egre 
de dimension
$d$ dans $\p^n(\overline{\Q})$ et $K$ un corps de d\'efinition de
$X_{\A,\beta}$. On dit que $X_{\A,\beta}$ satisfait
la condition $\mathscr{A}$, si les  polyn\^omes homog\`enes $R_1,
\ldots,R_{n-d}$
 de $K[T_0,\ldots,T_n]$ d\'efinisssent
$X_{\A,\beta}$  et 
 pour tout $k\geq 2$, la vari\'et\'e d\'efinie par 
$R_k,\ldots,R_{n-d}$ est int\`egre.
\end{definition}

\begin{remarque}\label{nedepenbeta}
La condition $\mathscr{A}$ ne d\'epend pas de $\beta=(1,\beta_1,
\ldots,\beta_n)$. En effet,
si l'on consid\`ere $\theta$ le automorphisme de $\p^n_K$
 d\'efini par  
 l'isomorphisme d'alg\`ebres suivant 
 \[
K[T_0,\ldots,T_n]\rightarrow K[T_0,\ldots,T_n],\quad
T_i\mapsto \beta_iT_i, 
 \]
 alors on v\'erifie que 
 $\theta (X_{\A,\beta})=X_{\A,1}$. En d'autres termes, on peut
 supposer que $\beta=(1,\ldots,1)$ dans la d\'efinition.
 \end{remarque}
\begin{example}\label{exemple2}
Gr\^ace \`a la notion de matrice mixte et dominante, on peut
construire des exemples de vari\'et\'es toriques $X_{\A,\beta}$
v\'erifiant D\'efinition \eqref{condA} (voir 
la section \ref{x100} pour plus de d\'etails.)
\end{example}

Soient $R_1,\ldots,R_{n-d}$ comme avant.
 On suppose qu'il existe un corps de nombres $F$ tel que 
son anneau des entiers est factoriel et que $\beta\in (F^\ast)^{n+1}$. Sous
cette hypoth\`ese, on a le lemme suivant 

\begin{lemma}\label{x5}
Pour tout $i=
1,\ldots,n-d$, il existe $\tau_i\in K$ tel que
 \[
 (R_i)\cap \mathcal{O}_K[T_0,\ldots,T_n]=(\tau_i R_i).
 \]
\end{lemma}

\begin{proof}
Soit $i\in \{1,\ldots,n-d\}$. On a 
$R_i=\beta^{-w_{+i}}T^{u_{+i}}-\beta^{-w_{-i}}T^{u_{-i}}=
\beta^{-w_{+i}}(T^{u_{+i}}-\beta^{w_i}T^{u_{-i}})$. Comme
$\beta\in (F^\ast)^{n+1}$ avec $\mathcal{O}_F$ est factoriel, alors 
il existe $a_i$ et $b_i$ dans $\mathcal{O}_F$ premiers entre eux tels que
$\beta^{w_i}=\frac{a_i}{b_i}$. On pose
$\tau_i:=\beta^{w_{+i}}b_i$. Montrons que
\[
 (R_i)\cap \mathcal{O}_K[T_0,\ldots,T_n]=(\tau_i R_i).
\]
Cela peut se d\'eduire du point $(3.)$ de l'exemple \eqref{x9}.
\end{proof}

Ci-dessous, on construit des exemples de corps de nombres $F$ et 
des \'el\'ements  $\beta\in (F^\ast)^n$ qui v\'erifient le r\'esultat
du Lemme \eqref{x5}. En d'autres termes, on ne suppose pas n\'ecessairement
que l'anneau des entiers de $F$ soit factoriel.
\begin{example}\label{x9}

\begin{enumerate}
\item 
Soit $L_\A\subset \Z^d$ le sous-module engendr\'e par les vecteurs $a_1,a_2,\ldots,a_n$, que l'on  suppose isomorphe \`a $\Z^d$. 
On note par $H:=\{x\in \R^n|\, \langle x,w_i\rangle =0,\, \text{pour}\, i=1,\ldots,n-d  \}$\footnote{Ici $\langle \cdot,\cdot \rangle$ est le produit scalaire
usuel sur $\R^n$.}.  
Soit $\beta=(1,\beta_1,\ldots,\beta_n)\in (\overline{\Q}^\ast)^{n+1}$ avec  
$\beta_k:=e^{v_k} \mu_k$ pour $k=1,\ldots,n$ avec 
$v:=(v_1,\ldots,v_n)\in H\cap \log ((\overline{\Q}^\ast_{>0})^n)$ et $\mu_i$ est une racine
de l'unit\'e pour $i=1,\ldots,n-d$. On a 
$\beta^{w_k}=e^{\langle v,w_k\rangle } \prod_{i=1}^n \mu_i^{w_{ki}}= 
\prod_{i=1}^n \mu_i^{w_{ki}}$, qui est une racine de l'unit\'e et donc
un \'el\'ement de $\mathcal{O}_K$.  Donc, si l'on pose  
$\tau_k:= \beta^{w_{+k}}$ pour tout $k=1,\ldots,n-d$, alors 
$\tau_k R_k=T^{w_{+k}}-\beta^{w_k}T^{u_{-k}} \in \mathcal{O}_K[T_0,
\ldots,T_n]$ et  on v\'erifie que
 \[
 (R_k)\cap \mathcal{O}_K[T_0,\ldots,T_n]=( \tau_k R_k)\quad
 \forall k=1,\ldots,n-d,
 \]
 avec $K=\Q(\beta_1,\ldots,\beta_n)$.
 \item Si $\beta=(1,\beta_1,\ldots,\beta_n)\in (\overline{\Q}^\ast)^{n+1}$ 
avec $\beta^{-w_{+i}},\beta^{-w_{-i}}\in \Q$  pour tout $i=1,\ldots,n-d$. Alors
 $\beta$ v\'erifie le r\'esultat du Lemme \eqref{x5}. En effet, cela
 r\'esulte du lemme suivant
 \begin{lemma}
 Soit $v=(v_1,\ldots,v_n)\in \Z^n$ avec 
 $\gcd(v_1,\ldots,v_n)=1$. Soient $a,b\in \Z$ avec $\gcd(a,b)=1$, alors  
 l'id\'eal $(bx^v+a)$ est premier dans $\Z[x_1^{\pm},\ldots,
 x_n^{\pm}]$.
 \end{lemma}
 \begin{proof}
Si $P\in \Z[x_1^{\pm},\ldots,
 x_n^{\pm}]$ on appelle le contenu de $P$ et on le note par
 $c(P)$, l'\'el\'ement de $\Z$ d\'efini comme \'etant le 
 plus grand commun diviseur des coefficients de $P$. On v\'erifie
 que $c(a P_1)=ac(P_1)$ et $c(P_1P_2)=c(P_1)c(P_2)$ pour tous 
 $a\in \Z$ et $P_1,P_2\in \Z[x_1^{\pm},\ldots,
 x_n^{\pm}]$.
 
  Montrons que 
 $(bx^v+a)$ est premier dans $ \Z[x_1^{\pm},\ldots,
 x_n^{\pm}]$. Soient $P,Q\in  \Z[x_1^{\pm},\ldots,
 x_n^{\pm}]$ tels que $PQ\in (bx^u+a)$. Notons que
 $(bx^v+a)$ est premier  $\Q[x_1^{\pm},\ldots,
 x_n^{\pm}]$, cela r\'esulte du Lemme \eqref{irreductible}\footnote{On peut
 v\'erifier que le lemme \eqref{irreductible} est valable si l'on
  consid\`ere $\overline{\Q}$ \`a la place de $\Z$.}.
  Donc, on peut supposer qu'il existe 
 $q\in \Z\setminus\{0\}$ et $R\in  \Z[x_1^{\pm},\ldots,
 x_n^{\pm}]$ tels que
 $q P=(bx^v+a)R$. Par suite,
 $q c(P)=c(R)$ (puisque $\gcd(a,b)=1$). Par cons\'equent
 $\frac{R}{q}\in \Z[x_1^{\pm},\ldots,
 x_n^{\pm}]$ et donc $P\in (bx^v+a)$.
 \end{proof}

 \item L'exemple suivant g\'en\'eralise les deux exemples 
 pr\'ec\'edents:  Soit $v=(v_1,\ldots,
 v_n)\in \Z^n$ avec $\gcd(v_1,\ldots,v_n)=1$. Soit $F$ un corps de nombres et soit $\mathcal{O}_F$
 son anneau des entiers. 
 Soient $a,b\in \mathcal{O}_F\setminus\{0\}$
  avec $(a)+(b)=\mathcal{O}_F$\footnote{
 Par exemple, si $a\in \mathcal{O}_F\setminus\{0\}$ alors
 tout $b\in \Z\setminus\{0\}$ avec $\gcd(b, N_F(a))=1$, (o\`u 
 $N_F(a)$ est 
 la norme de 
 $a$ dans $F$) on a $(a)+(b)=\mathcal{O}_F$.}.
 Montrons que 
 l'id\'eal $(bx^v+a)$ est premier dans $\mathcal{O}_F[x^{\pm}_1,
 \ldots,x_n^{\pm}]$.  Soient $P,Q\in  \mathcal{O}_F[x_1^{\pm},\ldots,
 x_n^{\pm}]$ tels que $PQ\in (bx^v+a)$. Notons que
 $(bx^v+a)$ est premier  $F[x_1^{\pm},\ldots,
 x_n^{\pm}]$, cela r\'esulte de \eqref{irreductible}.
  Donc, on peut supposer qu'il existe 
 $r\in \mathcal{O}_F\setminus\{0\}$ et $R\in  \mathcal{O}_F
 [x_1^{\pm},\ldots,
 x_n^{\pm}]$ tels que
 $r P=(bx^v+a)R$. Si $S\in \mathcal{O}_F[x_1^{\pm},\ldots,x_n^{\pm}]$
 alors on note par $\widetilde{c}(S)$ l'id\'eal dans $\mathcal{O}_F$
 engendr\'e par les coefficients de $S$. Par suite, $r P=(bx^v+a)R$ 
 implique que
 $r c(P)=c(R)$ (puisque $(a)+(b)=\mathcal{O}_F$). Par cons\'equent
 $\frac{R}{r}\in \mathcal{O}_F[x_1^{\pm},\ldots,
 x_n^{\pm}]$ et donc $P\in (bx^v+a)$.\\

 Pour tout $j=1,\ldots,n-d$, soient
 $a_j$ et $b_j$ deux \'el\'ements de $\mathcal{O}_F$ avec
 $(a_j)+(b_j)=\mathcal{O}_F$. Il existe 
 $\beta=(1,\beta_1,\ldots,\beta_n)\in (\overline{\Q}^\ast)^{n+1}$ avec
  tel que 
 $\beta^{w_j}=\frac{a_j}{b_j}$  pour tout $j=1,\ldots,n-d.$ L'existence de $\beta$
r\'esulte du fait que l'application suivante
 \[
 (\C^\ast)^{n}\longrightarrow (\C^\ast)^{n},\quad
 \gamma\mapsto (\gamma^{w_i})_{1\leq i\leq n},
 \]
 
 est un isomorphisme (Rappelons que 
 $\{w_1,\ldots,w_n \}$ est une base de $\Z^n$). Donc, si
 l'on pose $\tau_{i}:=b_i\beta^{w_{+i}}$  alors 
 $(\tau_{i} R_i)=(b_iT^{u_{+i}}-a_iT^{u_{-i}})$ qui est premier, et 
 on v\'erifie que
 \[(R_i)\cap \mathcal{O}_K[T_0,\ldots,T_n]=(\tau_{i} R_i)\quad
 \forall i=1,\ldots,n-d.\]
 
\end{enumerate}
\end{example}
\begin{remarque}
\rm{Soit $\beta\in (F^\ast)^{n+1}$ avec $F$ un corps de nombres tel que
son anneau des entiers soit factoriel, alors on v\'erifie 
 que pour tout choix de $\A$ et en gardant les notations du \ref{x16}, il
 existe $\tau_i \in K$ pour tout $i=1,\ldots,n-d$ tels que
 \[
 (R_i)\cap \mathcal{O}_K[T_0,\ldots,T_n]=(\tau_i R_i)\quad 
 i=1,\ldots,n-d.
 \] 
 La preuve de ce fait est une simple adapation de celle du Lemme 
 \eqref{x5}. }
\end{remarque}

Dans la suite, on aura besoin du lemme d'alg\`ebre suivant 
\begin{lemma}\label{lemmemodele}
Soit $A$ un anneau commutatif unitaire int\`egre, et $F$ son
anneau de fractions. On note par $A[T_0,\ldots,T_n]$
(resp. $F[T_0,\ldots,T_n]$) l'anneau des polyn\^omes 
\`a $n+1$ variables \`a coefficients dans $A$ (resp. $F$). Soit
$l\in \{1,\ldots,n\}$.  
Soient $R_1,\ldots,R_l$, $l$ polyn\^omes homog\`enes dans $F[T_0,\ldots,T_n]$
tels que
\begin{enumerate}
\item  La suite d'id\'eaux suivante $\bigl((R_k,R_{k+1},\ldots,R_l)\bigr)_{k=1,\ldots,l}$  d\'efinit une suite de sous-vari\'et\'es de $\p^n_F$,
 strictement
d\'ecroissante pour l'inclusion.
 \item $( R_k)$ est premier dans $F[T_0,\ldots,T_n]$ pour tout
 $k=1,\ldots,l$.
\item  Il
existe $\si_1,\ldots,\si_l\in F$ tels que
$(R_k)\cap A[T_0,\ldots,T_n]=(\si_k R_k)$ pour tout
 $k=1,\ldots,l$.
\end{enumerate}  
Alors,
$(R_k,R_{k+1},\ldots,R_l)\cap A[T_0,\ldots,T_n]=(\si_kR_k,\ldots,\si_lR_l)$ et 
$(\si_kR_k,\ldots,\si_lR_l)$ est premier dans 
$A[T_0,\ldots,T_n]$ pour tout $k=1,\ldots,n$.

\end{lemma}

\begin{proof}
Posons $J_k:=(\si_k R_k,\ldots,
\si_l R_l)$ pour tout $k=1,\ldots,l$.
Montrons ce r\'esultat par r\'ecurrence sur $k$. Si 
$k=l$, alors le r\'esultat est vrai par hypoth\`ese. Supposons
que le lemme est vrai pour un certain $k+1$. Le morphisme
suivant \[
(\si_k R_k)\rightarrow \frac{J_{k}}{J_{k+1}},\quad
\]
qui envoie un \'el\'ement de $(\si_k R_k)$ vers sa classe
dans $\frac{J_{k}}{J_{k+1}}$ est clairement 
surjectif. Soit $Q\in A[T_0,\ldots,T_n]$ tel que 
$Q \cdot \si_kR_k\in J_{k+1}$. Comme $J_{k+1}$ est premier (par
hypoth\`ese de r\'ecurrence) alors on a n\'ecessairement 
$Q\in J_{k+1}$, car sinon $\si_kR_k\in J_{k+1}=(\si_{k+1} R_{k+1},\ldots,
\si_lR_l)$ et par cons\'equent
 $(R_k,R_{k+1},\ldots,R_l)=(R_{k+1},R_{k+2},\ldots,R_l)$ dans
 $F[T_0,\ldots,T_n]$, ce qui 
contredit l'hypoth\`ese $(1.)$. Donc,
\[
\frac{(\si_kR_k)}{J_{k+1}}\simeq \frac{J_k}{J_{k+1}},
\]
Par $(2.)$ et $(3.)$, on d\'eduit que $\frac{J_k}{J_{k+1}}$  est premier. 
Puisque $\frac{A[T_0,\ldots,T_n]}{J_k}\simeq \frac{A[T_0,\ldots,
T_n]}{J_{k+1}}/\frac{J_k}{J_{k+1}}$. Alors $J_k$ est premier dans
$A[T_0,\ldots,T_n]$. 

On a clairement $(\si_kR_k,\ldots,\si_lR_l)\subset 
(R_k,R_{k+1},\ldots,R_l)\cap A[T_0,\ldots,T_n]
$ pour tout $k=1,\ldots,l$.
 Si $f\in (R_k,R_{k+1},\ldots,R_l)\cap A[T_0,\ldots,T_n]$, 
 alors il existe $L_1,\ldots,L_l\in F[T_0,\ldots,T_n]$ tels que
 $f=\sum_{i=k}^lL_i R_i=\sum_{i=k}^{l}\si_i^{-1}L_i (\si_i R_i)$.
 On peut trouver $\al\in A$ tel que $\al \si_i^{-1}L_i 
 \in A[T_0,\ldots,T_n]$ pour $i=k,\ldots,l$. Par suite, $\al f\in   J_k$. Comme cet id\'eal est premier, alors on a  n\'ecessairement 
 $f\in J_k$. Ce qui termine la preuve du lemme.
\end{proof}

On suppose dans la suite que $X_{\A,\beta}$ satisfait la condition $
\mathscr{A}$ (voir D\'efinition \eqref{condA}) et que $\beta \in (F^\ast)^{n+1}$ avec $F$ est un corps de nombres
dont l'anneau des entiers est factoriel.
  Rappelons que $\Z w_1+\ldots+\Z w_{n-d}$ est un sous-module libre
et satur\'e de $\Z^n$ qu'on a complet\'e en une base  
 $\{w_1,\ldots,w_{n-d},w_{n-d+1},\ldots,w_n\}$  
de $\Z^n$.  On pose $A_1:=A$ et note par 
$b_1,\ldots,b_d$ les lignes de ${}^t A_1$. Pour tout
$k=1,2,\ldots,n-d-1$, soit $\vf_k$ l'homorphisme de $\Z$-modules
suivant

\[
\vf_k:\Z^n\longrightarrow \Z^{n-d-k}\quad 
x\mapsto (<x,w_j>)_{1\leq j\leq n-d-k}\quad \forall 1\leq k\leq n-d-
1.
\]
En notant que $\ker(\vf_k)$ est satur\'e.
 Par le th\'eor\`eme de la base adapt\'ee, on peut trouver $b_{d+1},b_{d+2},
\ldots,b_n\in \Z^n$ tels que $\ker(\vf_k)=\Z b_{k+d}+\cdots
+\Z b_d+\ldots+\Z b_1$, pour $k=1,\ldots,n-d-1$\footnote{Les vecteurs
$b_{d+1},\ldots,b_n$ peuvent \^etre obtenus 
par r\'ecurrence, en utilisant la r\'eduction de Smith. }.  Si l'on note par 
$A_k$ la matrice de taille $(k+d)\times n$ dont la matrice transpos\'ee
${}^t A_k$ est form\'ee par les lignes $b_1,\ldots,b_{k+d}$ pour $k=
1,\ldots,n-d-1$. Alors, 
\[
\ker( {}^tA_k)=\Z w_1+\cdots +\Z w_{n-k}\quad \forall k=1,\ldots,n-d-1.
\] 
Par suite, $X_{A_k,\beta}$ est la vari\'et\'e d\'efinie 
par l'id\'eal $J_k:=(R_k,\ldots,R_{n-d})$ et 
$X_{\A_k,\beta}$ est une hypersurface dans $X_{\A_{k+1},\beta}$ 
donn\'ee
par $R_{k}$ pour tout $ k=1,\ldots,n-d-1$, et 
$X_{\A_{n-d},\beta}$ donn\'ee par $R_{n-d}$ dans $\p^n_K$ par la condition $\mathscr{A}
$.

Pour tout $k=1,\ldots,n-d$,
 on note par $\X_{\A_k,\beta}$ la cl\^oture de Zariski de 
 $X_{\A_k,\beta}$. Autrement dit, $\X_{\A_k,\beta}$ est le 
 sch\'ema projectif d\'efini par l'id\'eal premier homog\`ene
 \[
 J_k\cap \mathcal{O}_k[T_0,\ldots,T_n].
 \]
 Comme on a suppos\'e que $X_{\A,\beta}$ v\'erifie l'hypoth\`ese 
 $\mathscr{A}$ et  que  $\beta\in (F^\ast)^{n+1}$ avec $F$ un corps de nombres
 dont l'anneau des entiers soit factoriel, alors $R_1,\ldots,R_{n-d}$ v\'erifie
les conditions du lemme \eqref{lemmemodele}\footnote{
Les conditions 1. et 3. du lemme \eqref{lemmemodele} sont automatiques dans ce cas. Comme l'application \'evidente 
$(R_i)\rightarrow (R_i,\ldots,R_{n-d})/(R_{i+1},\ldots,R_{n-d})$ est surjective et que $(R_i,\ldots,R_{n-d})$ est premier
car $X_{\A_i,\beta}$ est int\`egre par hypoth\`ese, alors on d\'eduit
la condition $2.$} .  Par suite  $\X_{\A_k,
\beta}$ 
 est le mod\`ele int\`egre pour $X_{\A_k,\beta}$ sur 
 $\mathrm{Spec}(\mathcal{O}_K)$ d\'efini par 
 \[
 (\tau_kR_k,\ldots,\tau_{n-d}R_{n-d}),
 \] 
 
et que  pour tout $k=1,\ldots,n-d-1$,  $\X_{\A_k,\beta}$ 
est une hypersurface dans $\X_{A_{k+1},\beta}$ donn\'ee  par 
le polyn\^ome $s_k:=\tau_{k}R_k\in \mathcal{O}_K\bigl[T_0,\ldots,T_n \bigr]$ et $\X_{\A_{n-d},\beta}$ est l'hypersurface 
dans $\p^n_{\mathcal{O}_K}$ d\'efinie par $s_{n-d}:=\tau_{n-d}
R_{n-d}$.
 On a alors la  
 suite suivante
 de sch\'emas projectives int\`egres sur $\mathrm{Spec}(\mathcal{O}_K)$

 \begin{equation}\label{x3}
 \X_{\A_1, \beta}:=\X_{\A,\beta}
 \subsetneq \X_{\A_2,\beta}\subsetneq \cdots\subsetneq \X_{\A_{n-d},
 \beta}\subsetneq \p^n_{\mathcal{O}_K}.
\end{equation}

 Par la formule due \`a Faltings (voir \cite[(3.2.1) p. 949]{BoGS} pour les
m\'etriques
$\mathcal{C}^\infty$ et \cite{Maillot} pour les m\'etriques non $\mathcal{C}^\infty$) on a pour tout $j=1,\ldots,n-d-1$
\begin{equation}\label{faltings}
\begin{split}
h_{{}_{\overline{\mc}_\infty}}\bigl(\X_{\A_j,\beta}\bigr)&=
\deg(R_{
j})h_{{}_{\overline{\mc}_\infty}}\bigl(\X_{\A_{j+1
},\beta}\bigr)+\sum_{\si:K\rightarrow \C}\int_{\X_{\A_{j+1},\beta}(\C)}\log\bigl\|s_j\bigr\|_{\si,\infty}c_1\bigl(
\overline{\mc}_\infty\bigr)^{j+d},\\
h_{{}_{\overline{\mc}_\infty}}\bigl(\X_{\A_{n-d},\beta}\bigr)&=
\deg(R_{n
-d})h_{{}_{\overline{\mc}_\infty}}\bigl(\p^n_{\mathcal{O}_K}\bigr)+\sum_{\si:K\rightarrow \C}\int_{\p^n(\C)}\log\bigl
\|s_j\|_{\si,\infty} c_1\bigl(
\overline{\mc}_\infty\bigr)^{n}
\end{split}
\end{equation}

Comme la hauteur canonique de $\p^n_{\mathcal{O}_K}$ est nulle
(voir \cite[Proposition 7.1]{Maillot}), alors 
le calcul de la hauteur canonique de $\X_{\A,\beta}$
 r\'esultera de \eqref{faltings}, si l'on trouve une formule explicite pour $
c_1\bigl(
\overline{\mc}_\infty\bigr)^{j+d} $ apparaissant dans 
l'int\'egrale de la formule 
\eqref{faltings}. Cela fera l'objet du th\'eor\`eme
\eqref{courant}. 
On  a tout d'abord la proposition suivante:

\begin{proposition}\label{formule} Pour tout
$i=1,\ldots,n-d-1$, on a
\begin{equation}\label{for}
\begin{split} 
h_{\overline{\mc}_\infty}\bigl(\X_{\A_i,
\beta}\bigr)&=\deg(R_i)\,h_{\overline{\mc}_\infty}\bigl(\X_{
\A_{i+1},\beta}\bigr)+\deg(X_{\A_{i+1},1})\log|N_K(\tau_{i})| \\
&+\sum_{\si:K\rightarrow \C}\int_{t\in\T^{d+i}(\C)}\log \frac{\bigl|
 Q_{i}\bigl(\ast_{\A_{i+1},\beta}(t)\bigr)
\bigr|_\si}{\max(|\ast_{\A_{i+1,\beta}}t |_\si )^{\deg(R_{i})}
 }\Bigl(dd^c
\log\max\bigl(|\ast_{\A_{i+1},\beta}(t)|_\si \bigr) \Bigr)^{d+i},
\end{split}
\end{equation}
et
\[
h_{\overline{\mc}_\infty}\bigl(\X_{\A_{n-d},
\beta}\bigr)=\log|N_K(\tau_{n-d})|+\sum_{\si:K\rightarrow \C}\int_{t\in\T^{n}(\C_\si)}\log \frac{\bigl|
 Q_{n-d}(t)
\bigr|_\si}{\max(1,|t_1|_\si,\ldots,|t_n|_\si) )^{\deg(R_{n-d})}
 }\Bigl(dd^c
\log\max\bigl(1,|t_1|_\si,\ldots,|t_n|_\si\bigr)  \Bigr)^{n}
\]
o\`u $|\ast_{\A_{i+1},\beta}(t)|_\si:=\bigl(|(\ast_{\A_{i+1},\beta}
(t))_0|_\si
\ldots,|(\ast_{\A_{i+1},\beta}(t))_n|_\si\bigr)$ avec
$\ast_{\A_{i+1},\beta}(t)=(\ast_{\A_{i+1},\beta}(t)_0,
\ldots,\ast_{\A_{i+1},\beta}(t)_n)$ (voir
\eqref{tor}).
\end{proposition}

\begin{proof}
Par la formule d\^ue \`a Faltings (voir \cite[(3.2.1) p. 949]{BoGS} et \cite[7,5.6.6]{Maillot}),
{\allowdisplaybreaks
\begin{align*}
 h_{\overline{\mc}_\infty}&(\X_{\A_i, \beta})=\deg(R_i)\,
 h_{{}_{\overline{\mathcal{O}(1)}_\infty}
}(\X_{\A_{i+1},
\beta})+\sum_{\si:K\rightarrow \C}
\int_{\X_{\A_{i+1},\beta}(\C) }\log \|s_i \|_{\si,\infty} 
c_1(
\overline{\mathcal{O}(1)}_\infty)^{d+i}\\
&=\deg(R_i)\,h_{{}_{\overline{\mathcal{O}(1)}_\infty} }(\X_{\A_{i
+1},
\beta})+ \sum_{\si:K\rightarrow \C}\int_{{}_{\X_{\A_{i+1},1 }(\C)}}\log \|s_i
(\beta\cdot x)
\|_{\si,\infty}\Bigl(dd^c \log
\max (|\beta\cdot x|_\si \Bigr)^{d+i}\\
&=\deg(R_i)\,h_{{}_{\overline{\mathcal{O}(1)}_\infty} }(\X_{
\A_{i+1},
\beta})+ \deg(X_{\A_{i+1},1})\log |N_K(\tau_{i})|\\
&+ \sum_{\si:K\rightarrow \C}\int_{\T^{d+i}(\C)}\log \frac{|Q_i(
\ast_{\A_{i+1,\beta}}t)|_\si}{\max\bigl(|\ast_{\A_{i+1},\beta}t |_\si 
)^{\deg(R_i)} }\Bigl(dd^c \log\max(|\ast_{\A_{i+1,\beta}}t|_\si
)\Bigr) ^{d+i}
\;\text{par}\, \eqref{negligeable}.
 \end{align*}}

La deuxi\`eme \'egalit\'e se d\'eduit de la m\^eme mani\`ere.
\end{proof}

Soit $\A':=\{a_1',\ldots, a_n' \}$ une famille de vecteurs de
$\Z^{d+1}$ telle que $L_{\A'}=\Z^{d+1}$ avec $d+1\leq n$ et 
$\beta=(1,\beta_1,\ldots,\beta_n)\in (\overline{\Q}^\ast)^{n+1} $. Le but de la suite consiste \`a \'etudier
 le courant suivant d\'efini sur $\T^{d+1}(\C)$
\[
 \Bigl(dd^c
\log\max(|\beta\cdot
t^{a'}| )\Bigr )^{d+1},
\]
o\`u $|\beta\cdot
t^{a'}|:=(1,|\beta_1 t^{a'_1}|,\ldots,|\beta_nt^{a'_n}|)$ pour 
tout $t\in \T^{d+1}$. Rappelons que ce courant appara\^it dans la formule
donnant la hauteur de $\X_{\A,\beta}$ (voir \eqref{formule}).\\

\noindent\textbf{\scshape{Notations} 1} :
 \begin{enumerate}
\item On note par $\mathrm{Log}$ l'application moment donn\'ee comme
suit
\begin{equation}\label{Log}
 \begin{aligned}
  \{x_0\neq 0\}=(\mathbb{C}^\ast)^{d+1}
&\stackrel{\mathrm{Log}}{\longrightarrow}\quad\mathbb{R}^{d+1}\\
   (z_1,\ldots,z_{d+1})\quad & \longrightarrow (\log|z_1|,\ldots,\log|z_{d+1}|).
 \end{aligned}
\end{equation}
\item Soit $f_k$  la fonction sur $\T^{d+1}(\C)$ d\'efinie par 
$f_k(t):=\beta_k t^{a'_k}, \forall t\in \T^{d+1}(\C)$ pour tout $k\in \{1, \ldots,n \}$.\\

 \item
 Dans $\R^{d+1}$, on pose
\begin{equation}\label{hyp}
H_{i,j}:=\Bigl\{ u\in \R^{d+1}\; \bigl|\, \left<a'_i-a'_j,u\right>=\log\frac{|\beta_j|}{|\beta_i|}\Bigr\}\quad \forall\, 1\leq i,j\leq n.
\end{equation}

On note par   $\mathscr{H} $  l'ensemble de sous-espaces   affines de $\R^{d+1}$ d\'efini comme suit: $H\in \mathscr{H}$, s'il existe $I$  un
sous-ensemble non-vide de $\bigl\{(i,j)\,\bigl|\; 1\leq i,j\leq d\bigr\}$ tel que $H=\cap_{(i,j)\in I}H_{i,j} $.\\

\item
Soit $S$ l'ensemble des points  de $\R^{d+1}$ d\'efini comme suit:  $s\in S$ si et seulement s'il existe un sous ensemble $I_s$ de $\{1,\ldots,n\}\times  \{1,\ldots,n\} $   tel que

\begin{equation}\label{s}
 \bigl\{s\bigr\}=\bigcap_{\tau\in I_s} H_\tau. 
\end{equation}

 \item On pose
\begin{equation}\label{xas}
\A'_{I_s}:=\bigl\{a'_i\in \A' \,| \;\exists\, a'_j,\; (i,j)\in I_s \bigr\},
\end{equation}
et on consid\`ere $X_{\A'_{I_s},1}$ la vari\'et\'e torique associ\'ee, au sens de Gelfand, Kapranov et Zelevinsky.\\

\item

Soit $s=(s_1,s_2,\ldots,s_{d+1})\in S $, on consid\`ere
\begin{equation}\label{cerlce}
\mathbf{S}_s:=\mathrm{Log}^{-1}(s)=\bigl\{z\in \mathbb{C}^{d+1}\;\bigl|\; |z_1|=|e^{s_1}|,\,\ldots,\, |z_{d+1}|=|e^{s_{d+1}}| \bigr\},
\end{equation}

et  $\delta_{\mathbf{S}_s} $, le courant int\'egration sur le polycercle $\mathbf{S}_s $.\\
\end{enumerate}

D'apr\`es la proposition $\eqref{formule}$, afin de calculer
$h_{\overline{\mc}_\infty}(\X_{\A,\beta}) $, il suffit de d\'eterminer
\[\Bigl(dd^c\log\max(|\beta\cdot t^{a'}|)\Bigr)^{d+1} \] C'est l'objet du th\'eor\`eme ci-dessous.\\

On consid\`ere sur $\T^{d+1}(\C)$ le courant suivant
\[
\omega_\beta:=dd^c \log\max\bigl(|\beta\cdot t^{a'}| \bigr)
\]
o\`u $|\beta\cdot t^{a'}|$ est par d\'efinition le vecteur 
$\bigl(1,|\beta_1t^{a_1'}|,\ldots,|\beta_n
t^{a'_n}|\bigr) $ pour tout $t\in \T^{d+1}(\C)$. Remarquons que
\[
\omega_\beta=(\ast_{\A',\beta})^{\ast}\bigl(c_1(\overline{\mathcal{O}(1)}_\infty)_{|_{X_{\A,\beta}
}}
\bigr),
\]
rappelons que $\ast_{\A',\beta}$ est le morphisme de 
$\T^{d+1}(\C)$ vers $\p^n$ introduit au d\'ebut de l'article en 
rempla\c{c}ant $\overline{\Q}$ par $\C$.

\begin{theorem}\label{courant}Soit $\A'$ comme avant. Sur $\T^{d+1}(\C)$, on a l'\'egalit\'e de courants suivante:
\[
                                              \omega_{\beta}^{d+1}=\sum_{s\in S}
\deg(X_{\A'_{I_s},1})
\delta_{{}_{\mathbf{S}_s}}.
                                             \]
\end{theorem}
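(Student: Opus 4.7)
L'approche consiste à exploiter l'invariance du courant $\omega_\beta$ sous l'action du tore réel compact pour le ramener à la mesure de Monge-Ampère réelle d'une fonction tropicale sur $\R^{d+1}$, puis à identifier les masses obtenues avec les degrés géométriques des variétés toriques $X_{\A'_{I_s},1}$.

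Première étape: la fonction $\log\max(|\beta\cdot t^{a'}|)$ ne dépend que des modules $|t_1|,\ldots,|t_{d+1}|$ et s'écrit par conséquent $F\circ\mathrm{Log}$, où
\[
F(u)=\max_{0\leq k\leq n}L_k(u),\qquad L_k(u):=\langle a'_k,u\rangle+\log|\beta_k|,
\]
(avec la convention $a'_0=0$) est un polynôme tropical convexe et affine par morceaux sur $\R^{d+1}$. Par un résultat classique de pluri-potentiel torique (voir Passare-Rullgård, ou Burgos-Philippon-Sombra dans le contexte arithmétique), on dispose de l'égalité de courants
\[
\omega_\beta^{d+1}=\mathrm{Log}^{\ast}\bigl(\mathrm{MA}_\R(F)\bigr),
\]
où $\mathrm{MA}_\R(F)$ désigne la mesure de Monge-Ampère réelle de $F$, et où le pull-back transforme chaque masse $\delta_s$ en le courant $\delta_{\mathbf{S}_s}$ d'intégration sur la fibre polycerclique $\mathbf{S}_s=\mathrm{Log}^{-1}(s)$.

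Deuxième étape: on analyse $\mathrm{MA}_\R(F)$. Comme $F$ est convexe et linéaire par morceaux, sa mesure de Monge-Ampère est une somme discrète de masses de Dirac, portée par les points $s\in\R^{d+1}$ où le sous-différentiel $\partial F(s)$ est de dimension $d+1$. En un tel sommet, plusieurs formes affines $L_k$ atteignent simultanément le maximum, et les égalités $L_i(s)=L_j(s)$ définissent exactement les hyperplans $H_{i,j}$ de \eqref{hyp}. Les sommets sont donc précisément les points $s$ tels que $\bigcap_{\tau\in I_s}H_\tau=\{s\}$ pour un certain sous-ensemble $I_s$, c'est-à-dire les points de $S$ définis en \eqref{s}; en un tel point, $\partial F(s)$ est l'enveloppe convexe des vecteurs $a'_k$ atteignant le max, soit $\mathrm{Conv}(\A'_{I_s})$ d'après \eqref{xas}.

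Troisième étape: en chaque $s\in S$, la masse de $\mathrm{MA}_\R(F)$ vaut, par définition, le volume de Lebesgue de $\partial F(s)$, normalisé par $(d+1)!$ relativement au réseau $\Z^{d+1}$; on obtient ainsi la masse
\[
(d+1)!\,\mathrm{Vol}\bigl(\mathrm{Conv}(\A'_{I_s})\bigr).
\]
Par la formule de Kushnirenko-Khovanskii appliquée à la variété torique projective $X_{\A'_{I_s},1}$ au sens de Gelfand, Kapranov et Zelevinsky (de dimension $d+1$ grâce à la non-dégénérescence du sommet), cette quantité coïncide avec $\deg(X_{\A'_{I_s},1})$. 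En conjuguant avec l'identité $\mathrm{Log}^{\ast}\delta_s=\delta_{\mathbf{S}_s}$, on obtient la formule annoncée.

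L'obstacle principal se situe à la première étape: il faudra justifier soigneusement l'identité reliant $(dd^c F\circ\mathrm{Log})^{d+1}$ au pull-back de la Monge-Ampère réelle dans ce cadre non lisse (la fonction $\log\max$ n'est que continue, singulière le long du complexe polyédral des $H_{i,j}$), en invoquant une approximation par des fonctions PSH lisses et en passant à la limite pour l'opérateur de Bedford-Taylor. Un second point délicat est, à la troisième étape, le contrôle des constantes de normalisation (convention de $dd^c$, facteur $(d+1)!$, mesure de Lebesgue sur le réseau $\Z^{d+1}$) afin d'obtenir l'identification exacte avec le degré de la variété torique GKZ.
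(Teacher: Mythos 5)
Your argument is correct in outline, but it takes a genuinely different route from the one in the paper. The paper works locally and by hand: around each point of the non-differentiability locus it performs a monomial change of variables, approximates $\log\max$ by the smooth functions $\frac{1}{p}\log\bigl(\sum_i|f_i|^p\bigr)$ and uses Bedford--Taylor convergence to show that the Monge--Amp\`ere current vanishes wherever the minimal stratum $H_v$ has codimension $h\leq d$ (the limiting form then depends on too few variables); this confines the support to $\mathrm{Log}^{-1}(S)$, after which an invariance-under-rotation argument identifies the measure on each polycircle $\mathbf{S}_s$ as a multiple $C_s$ of Haar measure, the constant being recognised as a degree via the total mass of the localized current. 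You instead factor everything through the tropicalization $F\circ\mathrm{Log}$, invoke the correspondence between the complex Monge--Amp\`ere operator and the real Monge--Amp\`ere measure of the convex piecewise affine function $F$, compute $\mathrm{MA}_{\R}(F)$ combinatorially (Dirac masses at the vertices, weighted by the normalized volumes of the subdifferentials), and conclude by Kushnirenko. Your route is shorter and more conceptual, and it makes the combinatorial origin of the weights transparent; its cost is that the two points you flag yourself --- the validity of the complex/real Monge--Amp\`ere correspondence for the merely continuous function $\log\max$, and the normalization matching $(d+1)!\,\mathrm{Vol}\bigl(\mathrm{Conv}(\A'_{I_s})\bigr)$ with $\deg(X_{\A'_{I_s},1})$ --- carry essentially all the analytic content and must be imported from the literature, or else reproved by exactly the kind of Bedford--Taylor approximation and rotation-invariance argument that the paper carries out explicitly. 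Note finally that the identification $C_s=\deg(X_{\A'_{I_s},1})$ is the step the paper treats most briefly, so your Kushnirenko argument actually supplies a cleaner justification of it, provided you verify that $\A'_{I_s}$ generates the ambient lattice (otherwise degree and normalized volume differ by the index of the sublattice it generates).
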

Comme cons\'equence, nous disposons  d'un moyen de calcul, par 
r\'ecurrence, pour les hauteurs canoniques des sous-vari\'et\'es 
toriques $X_{\A,\beta}$ satisfaisant l'hypoth\`ese $\mathscr{A}$,  avec
 $\beta \in (F^\ast)^{n+1}$ o\`u $F$ est un corps de nombres dont
 l'anneau des entiers soit factoriel. C'est
l'objet du th\'eor\`eme ci-dessous. 
\begin{theorem}\label{laformulehot}Soit $n\in \N^\ast$ et $d\in 
\{1,\ldots,n-1\}$. Soit $\A:=\{a_1,\ldots,a_n\}$ une
sous-famille de $\Z^d$ de rang $d$ avec $L_\A\simeq \Z^d$. Soit $\beta\in 
(\overline{\Q}^\ast)^{n+1}$. On suppose que $X_{\A,\beta}$  v\'erifie l'hypoth\`ese 
$\mathscr{A}$ et que  $\beta \in (F^\ast)^{n+1}$ o\`u $F$ est un corps de nombres dont
 l'anneau des entiers soit factoriel.  On a, 
\begin{itemize}
\item Si $d\leq n-1$. On note $\A':=\A_2$ (voir \eqref{x3})  alors
il  existe $\tau\in K$
tel que
\begin{align*}
h_{\overline{\mc}_\infty}\bigl(\X_{\A,
\beta}\bigr)=&\deg(R_{1})\,h_{\overline{\mc}_\infty}\bigl(\X_{\A',
\beta}\bigr)+\deg(X_{\A',1})\log|N_K(\tau)|\\
&+\sum_{\si:K\rightarrow \C}\sum_{s_\si\in S_\si}
\deg(X_{\A'_{I_{s_\si}},1}) \int_{t\in\T^{d+1}(\C)}\log \frac{|Q_{1}(\beta
\cdot t^{a'}) |_\si}{\max(|\beta\cdot t^{a'} |_\si 
)^{\deg(R_1)} }
\delta_{\mathbf{S}_{s_\si}}.
\end{align*}
\item  Si $d=n-1$, alors il existe aussi $\tau\in K$ tel que
\begin{align*}
h_{\overline{\mc}_\infty}\bigl(\X_{\A,
\beta}\bigr)=\log|N_K(\tau)|
+\sum_{\si:K\rightarrow \C} \int_{t\in (\mathbb{S}^1)^{n}}\log |Q_{1}(t_1,\ldots,t_n
 ) |_\si\frac{dt_1\wedge \cdots dt_n}{t_1\ldots t_n}.
\end{align*}
\end{itemize}
\end{theorem}

\begin{proof}
Le th\'eor\`eme  r\'esulte des Th\'eor\`emes \eqref{formule} et  \eqref{courant}.
Les int\'egrales qui figurent dans les formules du th\'eor\`eme
seront explicit\'ees, voir Remarque \eqref{x7}.
\end{proof}

Afin de d\'emontrer le th\'eor\`eme \eqref{courant},
 on commence par \'etablir les lemmes d'alg\`ebre lin\'eaire suivants

\begin{lemma}\label{hyper} Soit $\mathbb{R}^n$ l'espace  r\'eel de dimension $n$. Soit $H$ un sous-espace affine de $\R^n$ de codimension $h$. S'il existe $\Omega $ un ensemble fini d'hyperplans affines tel que \[H=\bigcap_{\substack{{K\in \Omega}}} K,\]
alors, on peut extraire un sous ensemble $\Omega'\subset \Omega$ de cardinal $h$ tel que \[H=\bigcap_{\substack{{K\in \Omega'}}} K.\]
\end{lemma}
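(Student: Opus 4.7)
The statement is a purely linear-algebraic fact about intersections of affine hyperplanes, and the plan is to reduce it to a statement about rank of a family of linear forms.

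First, I would choose, for each $K \in \Omega$, a pair $(v_K, c_K) \in (\R^n \setminus \{0\}) \times \R$ with $K = \{x \in \R^n : \langle v_K, x\rangle = c_K\}$. Since $H = \bigcap_{K \in \Omega} K$ is a non-empty affine subspace of codimension $h$, its direction satisfies $\vec{H} = \{x \in \R^n : \langle v_K, x\rangle = 0 \text{ for all } K \in \Omega\}$, and by the rank–nullity theorem applied to the map $x \mapsto (\langle v_K, x\rangle)_{K \in \Omega}$, the linear span of $\{v_K : K \in \Omega\}$ has dimension exactly $h$.

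Next, I would extract from $\{v_K : K \in \Omega\}$ a basis $\{v_{K_1}, \ldots, v_{K_h}\}$ of this span, and set $\Omega' := \{K_1, \ldots, K_h\}$. The inclusion $H \subseteq \bigcap_{K \in \Omega'} K$ is trivial since $\Omega' \subseteq \Omega$. For the converse, I would take any $K \in \Omega$ and write $v_K = \sum_{i=1}^h \alpha_i v_{K_i}$ for some scalars $\alpha_i \in \R$. Fixing any point $x_0 \in H$ (which exists because $H \neq \emptyset$), evaluating at $x_0$ gives $c_K = \langle v_K, x_0\rangle = \sum_i \alpha_i \langle v_{K_i}, x_0\rangle = \sum_i \alpha_i c_{K_i}$. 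Then for any $y \in \bigcap_{K \in \Omega'} K$, one has $\langle v_K, y \rangle = \sum_i \alpha_i c_{K_i} = c_K$, so $y \in K$. Hence $\bigcap_{K \in \Omega'} K \subseteq \bigcap_{K \in \Omega} K = H$.

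The only subtlety — and the step I would be careful about — is the passage from the linear relation $v_K = \sum \alpha_i v_{K_i}$ to the corresponding relation between the affine constants $c_K$ and $c_{K_i}$; this is precisely where the non-emptiness of $H$ (equivalently, the consistency of the defining system) is used, and it is what makes the proof purely formal rather than requiring any finer combinatorial argument about the arrangement $\Omega$. Note that $|\Omega'| = h$ by construction.
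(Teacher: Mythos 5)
Your proof is correct, and it follows a genuinely different route from the paper's. The paper first translates by a point $x_0\in H$ to reduce to linear subspaces, then argues by induction on the codimension $h$ with a case split: either some $K_0\in\Omega$ can be singled out so that $\bigcap_{K\in\Omega\setminus\{K_0\}}K$ has codimension $h-1$ (apply the induction hypothesis and adjoin $K_0$), or else every $K_0$ is redundant and can be discarded without changing the intersection; termination comes from the fact that $h$ hyperplanes cut out codimension at most $h$. You instead argue directly: the common direction $\vec{H}$ is the kernel of $x\mapsto(\langle v_K,x\rangle)_{K\in\Omega}$, so by rank--nullity the normal vectors span an $h$-dimensional space, and extracting a basis $v_{K_1},\dots,v_{K_h}$ gives the subfamily $\Omega'$; the only delicate point, which you handle correctly, is transferring the linear relation $v_K=\sum_i\alpha_i v_{K_i}$ to the affine constants via evaluation at a point of $H$ (this is exactly where non-emptiness of $H$ enters, playing the role of the paper's translation step). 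Your argument is more direct and avoids the slightly informal greedy/induction bookkeeping of the paper's version; the paper's argument, on the other hand, needs no explicit choice of normal vectors or appeal to rank--nullity. Both are complete proofs.
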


\begin{proof}

Il existe une matrice $A$ $(h\times n)$ de rang $h$, et $b\in \R^n$, tels que
\[
H=\{x\in \R^n \; |\; Ax=b\},
\]
Pour simplifier on peut supposer que $H$  et les \'el\'ements de $\Omega$ sont des espaces vectoriels \`a l'aide de l'application suivante :
\[
\begin{split}
H&\longrightarrow \ker A\\
x&\longrightarrow x-x_0
\end{split}
\]
o\`u $x_0$ est un \'el\'ement de $H$.
D\'emontrons le lemme par r\'ecurrence sur $h$; le cas $h=1$ est \'evident, donc  supposons que le r\'esultat est vrai pour $h\geq 1$. S'il existe $K_0\in \Omega$ tel que
\begin{equation*}
\mathrm{codim}(\bigcap_{\substack{K\in\Omega-\{K_0\} }} K)=h-1,
\end{equation*}
par suite, $\bigcap_{\substack{K\in\Omega-\{K_0\} }} K$ v\'erifie l'hypoth\`ese de r\'ecurrence, et on conclut en \'ecrivant $
H=K_0\cap (\bigcap_{\substack{K\in\Omega-\{K_0\} }}K )$.\\

Si maintenant, on a $ \forall\, K_0\in \Omega$, $ \mathrm{codim}(\bigcap_{\substack{K\in\Omega-\{K_0\} }} K)=h$, donc $H=
\bigcap_{\substack{K\in\Omega-\{K_0\}}}  K$, donc soit on est au premier cas, sinon on enl\`eve des \'el\'ements de $\Omega$, mais comme la codimension de l'intersection de $h $ espaces vectoriels de codimension $1$ est au plus $h$, cela termine la preuve du lemme.
\end{proof}
\begin{lemma}\label{in}
Soit $\R^n$ l'espace vectoriel r\'eel de dimension $n$, muni du produit scalaire standard qu'on notera  $<,>$.
 Soit $q\in \{1,\ldots, n-1\} $. On consid\`ere $q$ vecteurs $b_1,\ldots,b_{q}$   de $\R^n$
et  $q$ r\'eels $c_1,\ldots,c_{q}$  et  on pose  $H_i:=\{x\in \R^n\, |\, <b_i,x>=c_i\} $, $ \forall\, i\in \{1,\ldots,q\}$. Si
\begin{equation}\label{linear}
 \emptyset\neq\cap_{i=1}^{q-1}H_{i}\subseteq H_q,
\end{equation}
alors $b_q\in \mathrm{Vect}(b_1,\ldots,b_{q-1})$, o\`u $\mathrm{Vect}(b_1,\ldots,b_{q-1})$ est le sous-espace vectoriel de $\R^n$ engendr\'e par $b_1,\ldots,b_{q-1}$.
\end{lemma}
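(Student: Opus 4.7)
The plan is to argue by contrapositive: assume $b_q \notin \mathrm{Vect}(b_1,\ldots,b_{q-1})$ and produce a point of $\cap_{i=1}^{q-1} H_i$ lying outside $H_q$. The hypothesis that the intersection is non-empty lets me convert the affine statement into a linear one by translation, after which the claim becomes the standard fact that $W^\perp \subseteq \{b_q\}^\perp$ implies $b_q \in W$ for $W = \mathrm{Vect}(b_1,\ldots,b_{q-1})$.

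Concretely, I would fix $x_0 \in \cap_{i=1}^{q-1} H_i$ (which exists by assumption) and write any $x \in \cap_{i=1}^{q-1} H_i$ as $x = x_0 + y$ with $y$ in the linear subspace
\[
V := \bigcap_{i=1}^{q-1}\{y \in \R^n \mid \langle b_i, y\rangle = 0\} = \mathrm{Vect}(b_1,\ldots,b_{q-1})^{\perp}.
\]
The hypothesis $\cap_{i=1}^{q-1} H_i \subseteq H_q$ rewrites as: for every $y \in V$, $\langle b_q, x_0 + y\rangle = c_q$. Taking $y = 0$ yields $\langle b_q, x_0\rangle = c_q$, and subtracting gives $\langle b_q, y\rangle = 0$ for all $y \in V$. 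Hence $b_q \in V^{\perp} = \mathrm{Vect}(b_1,\ldots,b_{q-1})$, where the last equality uses the general fact $(W^\perp)^\perp = W$ for a finite-dimensional subspace $W$ of $\R^n$ equipped with the standard inner product.

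There is essentially no obstacle here; the only subtlety is the need for $\cap_{i=1}^{q-1} H_i$ to be non-empty, which is exactly what allows the translation by $x_0$ and prevents a vacuous inclusion from yielding no information. Once that is in place, the proof reduces to the duality $(W^\perp)^\perp = W$, which is standard.
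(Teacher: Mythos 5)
Your proof is correct, but it takes a different route from the paper's. The paper first normalises to $c_1=\cdots=c_q=0$ and assumes $\{b_1,\ldots,b_{q-1}\}$ independent, then argues by contradiction: if $b_q\notin \mathrm{Vect}(b_1,\ldots,b_{q-1})$, the Gram matrix $(\langle b_i,b_j\rangle)_{1\leq i,j\leq q}$ is invertible, which forces $F\cap(H_1\cap\cdots\cap H_{q-1})=\{0\}$ for $F=\mathrm{Vect}(b_1,\ldots,b_q)$, and hence the impossible dimension count $\dim\bigl(F+H_1\cap\cdots\cap H_{q-1}\bigr)=q+n-(q-1)=n+1$. You instead work directly: after translating by a point $x_0$ of the (non-empty) intersection, you identify $\bigcap_{i=1}^{q-1}H_i$ as $x_0+W^{\perp}$ with $W=\mathrm{Vect}(b_1,\ldots,b_{q-1})$, deduce $\langle b_q,y\rangle=0$ for all $y\in W^{\perp}$, and conclude by the biduality $(W^{\perp})^{\perp}=W$. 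Your argument is shorter, needs no contradiction, no Gram matrix, no dimension count, and no independence assumption on the $b_i$; the paper's version makes the same use of non-emptiness but routes the final step through positive-definiteness of the Gram form rather than through orthogonal-complement duality. Both are valid; yours is the cleaner of the two. (One cosmetic remark: you announce a proof by contrapositive but then give a direct proof — the direct proof is what you should keep.)
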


\begin{proof}
Si $\cap_{i=1}^{q-1}H_{i}\neq \emptyset $, alors  on peut se ramener au cas: $c_1=\cdots=c_q=0$ et supposer  que $\{b_1,,\ldots,b_{q-1}\}$ est libre. Posons $F:=\mathrm{Vect}(b_1,\ldots,b_{q})$.

 Soit $x\in F\cap (H_1\cap\cdots\cap H_{q-1})$, donc il existe
$\lambda_1,\ldots,\lambda_{q}\in \R^n$ tels que $x=\sum_{i=1}^{q}\lambda_i b_i$. Par \eqref{linear}, $<x,b_i>=0,\, \forall\, 1\leq i\leq q$, cela donne:\[
 0=\sum_{i=1}^q \lambda_i <b_i,b_j>, \, \forall\, 1\leq j\leq q.
\]

Si $b_q\notin \mathrm{Vect}(b_1,\ldots,b_{q-1})$, alors la matrice  suivante:
 \[(<b_i,b_j>)_{1\leq i,j\leq q}\] est inversible, on d\'eduit que
$\lambda_1=\cdots=\lambda_q=0$ donc $x=0$, cela implique que  $\dim(F+H_1\cap\cdots\cap H_{q-1})=q+n-(q-1)=n+1$, ce qui est impossible.

\end{proof}

\noindent\textbf{\scshape{Notations} 2} :
\begin{enumerate}
\item $\mathbb{N}_n=\{1,\ldots,n \} $.
 \item Pour tout $z\in \T^{d+1}(\C)$,  $\textbf{J}(z):=\{i\in  \mathbb{N}_n \,|\,
|f_i(z)|=\max(|f_1(z)|,\ldots,|f_n(z)|)  \}$.
\item Pour tout $z\in \T^{d+1}(\C)$,  $ \textbf{c}(z)=\mathrm{Card}(\textbf{J}(z))$.
\item $ \textbf{J}_{d+1}=\{z\in \T^{d+1}(\C)\,|\, \textbf{c}(z)\geq d+2 \} $. \\
\end{enumerate}

On note par $\textbf{L}$  le lieu de non-diff\'erentiabilit\'e de $\max (|f_1|,\ldots, |f_n| ) $ (on suppose que $\forall\, i\neq j$, $|f_i|\neq |f_j|$), c'est \`a dire:
\[
\textbf{L}=\bigl\{z\in \T^{d+1}(\C)\,|\, \exists i\neq j\in  \mathbb{N}_n, |f_i(z)|=|f_j(z)| \,\, \text{et}\, \max(|f_1(z)|,\ldots,|f_n(z)|)=|f_i(z)| \bigr\}.
\]
Soit $y\in \textbf{L}$ et posons $v:=\mathrm{Log}(y)$. \\

L'ensemble des $H\in \mathscr{H}$  contenant $v$ ordonn\'e par l'inclusion admet un plus petit \'el\'ement. Notons le par $H_v$ et soit
$h$ sa codimension dans $\R^{d+1}$.  Par le lemme  \eqref{hyper}, il existe $h+1$ hyperplans de la forme $H_{i,j}$ (cf.\eqref{hyper} ) tels que $H_v $ soit
leur intersection, cela est  \'equivalent \`a l'existence d'un sous-ensemble de $\{f_1,\ldots,f_n\}$
de cardinal $h+1$ qu'on suppose \'egal \`a
$\{f_1,\ldots,f_{h+1}\}$
telle que:
\[
 H_v=\mathrm{Log}\bigl\{|f_1|=\cdots=|f_{h+1}|\bigr\}.
\]

Sans perte de g\'en\'eralit\'e, on peut supposer que:\[
 \textbf{J}(y)=\bigl\{1,\ldots,h+1,h+2,\ldots,\textbf{c}(y)\bigr\}.
\]
Par continuit\'e, il existe $V_y$ un voisinage ouvert de $y$, tel que $\forall\, z\in V_y$, on a $\textbf{c}(z)=\textbf{c}(y) $ et $\textbf{J}(z)=\{1,\ldots,h+1,h+2,\ldots,\textbf{c}(y)\}$.

Posons:
\[
 b_j:=a_{j+1}-a_1,\, j\geq 1.
\]

S'il existe $i_0\notin\{1,\ldots,h+1\}$, tel que $|f_{i_0}(y)|=M(y)$ (ce qui est \'equivalent \`a $\textbf{c}(y)>h+1$), alors $v\in H_{1i_0}$ et donc par d\'efinition
de $H_v$, on a:

\[
 H_v\subset H_{1i_0}.
\]

Mais par le lemme \eqref{in}
\begin{equation}\label{remarque}
 b_{i_0}\in \mathrm{Vect}(b_1,\ldots,b_{h}).
\end{equation}


Par d\'efinition de $H_v$, $\{b_1,\ldots,b_h \}$ est libre. On choisit $b'_{h+1},\ldots,b'_{d+1}  $ $d+1-h$ vecteurs de $\Z^{d+1}$  tels que la famille
$\{b_1,\ldots,b_h,b'_{h+1},\ldots,b'_{d+1}\} $ soit libre et on consid\`ere l'application suivante:
\[
\begin{matrix}
                                \Phi: \C^{d+1} & \lra  & \C^{d+1} \\
t=(t_1,\ldots,t_{d+1}) & \longmapsto  & (\frac{\beta_2}{\beta_1}t^{b_1},\ldots,\frac{\beta_{h+1}}{\beta_{1}}t^{b_{h}},t^{b'_{h+1}},\ldots,t^{b'_{d+1}}).
\end{matrix}
\]
Puisque $\{b_1,\ldots,b_h,b'_{h+1},\ldots,b'_{d+1}\} $ est libre alors  $\Phi$ est un isomorphisme. \\

On introduit donc le changement de variables suivant:
\[
 x_i=\frac{\beta_i}{\beta_1}t^{b_i}(=\frac{\beta_i t^{a_i}}{\beta_1 t^{a_1}} ), \,1\leq i\leq h\quad\text{et}\quad x_i=t^{b'_i},\, h+1\leq i\leq d+1,
\]

Rappelons que sur $V_y$
\[\begin{split}
 M(t)&=\max(|f_1(t)|,\ldots,|f_{h+1}(t)|,|f_{h+2}(t)|,\ldots,|f_{\textbf{c}(y)}(t))\\
&=\max(|\beta_1 t^{a_1}|,\ldots,|\beta_{\textbf{c}(y)}t^{a_{\textbf{c}(y)}}| ),
\end{split}
\]

Sur l'ouvert $\Phi(V_y)$, on pose:
\[
 {\omega'}_\beta^{d+1}:=(dd^c M(\Phi^{-1}))^{d+1},
\]

Par \eqref{remarque}, il existe des $\lambda_{ij} \in \Z$ avec  $h+1\leq i\leq \textbf{c}(y)$ et $h+1\leq i\leq \textbf{c}(y),\, 1\leq j\leq h $
\[
 b_i=\sum_{j=1}^h\lambda_{ij}b_j,\quad h+1\leq i\leq \textbf{c}(y)
\]
  et
$\theta_{i}\in \overline{\Q}$ tels qu'on a sur $\Phi(V_y) $:\[\begin{aligned}
{\omega'}_\beta^{d+1}&=(dd^c\log\max(1,|x_1|,\ldots,|x_{h}|,|\theta_{h+1} \prod_{j=1}^{h} x_j^{\lambda_{(h+1)j}} | ,\ldots,|\theta_{\textbf{c}(y)}
\prod_{j=1}^{h} x_j^{\lambda_{\textbf{c}(y)j}}|))^{d+1}
\end{aligned}
\]


Montrons que  $\omega^{d+1}_\beta$ est   nul sur $V_y$ si $h\leq d$: Pour tout $p\geq 2$ soit  $T_p$ le courant positif d\'efini par la forme diff\'erentielle suivante:
\[{\omega'}_{\beta,p}^{d+1}=\Bigl(dd^c\log\bigl(1+|x_1|^p+\cdots+|x_{h}|^p+|\theta_{h+1} \prod_{j=1}^{h} x_j^{\lambda_{(h+1)j}} |^p +\cdots+|\theta_{\textbf{c}(y)}
\prod_{j=1}^{h} x_j^{\lambda_{\textbf{c}(y)j}}|^p\bigr)^\frac{1}{p}\Bigr)^{d+1}\]

Par la th\'eorie de Bedford et Taylor  (voir \cite{Bedford}), la suite $(T_p)_{p\geq 2}$ converge faiblement vers $\omega^{d+1}_\beta$. Remarquons que ${\omega'}_{\beta,p}^{d+1}$ est une forme de degr\'e $(d+1,d+1)$ qui est fonction de $x_1,\ldots,x_h$ mais comme $h\leq d$ alors cette forme est nulle. On conclut que ${\omega'}_{\beta}^{d+1}$ est le courant nul.\\

\noindent Le cas qui reste est  $h=d+1$, ce dernier  corresponds \`a $H_v=\{v\}$ , donc $v\in S$. \\

Comme $S$ est fini, alors $\forall\, s\in S$, $\exists V_s$ un voisinage ouvert de $\mathbf{S}_s$ tel que $V_s\cap \mathbf{S}_{s'}=\emptyset,\, \forall\, s'\neq s
$. On a donc montr\'e que
\[
\mathrm{Supp}(\omega_\beta^{d+1 })\subseteq \mathrm{Log}^{-1}(S).
\]
Plus pr\'ecis\'ement, on a \[ \mathrm{Supp}((\omega_\beta^{d+1})_{|_{V_s}})\subseteq \mathrm{Log}^{-1}(s)=\textbf{S}_s,\;\forall\, s\in S\]

Fixant un $s\in S$, il existe  $\Omega_s$  sous ensemble   de $\{f_1,\ldots, f_{n}\}$ de cardinal $d+1$ qui repr\'esente $\omega_\beta^{d+1}$ au voisinage de $\mathbf{S}_s$: $\omega_\beta^{d+1}=(dd^c \log \max_{f\in \Omega_s}|f|)^{d+1}$, au voisinage de $\mathbf{S}_s$.
On note par $\omega_{\beta,s}^{d+1}$ son extension \`a
$\T^{d+1}$ et par $X_{\A'_s,1}$ la sous-vari\'et\'e torique de $\p^n$ d\'efinie par $\Omega_s$. La raisonnement pr\'ec\'edent permet de conclure que cette vari\'et\'e  est de dimension $d+1$ et que

\begin{equation}\label{nonnul}
 \int_{\T^{d+1}(\C)}\omega_{\beta,s}^{d+1}\neq 0.
\end{equation}
Montrons qu'au voisinage de $\textbf{S}_s$, il existe une constante $C_s$ tel que
\[
\omega_{\beta,s}^{d+1}=C_s \cdot \delta_{\textbf{S}_s}.
\]

Commen\c{c}ons par montrer  que $\mathrm{Supp}(\omega_{\beta,s}^{d+1})=\mathbf{S}_s$. Si $\textbf{S}_s \backslash \mathrm{Supp}((\omega_{\beta,s}^{d+1})_{|_{V_s}}) \neq \emptyset$, c'est un ouvert de $\textbf{S}_s$;
Soit $D $ un ouvert connexe de $\textbf{S}_s \backslash \mathrm{Supp}((\omega_\beta^{d+1})_{|_{V_s}})$, on a
\begin{equation}\label{intD}
 \int_{\textbf{S}_s} \chi_{{}_D} \omega_{\beta,s}^{d+1}=0.
\end{equation}

$\chi_D$ la fonction caract\'eristique de $D$.\\

Remarquons que $\omega_{\beta,s}^{d+1}$ est invariant par rotation, en effet si $\theta $ est une rotation de $(\mathbb{S}^1)^{d+1}\subset \T^{d+1} $, ($\theta$ est un produit de
rotation sur chaque facteur $\mathbb{S}^1$), alors on a pour tout $\rho $ une fonction $\mathcal{C}^ \infty$ \`a support compact sur $\T^{d+1}$:

\[\begin{split}
\theta_\ast [\omega_{\beta,s}^{d+1}](\rho)&=\int_{\T^{d+1}} \theta^\ast(\rho)(x) \omega_{\beta,s}^{d+1}(x) \\
&= \int_{\T^{d+1}} \rho(x) ((\theta^{-1}) ^\ast\omega_{\beta,s}(x))^{d+1}\\
&=\int_{\T^{d+1}} \rho(x) \omega_{\beta,s}^{d+1}(x)\\
&=[\omega_{\beta,s}^{d+1}](\rho),
\end{split}
\]

donc le courant $ \omega_{\beta,s}^{d+1}$ est invariant par rotation, alors  $\eqref{intD} $ implique que $\int_{\textbf{S}_s} \omega_{\beta,s}^{d+1}=0$, ce qui
est contredit \eqref{nonnul}, on d\'eduit que

\[
 \mathrm{Supp}(\omega_{\beta}^{d+1})= \textbf{S}_s,
\]
au voisinage de $\textbf{S}_s$.

\begin{lemma}
Il existe $C_s=\deg(X_{\A'_{I_s},1})$, non nul, tel que
\[
 \omega_{\beta,s}^{d+1}=C_s \cdot \delta_{\mathbf{S}_s}.
\]
\end{lemma}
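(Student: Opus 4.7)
Mon plan est d'exploiter l'invariance par rotation du courant $\omega_{\beta,s}^{d+1}$ d\'ej\`a \'etablie plus haut, afin d'identifier ce courant avec un multiple de la mesure de Haar sur le polycercle $\mathbf{S}_s$, puis de d\'eterminer la constante de proportionnalit\'e par int\'egration globale. Tout d'abord, $\mathbf{S}_s = \mathrm{Log}^{-1}(s)$ co\"incide exactement avec une orbite du groupe compact $(\mathbb{S}^1)^{d+1}$ agissant par rotation sur $\T^{d+1}(\C)$. Puisque $\omega_{\beta,s}^{d+1}$ est une mesure positive (via la th\'eorie de Bedford-Taylor), invariante sous cette action (comme v\'erifi\'e juste avant l'\'enonc\'e), et \`a support exactement \'egal \`a $\mathbf{S}_s$, l'unicit\'e \`a renormalisation pr\`es de la mesure de Haar sur le tore compact $\mathbf{S}_s$ entra\^ine imm\'ediatement l'existence d'une constante $C_s \in \R$ telle que $\omega_{\beta,s}^{d+1} = C_s \cdot \delta_{\mathbf{S}_s}$.

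Pour identifier $C_s$ avec $\deg(X_{\A'_{I_s},1})$, j'int\'egrerais les deux membres sur $\T^{d+1}(\C)$. Avec la normalisation de $\delta_{\mathbf{S}_s}$ comme probabilit\'e de Haar sur $\mathbf{S}_s$, il vient $C_s = \int_{\T^{d+1}(\C)} \omega_{\beta,s}^{d+1}$. Or, au voisinage de $\mathbf{S}_s$, le courant $\omega_{\beta,s}$ s'identifie au tir\'e en arri\`ere, par l'application monomiale $\ast_{\A'_{I_s},\beta}$ \`a valeurs dans l'espace projectif adapt\'e, de la premi\`ere classe de Chern $c_1(\overline{\mathcal{O}(1)}_\infty)$ restreinte \`a $X_{\A'_{I_s},\beta}$. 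La formule de projection, jointe \`a l'invariance par translation torique du degr\'e g\'eom\'etrique, donnerait alors l'\'egalit\'e $C_s = \deg(X_{\A'_{I_s},\beta}) = \deg(X_{\A'_{I_s},1})$; la non-nullit\'e de $C_s$ \'etant d\'ej\`a garantie par \eqref{nonnul}.

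Le point le plus d\'elicat sera la justification rigoureuse de la formule de projection, c'est-\`a-dire la v\'erification que le degr\'e du rev\^etement monomial $\ast_{\A'_{I_s},\beta}$ sur son image compense exactement la multiplicit\'e avec laquelle $X_{\A'_{I_s},1}$ appara\^it comme sous-sch\'ema de $\p^n$. Ce contr\^ole devrait se ramener, via la proposition \eqref{affine}, au cas o\`u les vecteurs de $\A'_{I_s}$ engendrent le r\'eseau ambiant $\Z^{d+1}$ (de sorte que le rev\^etement est birationnel), situation dans laquelle l'identification $C_s = \deg(X_{\A'_{I_s},1})$ r\'esulte d'un calcul de Monge-Amp\`ere sur $\T^{d+1}$ analogue \`a celui men\'e dans la d\'emonstration du th\'eor\`eme \eqref{anh}.
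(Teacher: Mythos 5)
Votre démonstration suit essentiellement la même voie que celle de l'article : positivité du courant (Bedford--Taylor), support égal à $\mathbf{S}_s$, invariance par l'action de $(\mathbb{S}^1)^{d+1}$ dont $\mathbf{S}_s$ est une orbite, puis unicité de la mesure de Haar pour conclure que $\omega_{\beta,s}^{d+1}=C_s\,\delta_{\mathbf{S}_s}$, la non-nullité provenant de \eqref{nonnul}. Vous êtes en fait plus complet que l'article sur l'identification $C_s=\deg(X_{\A'_{I_s},1})$, que le texte expédie par un simple \og remarquons que \fg{} : le calcul de la masse totale de Monge--Ampère via la formule de projection, et la vérification que le degré du revêtement monomial vaut $1$ (quitte à se ramener, par la proposition \eqref{affine}, au cas où $\A'_{I_s}$ engendre le réseau), sont précisément les points qu'il faudrait rédiger pour rendre cette étape rigoureuse.
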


Par une dilatation du polycercle $\mathbf{S}_s $ en $(\mathbb{S}^1 )^{d+1} $, il suffit de d\'emontrer le lemme ci-dessous :
\begin{lemma} Soit $\mu$ une mesure positive sur $ (\mathbb{S}^1)^p$ invariante par l'action de $(\mathbb{S}^1)^p$,
alors il existe $c$ un r\'eel tel que
\[
\mu=c\cdot \delta_{(\mathbb{S}^1)^p}.
                                                                                                 \]

\end{lemma}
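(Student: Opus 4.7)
The plan is to recognize this as the uniqueness (up to scalar) of Haar measure on the compact abelian group $(\mathbb{S}^1)^p$, and to give a short Fourier-analytic proof using the density of trigonometric polynomials. I assume here that $\delta_{(\mathbb{S}^1)^p}$ denotes the normalized Haar measure, which is the natural reading given the earlier use of $\delta_{\mathbf{S}_s}$ as an integration current.

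First I would set up characters: the continuous characters of $(\mathbb{S}^1)^p$ are the monomials $\chi_n(z)=z^n=z_1^{n_1}\cdots z_p^{n_p}$ indexed by $n\in\Z^p$, and by Stone-Weierstrass their $\C$-linear span is uniformly dense in $\mathcal{C}\bigl((\mathbb{S}^1)^p,\C\bigr)$. I would then compute the Fourier coefficients $\widehat{\mu}(n):=\int_{(\mathbb{S}^1)^p}\chi_{-n}(z)\,d\mu(z)$, which are well defined since $\mu$ is a finite positive measure on a compact space.

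The second step is to exploit the invariance hypothesis. For any $\theta\in(\mathbb{S}^1)^p$, translation-invariance gives
\[
\widehat{\mu}(n)=\int\chi_{-n}(\theta z)\,d\mu(z)=\chi_{-n}(\theta)\,\widehat{\mu}(n).
\]
Whenever $n\neq 0$, one can choose $\theta$ with $\chi_{-n}(\theta)\neq 1$, so $\widehat{\mu}(n)=0$. Hence only the zero Fourier coefficient survives, and $\widehat{\mu}(n)=c\,\widehat{\delta_{(\mathbb{S}^1)^p}}(n)$ for every $n\in\Z^p$, with $c=\mu\bigl((\mathbb{S}^1)^p\bigr)\geq 0$.

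The final step is to conclude that $\mu=c\cdot\delta_{(\mathbb{S}^1)^p}$. By density of trigonometric polynomials in the uniform topology, the signed measure $\mu-c\cdot\delta_{(\mathbb{S}^1)^p}$ annihilates a dense subspace of continuous functions, hence every continuous function, so it vanishes by the Riesz representation theorem. There is no serious obstacle: the only care needed is to make sure $\delta_{(\mathbb{S}^1)^p}$ is interpreted as the normalized Haar measure, which is consistent with how it enters the rest of the paper via $\omega_{\beta,s}^{d+1}=C_s\cdot\delta_{\mathbf{S}_s}$ with $C_s=\deg(X_{\A'_{I_s},1})$ representing the total mass.
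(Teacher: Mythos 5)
Your proof is correct and rests on the same key observation as the paper, namely that the lemma is exactly the uniqueness (up to a nonnegative scalar) of the invariant measure on the compact group $(\mathbb{S}^1)^p$; the paper simply cites the general theory of Haar measures on compact Lie groups, whereas you supply a self-contained Fourier-analytic proof of that uniqueness (vanishing of the nonzero Fourier coefficients by invariance, then density of trigonometric polynomials and Riesz representation). Your caveat about reading $\delta_{(\mathbb{S}^1)^p}$ as the (normalized) Haar measure is harmless, since the statement only asserts the existence of some constant $c$.
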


\begin{proof} C'est une cons\'equence de la th\'eorie des mesures de Haar sur les groupes de Lie
compacts.\\

Remarquons que $C_s=\deg(X_{\A'_{I_s},1}))$.

\end{proof}










On a donc terminer la preuve du th\'eor\`eme \eqref{courant}.

\begin{remarque}\label{x7}
\rm{ Soient $\A$ et $\A_2$ comme dans \eqref{x3}. On note
$\A':=\A_2$. Soit $\si:K\rightarrow \C$ une place \`a l'infini de $K$ et notons par
$S_\si$ l'ensemble associ\'ee \`a $X_{\A',\beta}$ (comme
dans Th\'eor\`eme
\eqref{courant}) pour la norme
$|\cdot|_\si$. 
   
   Soit $s_\si\in S_\si$. Par d\'efinition de l'ensemble $S_\si$ et
 par Lemme \eqref{hyper}, il existe $\{i_1\leq \ldots\leq i_{d+1} \}$ et 
 $\{j_1\leq \ldots\leq j_{d+1}\}$ 
deux  sous-ensembles de $\{0,1,\ldots,n \}$ tel que  
$\{s\}=\{u\in \R^{d+1}|\, \log |\beta_{i_1}|_\si+<a'_{i_1}, u>=
\log |\beta_{j_1}|_\si+<a'_{j_1}, u> =\cdots=
  \log |\beta_{i_{d+1}}|_\si+<a'_{i_{d+1}}, u>=\log
   |\beta_{i_{d+1}}|_\si
  +<a'_{j_{d+1}}, u>\}$\footnote{Les indices $\{i_1\leq \ldots\leq i_{d+1} \}$ et 
 $\{j_1\leq \ldots\leq j_{d+1}\}$ correspondent \`a 
 une sous matrice de $(a'_i-a'_j)_{1\leq i,j\leq n}$ de rang maximal, c-\`a-d $d+1$.}. Par un simple 
  argument d'alg\`ebre lin\'eaire,  il existe $d+1$ vecteurs de $\Q^{n+1}$ $v_{s,1},\ldots,v_{s,d+1}$, qui s'\'ecrivent en fonction 
  des $a_{i_\ast}$ et $a_{j_\ast}$, tels que
  \begin{equation}\label{x4}
  s_\si=(\log |\beta^{v_{s,1}}|_\si,\ldots,\log|\beta^{v_{s,d+1}}|_\si).
  \end{equation}
On note par $i(s)$ l'entier $i_1$. On a clairement  
  $\max(|\beta\cdot t^{a'} |_\si )=|\beta_{i(s)} 
e^{<s, 
a_{i(s)}'>}
|_\si$
pour tout   $t\in \mathbf{S}_s$.\\

  Explicitons l'int\'egrale de la 
formule du \eqref{laformulehot} lorsque $d\leq n-2$. On a 
\begin{align*}
\int_{t\in\T^{d+1}(\C)}\log \bigl|Q_1&(\beta
\cdot t^{a'}) \bigr|_\si\delta_{\mathbf{S}_{s_\si}}=\frac{1}{(2\pi)^{d+1}}\int_{t
\in \mathbf{S}_{s_\si}}\log
\bigl|(t^{a'})^{w_{+,1}}-
(t^{a'})^{w_{-,1}} \bigr|_\si\frac{dt_1\wedge\cdots\wedge dt_{d+1}}{t_1\cdots 
t_{d+1}}\\
=&\frac{1}{(2\pi)^{d+1}}\int_{t
\in \mathbf{S}_{s_\si}}\log
\bigl|(t^{a'})^{w_{-,1}} \bigr|_\si\frac{dt_1\wedge\cdots\wedge dt_{d+1}}{t_1\cdots 
t_{d+1}}+
\frac{1}{(2\pi)^{d+1}}\int_{t
\in \mathbf{S}_s}\log
\bigl|(t^{a'})^{w_{1}}-1\bigr|_\si\frac{dt_1\wedge\cdots\wedge dt_{d+1}}{t_1\cdots 
t_{d+1}}\\
=&\sum_{i=1}^n<s_\si,a'_i>w_{-,1,i}
+\frac{1}{(2\pi)^{d+1}}\int_{t
\in (\mathbb{S}^1)^{d+1}}\log
\bigl|((e^{s_\si}\cdot t)^{a'})^{w_{1}}-1
 \bigr|_\si\frac{dt_1\wedge\cdots\wedge dt_{d+1}}{t_1\cdots 
t_{d+1}}\\
=&\sum_{i=1}^n<s_\si,a'_i>w_{-,1,i}+ \sum_{i=1}^n<s_\si,a'_i>w_{1,i}+\log^+\bigl| e^{-\sum_{i=1}^n<s_\si,a'_i>w_{1,i}}\bigr|_\si\\
=&\sum_{i=1}^n<s_\si,a'_i>w_{-,1,i}+
\log^+\bigl| e^{\sum_{i=1}^n<s_\si,a'_i>w_{1,i}}\bigr|_\si.
\end{align*}
Notons qu'on a utilis\'e la formule de Jensen, avec 
$\log^+|\al|:=\max (0,\log |\al|)$ pour tout $\al \in \C$.  

Par suite,
\begin{equation}\label{x12}
\begin{split}
\int_{t\in\T^{d+1}(\C)}\log \frac{|Q_{1}(\beta
\cdot t^{a'}) |_\si}{\max(|\beta\cdot t^{a'} | )^{\deg(R_{1})} }\delta_{\mathbf{S}_{s_\si}}=&\sum_{i=1}^n<s_\si,a'_i>w_{-,1,i}+
\log^+\bigl| e^{\sum_{i=1}^n<s_\si,a'_i>w_{1,i}}\bigr|_\si\\
&-\deg(R_1)\log|\beta_{i(s)}|-\deg(R_1)<s_\si, a'_{i(s)}>.
\end{split}
\end{equation}
Notons, en particulier qu'
 il existe d'apr\`es  \eqref{x4}, un vecteur $v_{s_\si}\in \Q^n$
  tel que
le dernier terme s'\'ecrit $\log |\beta^{v_{s_\si}}
\beta_0^{-\sum_{j=1}^n 
v_{s_\si,j}}|_\si$.

On a donc,
\begin{align*}
h_{\overline{\mc}_\infty}&\bigl(\X_{\A,
\beta}\bigr)=\deg(Q_{1})\,h_{\overline{\mc}_\infty}\bigl(\X_{\A',
\beta}\bigr)+\deg(X_{\A',1})\log|N_K(\tau)|\\
&+ 
\sum_{\si:K\rightarrow \C}\sum_{s_\si\in S_\si}
\deg(X_{\A'_{I_{s_\si}},1}) \sum_{i=1}^n<s_\si,a'_i>w_{-,1,i}+\sum_{\si:K\rightarrow \C}\sum_{s_\si\in S_\si}
\deg(X_{\A'_{I_{s_\si}},1})\sum_{i=1}^n
\log^+\bigl| e^{\sum_{i=1}^n<s_\si,a'_i>w_{1,i}}
\bigr|_\si\\
&-\sum_{\si:K\rightarrow \C}\sum_{s_\si\in S_\si}
\deg(X_{\A'_{I_{s_\si}},1})
\deg(R_{1}) \log|\beta_{i(s_\si)}|_\si-\sum_{\si:K\rightarrow \C}
\sum_{s_\si\in S_\si}
\deg(X_{\A'_{I_{s_\si}},1})\deg(R_{1}) 
<s_\si, 
a_{i(s_\si)}'>,
\end{align*}
avec $\deg(X_{\A',1})=(d+1)!\mathrm{vol}_{\R^{d+1}}\mathrm{Conv}
(a'_1,\ldots,a'_{d+1})$ et 
$\deg(X_{\A'_{I_{s_\si}},1})=(d+1)!\mathrm{vol}_{\R^{d+1}}\mathrm{Conv}
(a_i|\exists j, (i,j)\in I_{s_\si})$ pour tout $s_\si\in S_\si$ et 
$\si:K\rightarrow \C.$
 
Si l'on note par $a_1^{(j)},\ldots,a_n^{(j)}$ les \'el\'ements
 de $\A_j$ pour $j=1,\ldots,n-d-1$ et $d\leq n-2$.
  Alors comme avant on a la 
  formule suivante 
{\allowdisplaybreaks
\begin{equation}\label{x18}
\begin{split}
h_{\overline{\mc}_\infty}&\bigl(\X_{\A_j,
\beta}\bigr)-\deg(R_{j})\,h_{\overline{\mc}_\infty}\bigl(\X_{\A_{j
+1},
\beta}\bigr)=\deg(X_{\A_{j+1},1})\log|N_K(\tau_j)|\\
&+ 
\sum_{\si:K\rightarrow \C}\sum_{s_\si\in S_{j,\si}}
\deg(X_{\A'_{j+1,I_{s_\si}},1}) \sum_{i=1}^n(s_\si\cdot a^{(j)}_i)w_{-,j,i}
+\sum_{\si:K\rightarrow \C}\sum_{s_\si\in S_{j,\si}}
\deg(X_{\A'_{j+1,I_{s_\si}},1})\sum_{i=1}^n
\log^+\bigl| e^{\sum_{i=1}^n(s_\si\cdot a^{(j)}_i)w_{j,i}}
\bigr|_\si\\
&-\sum_{\si:K\rightarrow \C}\sum_{s_\si\in S_{j,\si}}
\deg(X_{\A'_{j+1,I_{s_\si}},1})
\deg(R_{j}) \log|\beta_{i(s)}|_\si-\sum_{\si:K\rightarrow \C}
\sum_{s_\si\in S_{j,\si}}
\deg(X_{\A'_{j+1,I_{s_\si}},1})\deg(R_{j}) 
<s_\si, 
a_{i(s_\si)}^{(j)}>,
\end{split}
\end{equation}
}
avec $\deg(X_{\A_{j+1},1})=(d+j+1)!\mathrm{vol}_{\R^{d++j+1
}}\mathrm{Conv}
(a^{(j+1)}_1,\ldots,a^{(j+1)}_{d+1})$ et pour tout $s_\si\in S_{j+1,\si}
$,
$\deg(X_{\A'_{j+1,I_{s_\si}},1})=(d+j+1)!\mathrm{vol}_{\R^{d+j+1}}\mathrm{Conv}
(a_i^{(j+1)}|\exists k, (i,k)\in I_{s_\si})$ pour tout $\si:K\rightarrow \C$. \\

Si $d=n-1$, alors d'apr\`es ce qui pr\'ec\`ede ou par \cite[proposition 7.2.1]{Maillot}, on a
\begin{equation}\label{ee1} 
\begin{split}
h_{\overline{\mc}_\infty}\bigl(\X_{\A_{n-d},
\beta}\bigr)=&\log |N_K(\tau_1)|+ \sum_{\si:K\rightarrow \C}\int_{t\in(\mathbb{S}^1
)^{n}}\log|Q_{n-d}(
 t) |_\si \frac{dt_1\wedge\cdots dt_n}{t_1\cdots t_n}\\
 =&\log |N_K(\tau_1)|
 +
 \log|N_K(\beta^{-w_{+,n-d}})|+\sum_{\si:K\rightarrow \C}\log^+|\beta^
 {w_{n-d}}|_\si.
\end{split}
\end{equation}
}

\end{remarque}
\begin{Corollaire}\label{x6}
En gardant les m\^emes hypoth\`eses que dans \eqref{laformulehot}, alors
il existe $u_{\A}\in \N^{n-d}$ et $(v_{\A,\si,i})_{
\substack{\si:K\rightarrow \C\\
i=1,\ldots,n}}$ une sous famille de $\Q^n$ tels que
\[
h_{\overline{\mc}_\infty}\bigl(\X_{\A,
\beta}\bigr)=\sum_{i=1}^{n-d}u_{\A,i}\log|N_K(\tau_i)|+                                                                                                                                                                                                                                                                                                                                                                                                                                                                                                                                                                                                                                                                                                                                                                                                                                                                                                                                                                                                                                                                                                                                                                                                                                                                                                                                                                                                                                                                                                                                                                                                                                                                                                                                                                                                                                                                                                                                                                                                                                                                                                                                                              
\sum_{\substack{\si:K\rightarrow \C\\i=1,\ldots,n}}v_{\A,\si,i}\log
\Bigl|\frac{\beta_i}{\beta_0}\Bigr|_\si.
\]
On a,
\[
h_{\overline{\mc}_\infty}\bigl(\X_{\A,
\beta}\bigr)\in \log\bigl(\overline{\Q}\cap \R_{>0} \bigr).
\]
Donc, si $h_{\overline{\mc}_\infty}\bigl(\X_{\A, \beta}\bigr)\neq 0$,  alors c'est un nombre transcendant.
\end{Corollaire}

\begin{proof}
Ce r\'esultat est une cons\'equence de \eqref{laformulehot} et
de \eqref{x7}. Par la formule du corollaire, on voit clairement que 
$h_{\overline{\mc}_\infty}\bigl(\X_{\A, \beta}\bigr)
=\log(\gamma)$, avec $\gamma \in \overline{\Q}\cap \R_{>0}$. 
Par le th\'eor\`eme de Baker (voir par exemple \cite{Waldschmidt}), $h_{\overline{\mc}_\infty}\bigl(\X_{\A, \beta}\bigr)$ est transcendant si $h_{\overline{\mc}_\infty}\bigl(\X_{\A, \beta}\bigr)\neq 0$.

\end{proof}

\begin{example}

\begin{enumerate}
\item Soit $\beta\in (\Q^\ast)^{n+1}$
et $X_{A,\beta}$ est une hypersurface torique int\`egre de
$\p^n$. Soit $w=(w_1,\ldots,w_n)\in \Z^n$  tel que
$w\in \ker({}^t A)$ o\`u $A$ est la matrice dont les lignes
sont les \'el\'ements de $\A$. On choisit $w$ tel que 
$\gcd(w_1,\ldots,w_n)=1$. Alors la vari\'et\'e
$X_{\A,\beta}$ est d\'efinie par $\beta^{-w_+}x^w-\beta^{-w_-}$. Soient $a,b\in \Z$ avec $\gcd(a,b)=1$ tels que
$\beta^{w}=\frac{a}{b}$, (voir notation \eqref{x16}). D'apr\`es \eqref{ee1}, on a
\begin{equation}\label{112}
h_{\overline{\mathcal{O}(1)}_\infty}(\X_{\A,\beta})=\log|b \beta^{w_+}|
+\log|\beta^{-w_+}|+\log^+|\beta^w|=\log\max(|a|,|b|).
\end{equation}

\item
Soit $c=(c_1,c_2,c_3)\in (\Q^\ast)^3$. Soit
 $\A$ le singleton $(1,-1,3)$. 
On consid\`ere la vari\'ete torique associ\'ee $X_{\A,c}$, c'est une
courbe  dans $\p^3$. On consid\`ere 
  $u_1:=(-2,1,1,0)$, $u_2:=(0,2,-1,-1)$,  
  $w_1:=(1,1,0)$ et $w_2:=(2,-1,-1)$. 
 Soit $M$ la matrice suivante
\[
M=\begin{bmatrix}
-2&1&1&0\\
0&2&-1&-1
\end{bmatrix}
\]
  C'est une matrice qui v\'erifie les conditions de la d\'efinition
  \eqref{mmd}. Donc
  d'apr\`es la proposition \eqref{modeleintegre},
$R_1=c_1^{-1}c_2^{-1}T^{u_{+,1}}-T^{u_{-,1}}$ et $R_2=c_1^{-2}T^{u_{+,2}}-c_2^{-1}c_3^{-1}T^{u_{-,2}}$  d\'efinissent une vari\'et\'e torique qui satisfait D\'efinition
\eqref{condA} et le r\'esultat du Lemme \eqref{x5}. On v\'erifie que cette vari\'et\'e co\"incide avec $X_{\A,c}$. On se propose de donner
une formule pour $h_{\overline{\mathcal{O}(1)}_\infty}(\X_{\A,c})$
pour tout $c\in (\Q^\ast)^3$. En gardant les m\^emes hypoth\`eses
et notations, on a la formule suivante pour la hauteur canonique de $\X_{\A,c}$

\begin{theorem}\label{xample}
Soient $a,b\in \Q$ tels que $\frac{a}{b}=\beta^{w_1}(=c_1c_2)$ et $\gcd(a,b)=1$. 
 Si $|c_1c_2|<1$, alors 
 \[
 h_{{}_{\overline{\mathcal{O}(1)}_\infty}}(\X_{\A,c})=2\log\max(|a|,|b|)+ 2\log |\tau_2|-2\log|c_1|+2\log|c_2|+\log^+|c_1^3c_3^{-1}|+\log^+|
c_2^{-3}c_3^{-1}|
 \]
 Si $|c_1c_2|\geq 1$, alors
\[
h_{{}_{\overline{\mathcal{O}(1)}_\infty}}(\X_{\A,c})=2\log\max(|a|,|b|)+2\log
 |\tau_2|-4\log|c_1|+2\log^+|c_1^{2}c_2^{-1}c_3^{-1}|.
\] 
 En particulier, si $c_1,c_2,c_3\in \Z$ avec 
 $\gcd(c_i,c_j)=1$ pour tout $i\neq j\in \{1,2,3\}$ alors, on a
 \[
 h_{{}_{\overline{\mathcal{O}(1)}_\infty}}(\X_{\A,c})=2\log
 |c_1c_2|+\log\max(|c_1^2|,|c_2c_3|).
 \]

\end{theorem}
D\'emontrons ce r\'esulat.
On pose $A_1:=A$. Par construction, on observe que $\{(1,-1,0),(0,0,1)\}$ est une $\Z$-base 
de  $ \{x\in \Z^3|\,
<x,w_1>=0 \}$. On pose $A_2$ la matrice de taille $3\times 2$ suivante
\[
A_2=\begin{bmatrix}
1&0\\
-1&0\\
0&1
\end{bmatrix}
\]
 On note par $a'_1,a_2',a_3'$ les lignes 
de $A_2$ et on pose $a_0'=(0,0)$ et $c_0=1$ et on note
$\A_2=\{a_0',a_1',a_2',a_3'\}$. D'apr\`es le 
th\'eor\`eme \eqref{courant}, on sait que 
le courant $\bigl(dd^c\max(1,|c_1t^{a'_1}|,|c_2t^{a'_2}|,
|c_3t^{a'_3}|)\bigr)^2$ est d\'etermin\'e en termes d'un ensemble
fini de points $S$ dans $\R^2$. Rappelons que 
$s\in S$, s'il existe $i_0,i_1,i_2,i_3\in \{0,1,2,3\}$ tels que 
$|c_{i_0}(e^{s})^{a'_{i_0}}|=|c_{i_2}(e^{s})^{a'_{i_1}}|=|c_{i_2}(e^{s})^{a'_{i_2}}|=|c_{i_3}(e^{s})^{a'_{i_3}}|$ et 
$|c_j (e^s)^{a_j'}|<|c_{i_0}(e^{s})^{a_{i_0}}|$ pour tout
$j\in \{0,1,2,3\}\setminus \{i_0,i_1,i_2,i_3 \}$. 

On pose $u_i:=\log|t_i|$ et
$\al_i:=\log|c_i|$ pour $i=1,2$. On dispose de 5 situations
 \`a \'etudier:

\begin{enumerate}
 
\item Le cas $1=|c_1t_1|=|c_2 t_1^{-1}|$ et $|c_3t_2|<1$. 
Cela donne $u_1=-\al$, $u_1=\al_2$. 
Si $\al_2\neq -\al_1$ alors l'ensemble des solutions est vide. Si $\al_2=
 -\al_1$, alors l'ensemble des solutions $(u_1,u_2)$ correspond
 \`a une demi-droite. Donc, $\bigl(dd^c\max(1,|c_1t^{a'_1}|,|c_2t^{a'_2}|,
|c_3t^{a'_3}|)\bigr)^2$ est nulle au voisinage de tout point 
$(t_1,t_2)$ tel que $(u_1,u_2)$ soit proche de cette demi-droite (voir
la preuve du th\'eor\`eme \eqref{courant}).
\begin{enumerate}
\item Plus g\'en\'eralement, soient 
$i_1,i_2\in \{0,1,2,3\}$. On consid\`ere le syst\`eme suivant $|c_{i_1}t^{a_{i_1}'}|=|c_{i_2}t^{a'_{i_2}}|>|c_j t^{a_j'}|$ pour tout $j\neq i_1,i_2$. Alors, Donc comme avant que l'intersection
de $S$ avec l'ensemble des solutions de ce syst\`eme est vide. on v\'erifie que l'intersection de $S$ avec 
l'ensembe des solutions de ce syst\`eme est vide.
\end{enumerate}

\item Le cas $1=|c_1t_1|=|c_3t_2|$ et $|c_2t_1^{-1}|<1$. On a alors
$u_1=-\al_1$, $u_2=-\al_3$ et $\al_2<u_1$. Donc, si 
$\al_2\leq -\al_1$, alors on dispose d'une solution unique 
$s=(-\al_1,-\al_3)\in S$.
\item $1=|c_2t_1^{-1}|=|c_3t_2|$ et $|c_1t_1|<1$. On v\'erifie qu'on
dispose d'une unique solution $s=(\al_2,-\al_3)\in S$ si et seulement
si $\al_2\leq -\al_1$.
\item Le cas $|c_1t_1|=|c_2 t_1^{-1}|=|c_3t_2|$ et $|c_1 t_1|<1$.
On v\'erifie qu'on a une unique solution $s=(\frac{\al_2}{2}-\frac{\al_1}{2}, -\frac{\al_2}{2}+\frac{\al_1}{2}-\al_3)$ si
et seulement si $\al_2\geq -\al_1$.

\item $1=|c_1t_1|=|c_2 t_1^{-1}|=|c_3t_2|$. On a une unique solution
$s=(-\al_1,-\al_3)$ si et seulement si $-\al_1=\al_2$.
\end{enumerate}

R\'ecapitulons; On distingue 2 situations:
\begin{itemize}
\item Lorsque $\al_1+\al_2<0$ c-\`a-d $|c_1c_2|<1$, alors $S=\{s_1,s_2 \}$ avec
$s_1=(-\al_1,-\al_3)$ et $s_2=(\al_2,-\al_3)$, et
$\deg(X_{\A'_{I_{s_1}}})=\deg(X_{\A'_{I_{s_2}}})=1$ puisque
$\A'_{I_{s_1}}=\{a'_1,a_3'\}$ et $\A'_{I_{s_3}}=\{a_2',a_3'\}$.
\item $\al_1+\al_2\geq 0$ alors $S=\{s_1\}$ avec
$s_1=(\frac{\al_2}{2}-\frac{\al_1}{2},\frac{\al_1}{2}+\frac{\al_2}{2}-\al_3)$, et $\deg(X_{\A'_{I_{s_1}}})=2$ puisque
$\A'_{I_{s_1}}=\{a_2'-a'_1,a_3'-a_1'\}$.
\end{itemize}

On a $\ast_{\A_2,c}(t_1,t_2)=(1,c_1t_1,c_2t_1^{-1},c_2t_2), \forall 
t_1,t_2\in \C^\ast$ (voir les notations \eqref{tor}), donc
$R_2\bigl(\ast_{\A_2,c}(t_1,t_2)\bigr)=t_1^2-t_1^{-1}t_2$. On a alors,
\begin{align*}
\int_{{}_{\X_{\A_2,\beta}(\C)}}\log\bigl\|&R_2\bigr\|_{\infty}c_1\bigl(
\overline{\mc}_\infty\bigr)^{2}\\
&=\int_{{}_{\T^2(\C)}}\log\frac{|t_1^2-t_1^{-1}t_2
\bigr|}{\max(1,|c_1t_1|,|c_2t_1^{-1}|,|c_3t_2|)^2}
\bigl(dd^c \log \max(1,|c_1t_1|,|c_2 
t_1^{-1}|, |c_3 t_2|) \bigr)^2.
\end{align*}

Si l'on  note $s:=(r_1,r_2)\in S$, alors  par
la formule de Jensen, on a
\[
\int_{{}_{\T^2(\C)}}\log\frac{|t_1^2-t_1^{-1}t_2
\bigr|}{\max(1,|c_1t_1|,|c_2t_1^{-1}|,|c_3t_2|)^2}\delta_{
\mathbf{S}_{s}}=2\log|r_{1}|+\log^+|r_{1}^{-3}r_{2}|
-2\log \max(1,|c_1r_{1}|,|c_2r_{1}^{-1}|,|c_3r_{2}|).
\]
Donc, si $|c_1c_2|<1$ (c-\`a-d $\al_1+\al_2<0$) alors
\begin{align*}
\int_{{}_{\T^2(\C)}}&\log\frac{|t_1^2-
t_1^{-1}t_2
\bigr|}{\max(1,|c_1t_1|,|c_2t_1^{-1}|,|c_3t_2|)^2}
\bigl(dd^c \log \max(1,|c_1t_1|,|c_2 
t_1^{-1}|, |c_3 t_2|) \bigr)^2\\
&=\sum_{i=1}^{2}\deg(X_{\A'_{I_{s_i}},1}) \int_{{}_{\T^2(\C)}}\log\frac{|t_1^2-t_1^{-1}t_2
\bigr|}{\max(1,|c_1t_1|,|c_2t_1^{-1}|,|c_3t_2|)^2}\delta_{
\mathbf{S}_{s_i}}\\
&=-2\log|c_1|+\log^+|c_1^3c_3^{-1}|-2\log^+|c_1c_2|+2\log|c_2|+\log^+|
c_2^{-3}c_3^{-1}|-2\log^+|c_1c_2|\\
&=-2\log|c_1|+2\log|c_2|+\log^+|c_1^3c_3^{-1}|+\log^+|
c_2^{-3}c_3^{-1}|.
\end{align*}
D'apr\`es \eqref{112},
\[
h_{\overline{\mathcal{O}(1)}_\infty}(\X_{\A_2,c})=\log\max(|a|,|b|),
\]
avec $\frac{a}{b}=\beta^{w_1}(=c_1c_2)$ et $\gcd(a,b)=1$.
 
 Par suite,
 \begin{align*}
 h_{\overline{\mathcal{O}(1)}_\infty}(\X_{\A,c})&=2h_{\overline{\mathcal{O}(1)}_\infty}(\X_{\A_2,c})+
 2\log |\tau_2|-2\log|c_1|+2\log|c_2|+\log^+|c_1^3c_3^{-1}|+\log^+|
c_2^{-3}c_3^{-1}|\\
&=2\log\max(|a|,|b|)+ 2\log |\tau_2|-2\log|c_1|+2\log|c_2|+\log^+|c_1^3c_3^{-1}|+\log^+|
c_2^{-3}c_3^{-1}|
 \end{align*}
 
 Si $|c_1c_2|\geq 1$. On a
 \begin{align*}
\int_{{}_{\T^2(\C)}}&\log\frac{|t_1^2-
t_1^{-1}t_2
\bigr|}{\max(1,|c_1t_1|,|c_2t_1^{-1}|,|c_3t_2|)^2}
\bigl(dd^c \log \max(1,|c_1t_1|,|c_2 
t_1^{-1}|, |c_3 t_2|) \bigr)^2\\
&=\deg(X_{\A'_{I_{s_1}},1}) \int_{{}_{\T^2(\C)}}\log\frac{|t_1^2-t_1^{-1}t_2
\bigr|}{\max(1,|c_1t_1|,|c_2t_1^{-1}|,|c_3t_2|)^2}\delta_{
\mathbf{S}_{s_1}}\\
&=-2\log |c_1|+2\log|c_2|+2\log^+|c_1^{2}c_2^{-1}c_3^{-1}|-2\log^+| c_1c_2|\\
&=-4\log|c_1|+2\log^+|c_1^{2}c_2^{-1}c_3^{-1}|.
\end{align*}
 Donc, de la m\^eme mani\`ere pr\'ec\'edente on trouve que
 \[
 h_{\overline{\mathcal{O}(1)}_\infty}(\X_{\A,c})=2\log\max(|a|,|b|)+2\log
 |\tau_2|-4\log|c_1|+2\log^+|c_1^{2}c_2^{-1}c_3^{-1}|,
 \]
 o\`u $a,b\in \Q$  tels que $\frac{a}{b}=\beta^{w_1}(=c_1c_2)$ et $\gcd(a,b)=1$.
 
 \end{enumerate}

La derni\`ere formule du th\'eor\`eme se d\'eduit ais\'ement
de la deuxi\`eme formule.
\end{example}

\begin{remarque}\label{x300}
Soit $c\in \N_{\geq 2}$. On consid\`ere la matrice suivante de
taille $8\times 10$
\[
M=\begin{bmatrix}
-2 & 1 & 1&0 & 0 &0&0 &0&0&0\\
-2c & 0 & 2c-1 &1&0&0&0&0&0&0\\
-3&0&2&0&1&0&0&0&0&0\\
-2c-1&0&2c &0&0&0&1&0&0&0\\
1&0&-2&0&0&0&0&1&0&0\\
-2c+1&0&2c-2&0&0&0&0&0&1&0\\
-4c+1&0&4c-2&0&0&0&0&0&0&1\\
-1&0&0&0&0&1&0&0&0&0
\end{bmatrix}
\]
$M$ definit une sous-vari\'et\'e dans $\mathbb{P}^9$ 
(pour la construction voir \eqref{matrice}). 
On observe que  M est  mixte avec  contenu \'egal \`a 1, mais
$M$ n'est dominante (voir les d\'efinitions de la section \eqref{x100}). En effet, la sous-matrice form\'ee par les 8 premi\`eres colonnes est mixte.
On consid\`ere la vari\'et\'e $X_{\A,1}\subset \mathbb{P}^9$ donn\'ee par
$\A=\{1,-1,2c-1,2,2c,-2,2c-2,4c-2\}\subset \mathbb{Z}.$ 
Les lignes de $M$ forment  une $\mathbb{Z}$-base pour 
$\ker(1,-1,2c-1,2,2c,-2,2c-2,4c-2 )$. 
Mais $X_{\A,1}$ ne v\'erifie pas la  condition
$\mathscr{A}$. Soit $f$ le morphisme suivant, 

\[
f:\mathbb{P}^3\rightarrow \mathbb{P}^9, \,[x_0:x_1:x_2:x_3]\mapsto 
[x_ix_j, 0\leq i,j\leq 3].
\]
On v\'erifie que $X_{\A,1}$ est l'image de 
$X_{\A_0,1}\subset \mathbb{P}^3$ par
$f$, o\`u $X_{\A_0,1}$ est une  vari\'et\'e torique avec  $\A_0=\{1,-1,2c-1\}$ et que
la matrice suivante 
\begin{equation*}
\begin{bmatrix}
0 & c & 1-c
& -1 \\
-2 & 1 & 1
& 0 
\end{bmatrix}
\end{equation*}
 v\'erifie D\'efinition \eqref{mmd} et qu'elle d\'efinit  
 $X_{\A,c}$ (voir les d\'efinitions et la construction de la
  section \eqref{x100}). Donc cette vari\'et\'e satisfait l'hypoth\`ese
 $\mathscr{A}$ d'apr\`es Proposition 
 \eqref{modeleintegre}. Soit $\beta'\in (\overline{\mathbb{Q}}^\ast)^4$, et  $\beta:=f(\beta')$. Alors
on a (par \cite[Th\'eor\`eme 5.5.6]{Maillot}) \[h_{ \overline{\mathcal{O}(1)}_\infty}( X_{\A,\beta})=h_{ \overline{\mathcal{O}(1)}_\infty}(f_\ast X_{\A_0,\beta'})=
h_{f^\ast \overline{\mathcal{O}(1)}_\infty}(X_{\A_0,\beta'})=
h_{ \overline{\mathcal{O}(1)}_\infty}(X_{\A_0,\beta'}).
\]
Donc, si l'on suppose en plus que  $\beta' \in (F^\ast)^{n+1}$ o\`u $F$ est un corps de nombres dont
 l'anneau des entiers soit factoriel 
  par rapport \`a $\A_0$, alors on dispose
d'une formule pour la hauteur canonique de $ \X_{\A,\beta}$
similaire au cas \'etudi\'e pr\'ec\'edemment. 

Cette observation sugg\`ere la g\'en\'eralisation suivante:
Soient $d\leq N$ deux entiers positifs. Soit $\A=
\{a_1,\ldots,a_N\}$ un sous-ensemble de $\Z^{d}$ avec $L_{\A}\simeq\Z^{d}$ et  soit $A$ la 
matrice associ\'ee. On suppose que $A=C\cdot B$ avec
$B$ (resp. $C$) une matrice de taille $N\times n$ (resp. 
$n\times d$) \`a 
coefficients dans $\Z$.
 On note par $\mathcal{B}$ (resp. $\mathcal{C}$) 
l'ensemble form\'e par les lignes de $B$ (resp. $C$). On suppose
que $\mathcal{C}$ d\'efini un morphisme monomial de $\p^n$ 
vers $\p^N$ qu'on note par 
$f$. Soit $\beta\in \p^N(\overline{\Q})$ 
telque $\beta=f(\beta')$ avec $\beta'\in \p^n(\overline{\Q})$.
Supposons que $\mathcal{B}$ et $\beta'$ v\'erifient les conditions $\mathscr{A}$ et que  $\beta' \in (F^\ast)^{n+1}$ o\`u $F$ est un corps de nombres dont
 l'anneau des entiers soit factoriel. Alors, on dispose
d'une formule pour la hauteur canonique pour 
$\X_{\A,\beta}$ similaire au cas pr\'ec\'edent.

\end{remarque}
\section{Un r\'esultat sur les m\'etriques admissibles}

Dans cette section on va montrer un r\'esultat technique \`a savoir le th\'eor\`eme \eqref{negligeable}.
Rappelons qu'on l'a  utilis\'e dans la preuve de la proposition \eqref{formule}. \\

\begin{definition}
Soit $X$ une vari\'et\'e projective complexe de dimension $d$ et $(L,\vc)$ un fibr\'e en droites
holomorphe muni d'une
m\'etrique
hermitienne continue sur $X$. On dit que $\vc$ est admissible s'il existe $(\vc_k)_{k\in \N}$une
suite de m\'etriques hermitiennes positives de classe $\cl$, qui converge uniform\'ement vers
$\vc$ sur $L$.
\end{definition}
On d\'efinit alors le premier courant de Chern associ\'e \`a $(L,\vc)$, le courant $c_1(L,\vc)$ d\'efini
localement par:
\[
 dd^c\bigl(-\log \|s\|^2 \bigr),
\]
o\`u$s$ est une section locale holomorphe non nulle de $L$. D'apr\`es \cite{Demailly},   on peut
aussi consid\'erer les
puissances  du courant $c_1(L,\vc)$.

Il est bien connu que lorsque la m\'etrique est $\cl$ et strictement
positive, alors le support de $c_1(L,\vc)^d$ est Zariski-dense. En fait, on montre qu'il est \'egal \`a $X$. Le th\'eor\`eme \eqref{negligeable} a pour
but d'\'etablir le m\^eme r\'esultat pour les fibr\'es en droites admissibles. Commen\c{c}ons d'abord par l'exemple suivant; si on
consid\`ere les m\'etriques canoniques sur une vari\'et\'e torique lisse $X$, alors on montre dans
\cite{Maillot}, que ces derni\`eres sont admissibles et on calcule explicitement les diff\'erentes
puissances des courants de Chern associ\'es, en particulier  le support des puissances maximal est
le sous-tore compact $(\mathbb{S}^1)^d$, qui est dense pour la topologie de Zariski dans $X$. En effet, si
  $P=\sum_\nu a_\nu x^\nu$ est un polyn\^ome \`a $d$ variables, nul sur $(\mathbb{S}^1)^d$, c'est \`a dire
$\sum_{\nu}a_\nu e^{i<\nu, \theta>}=0$ pour tout $\theta\in \R^d$. Alors, par orthogonalit\'e des fonctions trigonom\'etriques, on
trouve que $a_\nu=0$.

Un autre exemple est celui d'une m\'etrique $\vc$ qui provient d'un syst\`eme dynamique sur $(X,L,f)$, alors $\vc$ est admissible,
voir \cite{Zhang}.  On montre que le support coïncide avec l'ensemble de Julia associ\'e \`a $f$,
voir \cite{dynamicsibony}, et
que ce dernier contient les points p\'eriodiques de $f$ qu'on montre qu'il est Zariski-dense dans
$X$, voir \cite[th\'eor\`eme 5.1]{Fakhruddin}. Par cons\'equent, le support de $c_1(L,\vc)^{\dim(X)}$ est Zariki-dense. Ces r\'esultats sont une
combinaison des th\'eor\`emes
ci-dessous:

Rappelons le th\'eor\`eme suivant sur la densit\'e des points p\'eriodiques d\'efinis par une dynamique alg\'ebrique:
\begin{theorem}\label{fakhruddin}
Soit $X$ une vari\'et\'e projective sur un corps alg\'ebriquement clos $k$, $\phi:X\rightarrow X$ un morphisme dominant et $L$ un fibr\'e en droites sur $X$ tels que $\phi^\ast L\otimes L$ soit ample. Alors le sous-ensemble de $X(k)$ form\'e des points p\'eriodiques de $\phi$ est Zariski-dense dans $X$.
\end{theorem}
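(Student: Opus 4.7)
The plan is to reduce to positive characteristic via spreading out, where periodic points can be counted via intersection theory on $X\times X$. First, since $X$, $\phi$ and $L$ are defined by finitely many data, I would choose a finitely generated $\Z$-subalgebra $R\subset k$ over which everything descends, yielding a relative model $\mathcal{X}\to \mathrm{Spec}(R)$ with endomorphism $\Phi$ and line bundle $\mathcal{L}$ such that $\Phi^{\ast}\mathcal{L}\otimes \mathcal{L}$ is relatively ample on a dense open $U\subset \mathrm{Spec}(R)$ (ampleness being an open condition on the base). If the periodic locus of $\phi$ were contained in a proper closed subvariety $Z\subsetneq X$, then after shrinking $U$ we can also spread $Z$ to $\mathcal{Z}\subsetneq \mathcal{X}$ over $U$. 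It then suffices to derive a contradiction after specializing at a closed point $\mathfrak{p}\in U$ with residue field $\mathbb{F}_{q}$, reducing the problem to a projective variety over $\overline{\mathbb{F}_{q}}$.

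Over $\overline{\mathbb{F}_{q}}$, every $\mathbb{F}_{q^{n}}$-rational point of $X$ is automatically preperiodic for $\phi$, and among them many are genuinely periodic: for each integer $N\geq 1$, the fixed-point scheme $\mathrm{Fix}(\phi^{N})$ is the scheme-theoretic intersection of the graph $\Gamma_{\phi^{N}}\subset X\times X$ with the diagonal $\Delta$. I would compute its length, or more precisely its degree against a fixed ample class, via the intersection product on $X\times X$. The key observation is that the restriction of $p_{1}^{\ast}L\otimes p_{2}^{\ast}L$ to $\Gamma_{\phi^{N}}\cong X$ is the line bundle $L\otimes \phi^{N\ast}L$, and the ampleness of $\phi^{\ast}L\otimes L$ gives, by Kleiman's criterion and a telescoping argument, a lower bound for the self-intersection $c_{1}(L\otimes \phi^{N\ast}L)^{\dim X}$ that grows without bound in $N$.

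To promote this counting into Zariski density, I would compare $\#\mathrm{Fix}(\phi^{N})$ on $X$ with the corresponding quantity on the proper closed subscheme $\mathcal{Z}\subsetneq X$: an intersection-theoretic computation, together with B\'ezout-type bounds on the degree of $\mathcal{Z}$ relative to the ample class, shows that the count on $X$ eventually exceeds what $\mathcal{Z}$ can absorb, forcing the existence of periodic points outside $\mathcal{Z}$ and contradicting the assumption. Inductively, one may assume $\Phi(\mathcal{Z})\subset \mathcal{Z}$ (replacing $\mathcal{Z}$ by the Zariski closure of the periodic locus), so that the restriction $\Phi|_{\mathcal{Z}}$ inherits the polarization condition $(\Phi^{\ast}L\otimes L)|_{\mathcal{Z}}$ ample, and the induction on $\dim X$ (base case $\dim X=0$ being trivial) closes.

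The main obstacle is precisely this quantitative comparison: turning the global growth of $\#\mathrm{Fix}(\phi^{N})$ into a strict inequality versus the count achievable on any proper $\Phi$-invariant closed subvariety. Making this comparison effective requires a careful use of the projection formula on $X\times X$, together with the observation that the ample class $\Phi^{\ast}\mathcal{L}\otimes \mathcal{L}$ enters the leading term of the intersection $(\Delta \cdot \Gamma_{\phi^{N}})$ against $(p_{1}^{\ast}\mathcal{L}\otimes p_{2}^{\ast}\mathcal{L})^{\dim X-1}$; lifting back from $\overline{\mathbb{F}_{q}}$ to $k$ via the spreading-out in the first step is then routine.
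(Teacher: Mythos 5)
The paper does not actually prove this statement: its ``proof'' is the single line ``Voir [Fakhruddin, th. 5.1]'', so there is no argument of the author's to compare yours against; I can only measure your sketch against Fakhruddin's actual proof, whose overall strategy (spread out, reduce to $\overline{\mathbb{F}}_p$, then produce many periodic points over finite fields by a counting argument) you have correctly identified. But the two steps you declare routine are precisely where the content lies, and as written both have genuine gaps. First, the specialization step. Knowing that $\mathrm{Per}(\phi)\subseteq Z$ on the geometric generic fibre only gives, for each fixed period $N$, a dense open $U_N$ of the base over which $\mathrm{Fix}(\phi_s^N)\subseteq Z_s$; the intersection $\bigcap_N U_N$ may contain no closed point at all (think of $\mathrm{Spec}(\Z)$ with $U_N$ the complement of the $N$-th prime). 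So you cannot simply ``specialize at a closed point'' and assume the periodic locus of the special fibre is still contained in $Z_s$ for all periods: the contradiction has to be extracted from a single period, controlled uniformly in the family, which in turn forces the counting in your second step to be quantitative. You do not address this.

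Second, the counting itself. The scheme $\Gamma_{\phi^N}\cap\Delta$ need not be zero-dimensional ($\phi^N$ may fix a positive-dimensional subvariety pointwise), and even when it is, a large intersection number does not produce many distinct points because the local multiplicities are uncontrolled; moreover the quantity you actually compute, $c_1(L\otimes\phi^{N\ast}L)^{\dim X}$, is the degree of the graph $\Gamma_{\phi^N}$ against $p_1^{\ast}L\otimes p_2^{\ast}L$, not the intersection number $\Gamma_{\phi^N}\cdot\Delta$; evaluating the latter requires the class of $\Delta$ in $X\times X$, which is not available for a general, possibly singular, projective $X$. The device that makes the reduction to positive characteristic pay off, and which is absent from your sketch, is to replace $\phi^N$ by $\phi^N\circ F^m$ with $F$ the $q$-power Frobenius: the vanishing of $dF$ forces the graph of $\phi^N\circ F^m$ to meet $\Delta$ in a finite scheme, and since $F$ commutes with $\phi$ every fixed point of $\phi^N\circ F^m$ is genuinely $\phi$-periodic. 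Finally, the hypothesis as printed ($\phi^{\ast}L\otimes L$ ample) cannot be the intended one: it holds for every dominant $\phi$ once $L$ is ample (pullbacks of ample bundles are nef), yet a translation by a non-torsion point on an abelian variety has no periodic points, and for such a $\phi$ the growth of $c_1(L\otimes\phi^{N\ast}L)^{\dim X}$ that you assert simply fails. Fakhruddin's hypothesis is that $\phi^{\ast}L\otimes L^{-1}$ be ample, and the estimates you need only hold in that setting.
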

\begin{proof}
Voir \cite[th\'eor\`eme 5.1]{Fakhruddin}.
\end{proof}

Si $\vc_f$ la m\'etrique invariante par $f$, si l'on pose

\[
\mu_f=\frac{1}{\deg_L(X)}c_1(L,\vc_f)^{\dim X}
\]
Lorsque $k=\C$. Pour tout $m\geq 1$, on note par $P_m$ l'ensemble des points dans $X(\C)$ p\'eriodiques de p\'eriode $m$ compt\'es avec multiplicit\'es. Consid\'erons
\[
\delta_{P_m}=\frac{1}{\mathrm{\mathrm{Card}}(P_m)}\sum_{z\in P_m}\delta_z
\]

on a le th\'eor\`eme suivant:
\begin{theorem}
Si $X=\p^n$, la suite $\delta_{P_m}$ converges faiblement vers $\mu_f$.
\end{theorem}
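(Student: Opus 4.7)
La stratégie consiste à suivre la méthode classique de Briend--Duval et Fornæss--Sibony pour l'équidistribution des points périodiques d'un endomorphisme holomorphe de $\mathbb{P}^n$ de degré $d\geq 2$.

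Première étape, je rappellerais la construction du courant de Green $T_f$. Comme $f^{\ast}\mathcal{O}(1)\cong \mathcal{O}(1)^{\otimes d}$, la métrique invariante s'obtient comme limite $\vc_f=\lim_k \bigl(f^{k\ast}\vc_{\mathrm{FS}}\bigr)^{1/d^k}$, et le courant associé $T_f:=c_1(\mathcal{O}(1),\vc_f)$ satisfait $f^{\ast}T_f=d\cdot T_f$, de sorte que la mesure d'équilibre $\mu_f=T_f^n/\deg_L(\mathbb{P}^n)$ est $f$-invariante et de jacobien constant $d^n$.

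Deuxième étape, j'invoquerais le théorème d'équidistribution des préimages: pour tout point $z$ hors d'un certain ensemble exceptionnel algébrique, la suite de mesures $d^{-nm}(f^m)^{\ast}\delta_z$ converge faiblement vers $\mu_f$. La démonstration repose sur l'écriture $d^{-nm}(f^m)^{\ast}\delta_z-\mu_f=dd^c(u_m)$ pour une suite $u_m$ de fonctions quasi-pluri-sous-harmoniques uniformément bornées dans $L^1$, combinée à la continuité des produits d'intersection de Bedford--Taylor.

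Troisième étape, je relierais les points périodiques aux préimages. Par Bézout appliqué à l'intersection du graphe de $f^m$ avec la diagonale de $\mathbb{P}^n\times \mathbb{P}^n$, on a $\mathrm{Card}(P_m)=(d^{(n+1)m}-1)/(d^m-1)\sim d^{nm}$. La méthode de Briend--Duval montre que la proportion des points périodiques répulsifs tend vers $1$, et que ces points équidistribuent vers $\mu_f$ en construisant pour chaque orbite répulsive des branches inverses de $f^m$ qui contractent, puis en comparant la moyenne empirique à $d^{-nm}(f^m)^{\ast}\mu_f=\mu_f$.

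L'obstacle principal sera de contrôler la contribution des points périodiques non-répulsifs (attractifs, indifférents, ou de type selle) et de montrer qu'elle est négligeable devant $d^{nm}$. Ce contrôle s'obtient en majorant polynomialement en $m$ le nombre de tels points à l'aide de bornes sur les multiplicateurs, combiné au fait qu'un bassin attractif contient un nombre borné de cycles attractifs. La convergence faible $\delta_{P_m}\to \mu_f$ en découle en décomposant $\delta_{P_m}$ selon la partition répulsif/non-répulsif et en passant à la limite dans chaque morceau.
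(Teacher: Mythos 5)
The paper offers no argument for this theorem: its entire proof is the citation ``Voir \cite{BD99}'', i.e.\ it defers wholesale to the equidistribution theorem of Briend and Duval. Your proposal is therefore to be compared with the content of that reference rather than with anything in the paper, and as an outline of Briend--Duval it is broadly on target: the Green current $T_f$ with $f^{\ast}T_f=d\cdot T_f$, the equidistribution of preimages $d^{-nm}(f^m)^{\ast}\delta_z\to\mu_f$ outside an exceptional algebraic set, and the Lefschetz/B\'ezout count $\mathrm{Card}(P_m)=\bigl(d^{(n+1)m}-1\bigr)/(d^m-1)\sim d^{nm}$ are exactly the right ingredients.

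The genuine gap is in your last step. You propose to control the non-repelling periodic points by ``majorant polynomialement en $m$ le nombre de tels points \`a l'aide de bornes sur les multiplicateurs''. In dimension $n\geq 2$ no such bound is available --- there is no analogue of Fatou's bound on the number of non-repelling cycles, and bassins attractifs give no handle on indifferent or saddle points --- and the Briend--Duval proof does not proceed this way. What it actually establishes is a \emph{lower} bound: for every ball $B$ with $\mu_f(\partial B)=0$, the construction of inverse branches of $f^m$ with exponentially small diameter on a set of $\mu_f$-measure close to $1$ produces at least $(\mu_f(B)-\eps)\,d^{nm}$ repelling periodic points of period $m$ inside $B$ for $m$ large. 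Summing over a partition into such balls, the repelling points already account for $(1-o(1))\,d^{nm}$ of the total count, so the non-repelling points and all multiplicities are automatically a vanishing proportion; no direct upper bound on their number is needed. With that step corrected, your outline matches the cited proof.
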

\begin{proof}
Voir \cite{BD99}.
\end{proof}

Soit $X$ un sous-ensemble analytique de $\p^n$ de dimension $d$. Soit $\overline{L}=(L,\vc)$ un fibr\'e en droites admissible sur $\p^n$.  On a
\begin{theorem}\label{negligeable}
Soit $X$ une vari\'et\'e projective complexe  de dimension $d$. Si $\overline{L}$ est
un fibr\'e en droites admissible tel que $c_1(L)^d\neq 0$, alors le support du courant $c_1(\overline{L})^d$ est
Zariski-dense dans $X$.

Si $H$ est une hypersurface de $X$, alors
\[
\int_{ X} \rho c_1(\overline{L})^d=\int_{X\setminus H}\rho c_1(\overline{L})^d,
\]
 pour toute fonction $\rho $ continue sur $X$.
\end{theorem}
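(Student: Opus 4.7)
La strat\'egie est d'utiliser la th\'eorie de Bedford--Taylor pour les fonctions plurisousharmoniques localement born\'ees. La m\'etrique $\vc$ \'etant admissible, il existe une suite $(\vc_k)_{k\in\N}$ de m\'etriques hermitiennes positives de classe $\cl$ qui converge uniform\'ement vers $\vc$. Localement, pour une section holomorphe trivialisante $s$ de $L$, les potentiels $\phi_k:=-\log\|s\|_k^2$ sont plurisousharmoniques et de classe $\cl$, et convergent uniform\'ement vers $\phi:=-\log\|s\|^2$; la limite $\phi$ est donc plurisousharmonique born\'ee. Le produit de Monge--Amp\`ere $(dd^c\phi)^d$ est alors d\'efini au sens de Bedford--Taylor, et $(dd^c\phi_k)^d \to (dd^c\phi)^d$ faiblement; en recollant ces constructions locales on obtient une mesure de Radon positive globale $c_1(\overline{L})^d$ sur $X$, dont la masse totale est le degr\'e $\deg_L(X)$, que l'on suppose strictement positif.

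L'ingr\'edient central est le r\'esultat classique de Bedford--Taylor selon lequel la mesure de Monge--Amp\`ere d'une fonction plurisousharmonique localement born\'ee ne charge aucun sous-ensemble pluripolaire. Or tout sous-ensemble analytique propre d'une vari\'et\'e complexe est pluripolaire, gr\^ace \`a sa description locale comme lieu de z\'eros d'une fonction holomorphe non identiquement nulle et au caract\`ere plurisousharmonique de $\log|f|$. La deuxi\`eme assertion en d\'ecoule imm\'ediatement: toute hypersurface $H\subset X$ est de $c_1(\overline{L})^d$-mesure nulle, d'o\`u
\[
\int_X \rho\, c_1(\overline{L})^d = \int_{X\setminus H}\rho\, c_1(\overline{L})^d
\]
pour toute fonction continue $\rho$ sur $X$.

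Pour la premi\`ere assertion, je proc\'ederais par l'absurde: supposons que le support $\Si$ de $c_1(\overline{L})^d$ soit contenu dans un ferm\'e de Zariski propre $Y\subsetneq X$. Alors $Y$ est un sous-ensemble analytique propre, donc pluripolaire, et le r\'esultat ci-dessus donne $c_1(\overline{L})^d(Y)=0$. Mais $\Si\subseteq Y$ entra\^\i ne \'egalement $c_1(\overline{L})^d(X\setminus Y)=0$, ce qui contredit $\deg_L(X)>0$. Par suite $\Si$ n'est contenu dans aucun ferm\'e de Zariski propre, autrement dit il est Zariski-dense dans $X$. La difficult\'e principale r\'eside dans l'appel au th\'eor\`eme de Bedford--Taylor concernant l'absence de masse sur les pluripolaires, r\'esultat non trivial mais standard; il faut par ailleurs s'assurer de la stricte positivit\'e de $\deg_L(X)$, qui tient dans les situations envisag\'ees (m\'etriques canoniques sur les vari\'et\'es toriques, m\'etriques dynamiques) puisque $L$ y est ample.
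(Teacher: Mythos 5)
Votre preuve est correcte mais emprunte un chemin r\'eellement diff\'erent de celui du texte. Vous invoquez le th\'eor\`eme g\'en\'eral de Bedford--Taylor selon lequel la mesure de Monge--Amp\`ere d'une fonction plurisousharmonique localement born\'ee ne charge aucun ensemble pluripolaire, joint au fait que tout sous-ensemble analytique propre est pluripolaire; les deux assertions en d\'ecoulent aussit\^ot (la seconde directement, la premi\`ere par l'argument de masse totale). Le texte, lui, n'utilise de la th\'eorie de Bedford--Taylor que la finitude de $\int_X\log\|z_j\|_H\,c_1(\overline{L})^d$ : il se ram\`ene d'abord, via Hironaka, au cas o\`u $H$ est un diviseur \`a croisements normaux, puis montre \`a la main, avec des fonctions de troncature $\rho_{j,\eps}$, que la masse de $c_1(\overline{L})^d$ port\'ee par les voisinages tubulaires $\{|z_j|<\eps\}$ tend vers $0$, ce qui force $c_1(\overline{L})^d=0$. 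Votre approche est plus courte et \'evite la r\'esolution des singularit\'es, au prix d'un appel \`a un r\'esultat plus puissant (absence de masse sur les pluripolaires); celle du texte est plus \'el\'ementaire dans les outils invoqu\'es mais plus technique. Notez enfin que les deux arguments partagent la m\^eme faiblesse au moment de conclure la premi\`ere assertion : il faut $\int_X c_1(L)^d>0$, ce que l'hypoth\`ese $L\neq\mathcal{O}$ ne garantit pas en toute g\'en\'eralit\'e (le texte d\'eduit $L=\mathcal{O}$ de $c_1(\overline{L})^d=0$, ce qui souffre de la m\^eme objection); vous avez au moins le m\'erite de signaler ce point explicitement.
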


\begin{proof}

Par l'absurde, supposons qu'il existe une hypersurface $H$ de $X$ telle que
\[\mathrm{\mathrm{Supp}}(c_1(\overline{L})^d)\subset H.\]
Par \cite{Hironaka}, il existe  un morphisme propre $\pi:\widetilde{X}\rightarrow X$ avec
$\widetilde{X}$ est une vari\'et\'e non-singuli\`ere et $E=\pi^{-1}(H)$ un diviseur \`a croisements
normaux. On a $\pi^\ast \overline{L}$ est admissible  et
\[
 \mathrm{\mathrm{Supp}}(c_1(\pi^\ast \overline{L})^d)\subset E.
\]
V\'erifions ce dernier point, d'apr\`es \cite{Hironaka}, $\pi_{\widetilde{X}\setminus E}$ est un
isomorphisme en $\widetilde{X}\setminus E$ et $X\setminus H$, donc $
\bigl(c_1(\pi^\ast\overline{L})^d \bigr)_{|_{\widetilde{X}\setminus
E}}=\pi^\ast\bigl(c_1(\overline{L})^d \bigr)_{|_{X\setminus H}} \bigr)=0$, car
$\mathrm{\mathrm{Supp}}(c_1(\overline{L})^d)\subset H$ par hypoth\`ese. En d'autres termes, on peut supposer
que $H$ est un diviseur \`a croisements normaux. \\


On munit $\mathcal{O}(H)$ d'une m\'etrique hermitienne $\cl$, qu'on note par $\vc_H$. Soit $x\in H$, il existe un voisinage ouvert $V$  de $x$,  un syst\`eme de coordonn\'ees locales $\{z_1,\ldots,z_d\}$ centr\'e en $x$ et un entier $k$ tels que
\[
 H\cap V=\{z_1\cdots z_k=0\}=\cup_{j=1}^k\{z_j=0\}.
\]

Soit $j=1,\ldots,k$ et $\eps>0$. On consid\`ere   $H_{j,\eps}$, le sous ensemble ouvert de $V$ donn\'e par $H_{j,\eps}=\{|z_j|<\eps\}$.
Soit $\rho_{j,\eps}$ une fonction \`a valeurs dans $[0,1]$, $\cl$ sur $X$ et  \`a support dans $V$,
nulle sur $V\setminus
H_{j,2\eps}$ et \'egale
\`a 1 sur $H_{j,\eps}$.

Comme le support de $c_1(\overline{L})^d$ est inclus dans $H$, alors pour
$\eps$ assez petit, le courant $\bigl(1-\rho_{j,\eps}\bigr) c_1(\overline{L})^d$ est nul sur $X$. Et  on a,
{\allowdisplaybreaks
\begin{align*}
\int_X \log \|z_j\|_{{}_{H}} c_1(\overline{L})^d=& \int_X \Bigl(\rho_{j,\eps}(z)\log \|z_j\|_{{}_{H}}+(1-\rho_{j,\eps}(z))\log\|z_j\|_{{}_{H}}\Bigr) c_1(\overline{L})^d\\
&=\int_X \rho_{j,\eps} (z)\log\|z_j\|_{{}_{H}}c_1(\overline{L})^d\\
&=\int_{H_{j,2\eps}} \rho_{j,\eps}(z)\log \|z_j\|_{{}_{H}}c_1(\overline{L})^d\\
&\leq \log(2\eps)\int_{H_{j,2\eps}}\rho_{j,\eps}(z)c_1(\overline{L})^d+\log C\int_{H_{j,2\eps}}\rho_{j,\eps}(z)c_1(\overline{L})^d,
\end{align*}}
(car localement, il existe $C>0$ tel que $\|z_j\|_{{}_H}\leq C|z_j|$).\\

Par construction de $\rho_{j,\eps}$, on v\'erifie que pour tout $\eps>0$:
\begin{align*}
\int_{H_{j,\eps}}c_1(\overline{L})^d\leq
\int_{H_{j,2\eps}}\rho_{j,\eps}(z)c_1(\overline{L})^d\leq
\int_{H_{j,2\eps}}c_1(\overline{L})^d,
\end{align*}

on en d\'eduit que  la suite
$\Bigl(\int_{H_{j,\eps}}\rho_{j,\eps}c_1(\overline{L})^d\Bigr)_{0<\eps\ll 1} $ converge vers une limite finie $l$ lorsque $\eps$ tend vers z\'ero.  Comme
\[
 \int_X \log \|z_j\|_{{}_H} c_1\bigl(\overline{L}\bigr)^d,
\]
est finie, cela r\'esulte de la th\'eorie de  Bedford et Taylor  (voir \cite{Bedford}), on doit avoir n\'ecessairement $l=0$.
Par
cons\'equent
\[
 \int_{H_{j,\eps}}c_1(\overline{L})^d \xrightarrow[\eps\lra 0]{}0.
\]
 En posant $H_\eps=\cup_{j=1}^k H_{j,\eps}$, on a alors
\[
 \int_{H_{\eps}}c_1(\overline{L})^d \xrightarrow[\eps\lra 0]{}0.
\]
On peut trouver un recouvrement finie par des ouverts  $(V_\al)_{\al\in J}$ comme $V_\al$, et on pose comme plus
$H_{\al,\eps}$, alors $H\subset \cup_{\al\in J} H_{\al,\eps}$.

Soit $\rho$ une fonction continue positive sur $X$, on a pour tout $0<\eps\ll 1$
\begin{align*}
0\leq  \int_X\rho\, c_1(\overline{L})^d&\leq \int_{X\setminus \overline{H}_\eps}\rho\,
c_1(\overline{L})^d+\int_{\cup_{\al\in J}H_{\al,2\eps}}\rho\, c_1(\overline{L})^d\\
&\leq \int_{X\setminus \overline{H}_\eps}\rho\, c_1(\overline{L})^d+\sum_{\al\in
J}\int_{H_{\al,2\eps}}\rho\, c_1(\overline{L})^d\\
&=0+\sum_{\al\in
J}\int_{H_{\al,2\eps}}\rho\, c_1(\overline{L})^d\quad \bigl(\text{car}\;
c_1(\overline{L})^d_{|_{X\setminus \overline{H}_\eps}}=0\bigr)\\
&\leq \|\rho\|_{\sup}\sum_{\al\in J}\int_{H_{\al,2\eps}} c_1(\overline{L})^d.
\end{align*}
Ce qui donne par passage \`a la limite: $\int_X\rho\, c_1(\overline{L})^d=0$. Ce qui contredit les hypoth\`eses du 
th\'eor\`eme. 
On conclut  que si $\overline{L}$ est un fibr\'e en droites admissible 
tel que  $c_1(L)^d\neq 0$  alors le support de
$c_1(\overline{L})^d$ est Zariski-dense dans $X$.\\

Par le m\^eme raisonnement on montre que $\int_X\rho\, c_1(\overline{L})^d= \int_{X\setminus
H}\rho\, c_1(\overline{L})^d$, en effet, on a
{\allowdisplaybreaks
\begin{align*}
0\leq  \int_X\rho\, c_1(\overline{L})^d&\leq \int_{X\setminus \overline{H}_\eps}\rho\, c_1(\overline{L})^d+\int_{\cup_{\al\in J}H_{\al,2\eps}}\rho\, c_1(\overline{L})^d\\
&\leq \int_{X\setminus H}\rho\, c_1(\overline{L})^d+\sum_{\al\in
J}\int_{H_{\al,2\eps}}\rho\, c_1(\overline{L})^d\\
&\leq \int_{X\setminus H}\rho\, c_1(\overline{L})^d+\sum_{\al\in
J}\int_{H_{\al,2\eps}}\rho\, c_1(\overline{L})^d\\
&\leq \int_{X\setminus H}\rho\, c_1(\overline{L})^d+\|\rho\|_{\sup}\sum_{\al\in J}\int_{H_{\al,2\eps}} c_1(\overline{L})^d,
\end{align*}}
 ce qui donne que $\int_X\rho\, c_1(\overline{L})^d\leq \int_{X\setminus
H}\rho\, c_1(\overline{L})^d$. Par suite $\int_X\rho\, c_1(\overline{L})^d= \int_{X\setminus
H}\rho\, c_1(\overline{L})^d$ pour toute fonction continue positive $\rho$, sur $X$. Lorsque
$\rho$ est une fonction continue quelconque, 
on \'ecrit $\rho=\rho_+-\rho_-$, o\`u
$\rho_+=\max(\rho,0)$ et $\rho_-=\max(-\rho,0)$. Et on d\'eduit que
\[
 \int_X\rho\, c_1(\overline{L})^d= \int_{X\setminus
H}\rho\, c_1(\overline{L})^d.
\]

\end{proof}

\section{Appendice}\label{x100}

Dans cette section, on rappelle quelques r\'esultats donnant le lien
entre certaines propri\'et\'es combinatorials des matrices et
les propri\'et\'es alg\'ebriques d'id\'eaux binomiaux. On expliquera
apr\`es comment produire des exemples de vari\'et\'es toriques
v\'erifiant la condition $\mathscr{A}$.

 Dans 
\cite{Herzog}, on consid\`ere les alg\`ebres de la forme
$R[S]$ avec $R$ un anneau commutatif unitaire et $S$ un semigroupe
commutatif de type fini et on 
 \'etudie les conditions sous lesquelles
ces alg\`ebres ont une repr\'esentation finie, 
une intersection compl\`ete... (voir aussi \cite{Delorme}).
Dans \cite{Eisenbud}, on \'etudie les id\'eaux binomiaux dans 
l'anneau des poly\^omes de Laurent \`a coefficients dans un 
corps. Un des r\'esultats principaux de \cite{Eisenbud}  donne
une condition suffisante pour qu'un id\'eal binomial $I=(f_1,\ldots,
f_r)$, o\`u $f_1,\ldots,f_r$ sont des bin\^omes, d\'efinit
une suite r\'eguli\`ere dans cet anneau.
Cette condition suffisante
 stipule que les exposants de $f_1,\ldots,f_r$
forment un syst\`eme lin\'eairement ind\'ependant. 
Cette condition n'est plus suffisante si $I$ est un id\'eal de
 l'anneau  
des polyn\^omes \`a coefficients dans $\Z$, voir 
\cite[example 2.1]{Mixed}. Dans \cite{Mixed}, les auteurs
donnent une condition  pour que $I$ soit premier et
l'alg\`ebre $\Z[S]$ soit une intersection compl\`ete, o\`u $S$ est un semigroupe associ\'e \`a la
matrice $M$ d\'efinie par les exposants de $f_1,\ldots,f_r$. Pour cela, ils
montrent qu'il suffit que $M$ v\'erifie certaines propri\'et\'es 
combinatorials, voir \cite[theorem 2.9, Corollary 2.10]{Mixed}.

Soient $k$ et $n$ deux entiers positifs avec $k\leq n$. On note par
$\mathcal{M}(k\times (n+1),\Z)$ l'ensemble des matrices de taille
$k\times (n+1)$ \`a coefficients dans $\Z$. Soit $M\in \mathcal{M}(k
\times (n+1),\Z) $.
On dit que $M$ est \textit{mixte} si chaque ligne contient  deux
coefficients de signe diff\'erent. $M$ est dite \textit{dominante} 
si $M$ ne contient pas une sous-matrice mixte de taille $k\times k$. 
On appelle \textit{contenu} de $M$ et on le note par $\mathrm{cont}(M)$,
le pgcd des mineurs de taille $k\times k$ de $M$. 

 Soit $\Omega$ un anneau commutatif unitaire. Soit $v=(v_0,\ldots,v_n)\in 
 \Z^{n+1}$. On pose 
$v_+:=(\max(0,v_0),\ldots,\max(0,v_n))$ et $v_-:=(\max(0,-v_0),\ldots,\max(0,-v_n))$. Si $v\in \N^{n+1}$, on note par 
$T^v$ le polyn\^ome de $\Omega[T_0,\ldots,T_n]$ d\'efini comme suit
$T^v=T_0^{v_0}\cdots T_n^{v_n}$ et on pose 
$L_u:=T^{u_+}-T^{u_-}$ pour tout $u\in \Z^{n+1}$.

 On consid\`ere 
$u_1,\ldots,u_k$, $k$ vecteurs de $\Z^{n+1}$ et on pose
\begin{equation}\label{matrice}
M=\begin{bmatrix}
u_{10}&u_{11} & u_{12} & \ldots
& u_{1n} \\
u_{20}&u_{21} & u_{22} & \ldots
& u_{2,n+1} \\
\vdots & \vdots & \ddots
& \vdots \\
u_{k,0}&u_{k,1} & u_{k,2} & \ldots
& u_{k,n+1}
\end{bmatrix},
\end{equation}
 $L_i:=T^{u_{+,i}}-T^{u_{-,i}}$ pour 
 $i=1,\ldots,k$, $I_M:=(L_1,\ldots,
 L_k)$ l'id\'eal de $\Omega[T_0,\ldots,T_n]$ engendr\'e par 
 $L_1,\ldots,L_k$, 
 $I^\ast_M:=(L_u| u\in \sum_{i=1}^k\Z u_i)$ et
 $\widehat{I}_M:=(L_u| u\in \bigl(\sum_{i=1}^k \Q u_i\bigr)\cap \Z^{n+1})$. Clairement, on a
 \[
 I_M\subset I_M^\ast\subset \widehat{I}_M.
 \]

 \begin{proposition}\label{w1}
Suppose que $\Omega=\Z$.
 Soit $M\in \mathcal{M}(k\times (n+1),\Z)$ une matrice mixte, dominante telle que les lignes sont lin\'eairement ind\'ependantes. Alors
$I_M=I_M^\ast$. R\'eciproquement, si les lignes de $M$
sont lin\'eairement ind\'ependantes et $M$ est mixte alors 
le fait que $I_M=I_M^\ast$ implique que 
$M$ est dominante.
\end{proposition}
 \begin{proof}
 voir \cite[theorem 2.9]{Mixed}.
.
 \end{proof}
 
 On d\'efinit $G_M:=\Z^{n+1}/(u_1,\ldots,u_k)$ et 
 $S_M$ le semi-groupe de $G_M$ engendr\'e par $e_0,\ldots,e_n$, 
 la base standard de $\Z^{n+1}$. On note par 
 $\rho^\ast$ l'application surjective suivante,
 \[
 \rho^\ast:\N^{n+1}\twoheadrightarrow S_M,\quad v=(v_0,\ldots,v_n)\mapsto 
 \sum_{i=0}^n v_ie_i.
 \]
Cette application induit le 
morphisme surjectif d'alg\`ebres suivant
\[
\Omega[\rho^\ast]:\Omega[T_0,\ldots,T_n]\twoheadrightarrow \Omega[S_M],\quad
T_i\mapsto 1_{\rho^\ast(e_i)}.
\]
On pose $I_{S_M}:=\ker (\Omega[\rho^\ast])$. 
D'apr\`es
Herzog, on peut d\'efinir une $S_M$-graduation 
sur $\Omega[T_0,\ldots,T
_n]$ comme suit: $F\in \Omega[T_0,\ldots,T_n]$ est dit
homog\`ene  de degr\'e $s\in S_M$, si $F=\sum_{v}\al_v X^v$ 
avec
$\rho^\ast(v)=s$ pour tout $s$ tels que $\al_v\neq 0$, 
(voir \cite[p.177]{Herzog}).

On montre que 
\[
I_{S_M}=I_M^\ast.
\]
voir \cite[lemma 4.35]{Binomial}. Rappelons la preuve;   Soit
$u\in \sum_{i=1}^k\Z u_i$, on  $\Omega[\rho^\ast](L_u)
=1_{\rho^\ast(u_+)}-1_{\rho^{\ast}(u_-)}=0$, puisque
$u_+-u_-=u\in (u_1,\ldots,u_k)$, donc
$I_M^\ast\subset I_{S_M}$. R\'eciproquement, soit
$L=\sum_{i}^m\al_i X^{v_i}\in I_{S_M}$ homog\`ene de degr\'e 
$s\in S_M$, donc 
$(\sum_i \al_i) 1_s=0 $. Par suite,
\[
L=\sum_i \al_i (X^{v_i}-X^{v_m})=\sum_i \al_i X^{\inf(v_i,v_m)}
(X^{(v_i-v_m)_+}-X^{(v_i-v_m)_-})
\]
et $v_i-v_m\in \sum_{i=1}^k \Z u_i$ (car rappelons que $\rho
^\ast(v_i)=s$ pour tout $i$.). On conclut que
\[
I_{S_M}=I_M^\ast.
\]

Lorsque $\mathrm{cont}(M)=1$, alors
on montre que $G_M\simeq \Z^{n+1-k}$. En particulier, on a dans ce cas
$I_{S_M}$ est premier. En effet, on a $\Omega[S_M]\subset \Omega[G_M]
\simeq \Omega[T_0^+,T_0^-,\ldots,T_{n-k}^+,T_{n-k}^-]$ qui est int\`egre.

\begin{proposition}\label{w2}
Suppose que $\Omega=\Z$. On a
$\Z[S_M]$ est intersection compl\`ete 
si et seulement $M$ est dominante avec contenu $1$.
\end{proposition}
\begin{proof}
voir \cite[corollary 2.10]{Mixed}.
\end{proof}
\begin{proposition}
Soient $M$ une matrice mixte $k\times (n+1)$ et $\Omega$ un anneau
commutatif unitaire et int\`egre. Les assertions suivantes sont \'equivalentes:
\begin{enumerate}
\item $M$ est mixte et dominante avec $\mathrm{cont}(M)=1$.
\item $I_M\subset \Omega[T_0,\ldots,T_n]$ est premier et 
$u_1,\ldots,u_k$ sont lin\'eairement ind\'ependants. 
\item $I_M\subset K[T_0,\ldots,T_n]$ est premier d'hauteur $k$, o\`u 
$K$ 
est un corps.
\end{enumerate}  
\end{proposition}
\begin{proof}
\cite[proposition 4.47]{Binomial}.  Notons que 
l'implication $1)\implies 3)$ peut se d\'eduire des 
propositions \eqref{w1} et \eqref{w2} et en notant que 
si ces r\'esultats sont vrais pour un anneau $\Omega_0$ alors
ils restent valables pour tout anneau.

\end{proof}

Soit $M$ comme dans \eqref{matrice}, et on suppose que 
$u_{i0}=-\sum_{j=1}^nu_{ij}$ pour tout $i=1,\ldots,k$. Donc, les
polyn\^omes $L_1,\ldots,L_k$ sont homog\`enes, et $I_M$ est un id\'eal 
homog\`ene (par rapport \`a la graduation usuelle) d\'efinissant ainsi une sous-vari\'et\'e projective. Soient $K$ un corps et
$\gamma=(\gamma_0,\ldots,\gamma_n)\in (K^\ast)^{n+1}$ avec 
$\gamma_0=1$. On note par
$[\gamma]$ l'isomorphisme d'alg\`ebres suivant
\[
[\gamma]:K[T_0,\ldots,T_n]\rightarrow K[T_0,\ldots,T_n]\quad
T_i\mapsto \gamma_i T_i.
\]
On a clairement, $I_M$ est premier dans $K[T_0,\ldots,T_n]$ 
si et seulement si $[\gamma
]^\ast I_M$ l'est aussi.  

\subsection{Une application }

Dans ce paragraphe on utilise les notations de la section
\eqref{x200}. Soient $d\leq n$, $\beta\in (\overline{\Q}^\ast)^{n+1}$ avec $\beta_0=1$, $K$ et $R_1,\ldots,R_{n-d}$ comme dans 
la section \eqref{x200}. 
On ais\'ement v\'erifie que $[\beta]^\ast (R_1,\ldots,R_{n-d})=(L_1,\ldots,
L_{n-d})$ dans $K[T_0,\ldots,T_n]$. Par suite,
si
$M$ est mixte, dominante et $\mathrm{cont}(M)=1$, alors d'apr\`es ce
qui pr\'ec\`ede, l'id\'eal $(R_1,\ldots,R_{n-d})$ est premier
dans $K[T_0,\ldots,T_n]$. Cela nous motive \`a introduire la
d\'efinition suivante

\begin{definition}\label{mmd}
Soit $M$ comme dans \eqref{matrice}, avec
$u_{i0}=-\sum_{j=1}^n u_{ij}$ pour tout $i=1,\ldots,k$. On dit que 
$M$ v\'erifie l'hypoth\`ese $(\mathcal{I})$ s'il existe $(M_i)_{i=1,
\ldots,k}$ une suite 
de sous-matrices de $M$ de taille $i\times (n+1)$ respectivement, telles
que $M_i$ est une sous-matrice de $M_{i+1}$ pour tout $i=1,\ldots,k-1$ 
et $M_i$ est mixte, dominante et $\mathrm{cont}(M_i)=1$.
 
\end{definition}

\begin{remarque}
Si $M\in \mathcal{M}(k\times (n+1),\Z)$ est mixte (resp. v\'erifie $\mathrm{cont}(M)=1$) alors toute sous-matrice de $M$ de taille
$i\times (n+1)$ est mixte (resp. a un contenu \'egal \`a 1).
\end{remarque}

\begin{example}
Soient $c_1,c_2,c\in \N_{\geq 1}$, 
\[
M_0=\begin{bmatrix}
0& c_1 &1-c_1 &-1\\
-c_2-1& c_2& 1 &0,
\end{bmatrix}
\]
et

\[
M_1=\begin{bmatrix}
c-1&-c&1&0&0\\
0&-1&0&1&0\\
0&-1&0&0&1\\
\end{bmatrix}
\]
Alors $M_0$ et $M_1$ v\'erifient les hypoth\`eses de la d\'efinition 
\eqref{mmd}. On v\'erifie que $M_0$ d\'efinit la courbe $X_{\A_0,1}$
dans $\p^3$ avec $\A_0=\{1,-c_2,c_1-c_2+c_1c_2  \}$.
\end{example}

\begin{proposition}\label{modeleintegre}
Soit $M\in \mathcal{M}(n-d\times (n+1),\Z)$ qui v\'erifie l'hypoth\`ese
$(\mathcal{I})$. Soit $\beta=(1,\beta_1,\ldots,\beta_n)\in (\overline{\Q}^\ast)^{n+1}$, alors 
$X_{\A,\beta}$ v\'erifie la condition $\mathscr{A}$ (voir
D\'efinition \eqref{condA}). 
\end{proposition}

\begin{proof} Comme $M$ v\'erifie l'hypoth\`ese $(\mathcal{I})$, alors
en particulier $M$ est dominante, mixte avec $\mathrm{cont}(M)=1$, donc
l'id\'eal $(R_1,\ldots,R_{n-d})$ est premier dans $K[T_0,\ldots,T_n]$, o\`u 
$K=\Q(\beta_1,\ldots,\beta_n)$. Quitte \`a r\'eordonner
les indices, on obtient par induction que $(R_i,\ldots,R_{n-d})$ est premier pour
tout $i=1,\ldots,n-d$.
\end{proof}

\begin{remarque}
Signalons qu'on dispose d'un algorithme en temps polyn\^omial
permettant de reconna\^itre si une matrice est mixte et dominante, voir
 \cite[p. 198]{Mixed2}.
\end{remarque}

\bibliographystyle{plain}
\bibliography{biblio}

\begin{thebibliography}{10}

\bibitem{Bedford}
Eric Bedford and B.~A. Taylor.
\newblock A new capacity for plurisubharmonic functions.
\newblock {\em Acta Math.}, 149(1-2):1--40, 1982.

\bibitem{BoGS}
J.-B. Bost, H.~Gillet, and C.~Soul{\'e}.
\newblock Heights of projective varieties and positive {G}reen forms.
\newblock {\em J. Amer. Math. Soc.}, 7(4):903--1027, 1994.

\bibitem{BD99}
Jean-Yves Briend and Julien Duval.
\newblock Exposants de {L}iapounoff et distribution des points p\'eriodiques
  d'un endomorphisme de {$\bold C{\rm P}^k$}.
\newblock {\em Acta Math.}, 182(2):143--157, 1999.

\bibitem{Burgos3}
Jos{\'e}~Ignacio Burgos~Gil, Atsushi Moriwaki, Patrice Philippon, and
  Mart{\'{\i}}n Sombra.
\newblock {A}rithmetic positivity on toric varieties.
\newblock {\em arXiv.org}, arXiv:1210.7692 [math.AG], October 2012.

\bibitem{Burgos2}
Jos{\'e}~Ignacio Burgos~Gil, Patrice Philippon, and Mart{\'{\i}}n Sombra.
\newblock {A}rithmetic geometry of toric varieties. {M}etrics, measures and
  heights.
\newblock {\em arXiv.org}, arXiv:1105.5584v1 [math.AG], Mai 2011.

\bibitem{Delorme}
Charles Delorme.
\newblock Sous-mono\"\i des d'intersection compl\`ete de {$N.$}.
\newblock {\em Ann. Sci. \'Ecole Norm. Sup. (4)}, 9(1):145--154, 1976.

\bibitem{Demailly}
Jean-Pierre Demailly.
\newblock Monge-{A}mp\`ere operators, {L}elong numbers and intersection theory.
\newblock In {\em Complex analysis and geometry}, Univ. Ser. Math., pages
  115--193. Plenum, New York, 1993.

\bibitem{Eisenbud}
David Eisenbud and Bernd Sturmfels.
\newblock Binomial ideals.
\newblock {\em Duke Math. J.}, 84(1):1--45, 1996.

\bibitem{Fakhruddin}
Najmuddin Fakhruddin.
\newblock Questions on self maps of algebraic varieties.
\newblock {\em J. Ramanujan Math. Soc.}, 18(2):109--122, 2003.

\bibitem{Mixed2}
Klaus~G. Fischer, Walter Morris, and Jay Shapiro.
\newblock Mixed dominating matrices.
\newblock {\em Linear Algebra Appl.}, 270:191--214, 1998.

\bibitem{Mixed}
Klaus~G. Fischer and Jay Shapiro.
\newblock Mixed matrices and binomial ideals.
\newblock {\em J. Pure Appl. Algebra}, 113(1):39--54, 1996.

\bibitem{dynamicsibony}
John~Erik Forn{\ae}ss and Nessim Sibony.
\newblock Complex dynamics in higher dimensions.
\newblock In {\em Complex potential theory ({M}ontreal, {PQ}, 1993)}, volume
  439 of {\em NATO Adv. Sci. Inst. Ser. C Math. Phys. Sci.}, pages 131--186.
  Kluwer Acad. Publ., Dordrecht, 1994.
\newblock Notes partially written by Estela A. Gavosto.

\bibitem{GKZ}
I.~M. Gelfand, M.~M. Kapranov, and A.~V. Zelevinsky.
\newblock {\em Discriminants, resultants and multidimensional determinants}.
\newblock Modern Birkh\"auser Classics. Birkh\"auser Boston Inc., Boston, MA,
  2008.
\newblock Reprint of the 1994 edition.

\bibitem{AIT}
Henri Gillet and Christophe Soul{\'e}.
\newblock Arithmetic intersection theory.
\newblock {\em Inst. Hautes \'Etudes Sci. Publ. Math.}, 72:93--174 (1991),
  1990.

\bibitem{Herzog}
J{\"u}rgen Herzog.
\newblock Generators and relations of abelian semigroups and semigroup rings.
\newblock {\em Manuscripta Math.}, 3:175--193, 1970.

\bibitem{Hironaka}
Heisuke Hironaka.
\newblock Resolution of singularities of an algebraic variety over a field of
  characteristic zero. {I}, {II}.
\newblock {\em Ann. of Math. (2) 79 (1964), 109--203; ibid. (2)}, 79:205--326,
  1964.

\bibitem{Lang}
Serge Lang.
\newblock {\em Algebra}, volume 211 of {\em Graduate Texts in Mathematics}.
\newblock Springer-Verlag, New York, third edition, 2002.

\bibitem{Maillot}
Vincent Maillot.
\newblock {G}\'eom\'etrie d'{A}rakelov des vari\'et\'es toriques et fibr\'es en
  droites int\'egrables.
\newblock {\em M\'em. Soc. Math. Fr. (N.S.)}, 80:vi+129, 2000.

\bibitem{PS}
Patrice Philippon and Mart{\'{\i}}n Sombra.
\newblock Hauteur normalis\'ee des vari\'et\'es toriques projectives.
\newblock {\em J. Inst. Math. Jussieu}, 7(2):327--373, 2008.

\bibitem{Binomial}
Norberts R\"ohrl.
\newblock Binomial regular sequences and s-matrices.
\newblock {\em Diploma thesis}, 1998.

\bibitem{Waldschmidt}
Michel Waldschmidt.
\newblock {\em Diophantine approximation on linear algebraic groups}, volume
  326 of {\em Grundlehren der Mathematischen Wissenschaften [Fundamental
  Principles of Mathematical Sciences]}.
\newblock Springer-Verlag, Berlin, 2000.
\newblock Transcendence properties of the exponential function in several
  variables.

\bibitem{Zhang}
Shouwu Zhang.
\newblock Small points and adelic metrics.
\newblock {\em J. Algebraic Geom.}, 4(2):281--300, 1995.

\end{thebibliography}

\end{document}